\newcommand\cyr{%
\renewcommand\rmdefault{wncyr}%
\renewcommand\sfdefault{wncyss}%
\renewcommand\encodingdefault{OT2}%
\normalfont
\selectfont}
\DeclareTextFontCommand{\textcyr}{\cyr}
\newtheorem{thm}{Theorem}[section]
\newtheorem{cor}[thm]{Corollary}
\newtheorem{lem}[thm]{Lemma}
\newtheorem{prop}[thm]{Proposition}
\newtheorem{assu}[thm]{Assumption}
\theoremstyle{definition}
\newtheorem{defn}[thm]{Definition}
\newtheorem{rem}[thm]{Remark}
 \newcommand{\Q}{\mathbb{Q}}
\newcommand{\C}{\mathbb{C}}
 \numberwithin{equation}{section}
\begin{document}
\title{On the quantitative variation of congruence ideals and integral periods of modular forms}
\author{Chan-Ho Kim}
\address{(C.-H. Kim) Center for Mathematical Challenges, Korea Institute for Advanced Study, 85 Hoegiro, Dongdaemun-gu, Seoul 02455, Republic of Korea}
\email{chanho.math@gmail.com}
\author{Kazuto Ota}
\address{(K. Ota) Department of Mathematics, Osaka University, 1-1 Machikaneyama, Toyonaka, Osaka 560-0043, Japan}
\email{kazutoota@math.sci.osaka-u.ac.jp}
\date{\today}
\subjclass[2010]{11F33 (Primary); 11F67, 11R23 (Secondary)}
\keywords{modular forms, modularity lifting theorems, congruence ideals, Shimura curves, level lowering, Tamagawa exponents}
\thanks{Data Availability Statements: Data sharing not applicable to this article as no datasets were generated or analysed during the current study.}
\thanks{Statements and Declarations: The authors have no relevant financial or non-financial interests to disclose.}

\begin{abstract}
We prove the conjecture of Pollack and Weston on the quantitative analysis of the level lowering congruence \`{a} la Ribet for modular forms of higher weight.
It was formulated and studied in the context of the integral Jacquet--Langlands correspondence and anticyclotomic Iwasawa theory for modular forms of weight two and square-free level for the first time.
We use a completely different method based on the $R=\mathbb{T}$ theorem established by Diamond--Flach--Guo and Dimitrov and an explicit comparison of adjoint $L$-values.
We briefly discuss arithmetic applications of our main result at the end.
\end{abstract}

\maketitle

\setcounter{tocdepth}{1}

\section{Introduction} \label{sec:introduction}
\subsection{Overview} \label{subsec:overview}
\subsubsection{Main result}
The goal of this article is to prove Theorem \ref{thm:main_thm} below, which is the higher weight generalization of the conjecture of Pollack and Weston  \cite[Conjecture 1.4]{pw-mu}.

Let $p \geq 5$ be a prime and fix embeddings
$\iota_p : \overline{\mathbb{Q}} \hookrightarrow \overline{\mathbb{Q}}_p$ and
$\iota_\infty : \overline{\mathbb{Q}} \hookrightarrow \mathbb{C}$.
Let $f = \sum_{n \geq 1} a_n(f) q^n \in S_{k}(\Gamma_0( N ))$ be a newform and $\mathbb{Q}_f$ the Hecke field of $f$.
Let $\lambda$ be the prime of $\mathbb{Q}_f$ lying above $p$ induced from $\iota_p$.
Let $E = \mathbb{Q}_{f, \lambda}$ be the completion at $\lambda$, $\mathcal{O} = \mathcal{O}_E$, and $\mathbb{F} = \mathcal{O} /\lambda\mathcal{O}$ the residue field.
Following Deligne's construction, there exists a continuous $\lambda$-adic Galois representation arising from $f$ (and $\iota_p$)
$$\rho_f : G_{\mathbb{Q}} = \mathrm{Gal}(\overline{\mathbb{Q}}/\mathbb{Q}) \to \mathrm{Aut}_{E}(V_{f}) \simeq \mathrm{GL}_2(E)$$
which is unramified outside $Np$ and is characterized by $\mathrm{tr}(\rho_{f}(\mathrm{Frob}_\ell)) = \iota_p (a_\ell(f))$
for each prime $\ell$ not dividing $Np$ where $\mathrm{Frob}_\ell$ is a geometric Frobenius at $\ell$.

By choosing a Galois stable $\mathcal{O}$-lattice of $V_f$, the mod $\lambda^n$ representation $\rho_n := \rho_{f} \pmod{\lambda^n}$ and the residual representation $\overline{\rho} = \overline{\rho}_{f} := \rho_1$ are defined, and these are unique up to scalars if $\overline{\rho}$ is irreducible.
The Tamagawa exponent $t_f(q)$ of $f$ at a prime $q$ dividing $N$ is defined by the largest integer such that $\rho_{t_f(q)}$ is unramified at $q$ and $\rho_{t_f(q)+1}$ is ramified at $q$. See $\S$\ref{subsec:tamagawa_exponents} for more details.
\begin{thm}[Main Theorem] \label{thm:main_thm}
Decompose the level $N$ of $f$ by $N = N^+N^-$ and assume the following conditions:
\begin{enumerate}
\item $(N,p)=1$,
\item the restriction $\overline{\rho}_f \vert_{ \mathrm{Gal}(\overline{\mathbb{Q}}/ \mathbb{Q}(\sqrt{p^*}))  }$ is absolutely irreducible, where $p^* = (-1)^{\frac{p-1}{2}}p$,
\item $2 \leq k \leq p-1$,
\item  $(N^+, N^-)=1$,
\item $N^-$ is square-free, and
\item if a prime $q \equiv \pm 1 \pmod{p}$ and $q$ divides $N^-$, then $\overline{\rho}_f$ is ramified at $q$.
\end{enumerate}
Then we have
\begin{equation} \label{eqn:quantitative_level_lowering_formula_higher}
\mathrm{ord}_\lambda \eta_{f}(N) = \mathrm{ord}_\lambda \eta_{f}(N^+, N^-) +  \sum_{q \vert N^-} t_f(q) 
\end{equation}
where $\eta_f(N)$ is the congruence ideal of $f$ in $S_k(\Gamma_0(N))$
and $\eta_f(N^+, N^-)$ is the congruence ideal of $f$ in the $N^-$-new subspace $S_k(\Gamma_0(N))^{N^-}$ of $S_k(\Gamma_0(N))$
 (reviewed in $\S$\ref{subsec:modularity_lifting_congruence_ideals}). 
\end{thm}
\begin{rem}
\begin{enumerate}
\item 
 The formula (\ref{eqn:quantitative_level_lowering_formula_higher}) quantifies the level lowering congruences \cite{inv100} while Wiles' numerical criterion \cite[Theorem 2.17]{wiles} quantifies the level raising congruences (cf.~\cite[Theorem 5.3]{ddt}).
\item When $k=2$ and $N$ is square-free, (\ref{eqn:quantitative_level_lowering_formula_higher}) is first formulated in the context of anticyclotomic Iwasawa theory for modular forms, and is proved if $\overline{\rho}_f$ is ramified at at least two primes \cite{pw-mu}. 
The approach of Pollack and Weston is based on the work of Ribet--Takahashi \cite{ribet-takahashi, takahashi-jnt} on the comparison among the parametrizations by modular and Shimura curves, and it is unclear how to generalize this geometric approach when $k >2$ or $N$ is not square-free.
In the higher weight case, by using an $R=\mathbb{T}$ theorem for quaternion algebras, Chida--Hsieh \cite{chida-hsieh-p-adic-L-functions} proved (\ref{eqn:quantitative_level_lowering_formula_higher}), assuming  
$\overline{\rho}_f$ is ramified at every prime $q\mid N^-$, i.e. $\sum_{q \vert N^-} t_f(q) = 0$.
Our contribution is to weaken all these assumptions of the previous works on weight, level, and the ramification. 
\end{enumerate}
\end{rem}

\subsubsection{Our approach}
Since the comparison among various congruence ideals is closely related to the freeness of the (quaternionic) Hecke modules over the associated Hecke algebras, it seems inevitable to use the $R=\mathbb{T}$ technique to obtain the equality (\ref{eqn:quantitative_level_lowering_formula_higher}).
However, when $\sum_{q \vert N^-} t_f(q) \neq 0$, the standard $R=\mathbb{T}$ approach is not strong enough since it is difficult to write down $\eta_{f}(N^+, N^-)$ in terms of the size of a certain adjoint Selmer group. 
In particular, it is unclear how to figure out the \emph{correct} local (deformation) condition of the adjoint Selmer group at a prime $q$ dividing $N^-$ under our setting. 

Our approach consists of two steps. 
We first use standard $R=\mathbb{T}$ arguments and Galois cohomology calculations to relate a certain congruence ideal and the size of an adjoint Selmer group with  \emph{relaxed} local condition at primes dividing $N^-$.
We also compute the size of an adjoint Selmer group with ``new" local (deformation) condition at primes dividing $N^-$ by establishing a slightly refined $R=\mathbb{T}$ theorem.
Unfortunately, the comparison of the sizes of these two adjoint Selmer groups does not give us the equality (\ref{eqn:quantitative_level_lowering_formula_higher}) exactly.
We call this comparison the \emph{Selmer computation}.

In order to remove the difference between  (\ref{eqn:quantitative_level_lowering_formula_higher}) and the Selmer computation, we interpret the difference in terms of Euler factors of the adjoint $L$-values, which is the analytic aspect of congruence ideals.
We call this process the \emph{$L$-value computation}.
It is unexpected that we do not fix any quaternion algebra in the argument and we only use classical $R=\mathbb{T}$ theorems established in \cite{diamond-flach-guo, dimitrov-ihara}.

\subsubsection{Applications}
In $\S$\ref{sec:appendix}, we briefly discuss arithmetic applications of Theorem \ref{thm:main_thm}.
It includes the comparison between Hida's canonical periods and Gross periods (e.g. \cite{haining-heegner-cycles}), the $\mu$-part of the anticyclotomic main conjecture for modular forms, the relation between the periods of modular and (indefinite!) Shimura curves, and more.

\subsubsection{Related results}
After the completion of the first version of this paper, Fakhruddin--Khare--Ramakrishna released a preprint (since published as \cite{fakhruddin-khare-ramakrishna-quantitative}) in which they study the quantitative aspect of level lowering congruences in a \emph{different} context. Their work mainly  is concerned with producing optimal mod $p^n$ congruent modular forms (of weight two and square-free level), but our work is concerned with describing the difference between two congruence ideals in terms of purely local numerical invariants, Tamagawa exponents.

\subsection{Organization}
We describe the background material to give a precise description of Theorem \ref{thm:main_thm} and explain the idea of the proof of Theorem \ref{thm:main_thm} in $\S$\ref{sec:congruence_ideals_tamagawa_exponents}.
We review deformation theory of Galois representations in $\S$\ref{sec:deformation_theory}. Especially, all the local deformation conditions we use are precisely described and a presentation of the deformation ring is discussed.
We prove a refined $R=\mathbb{T}$ theorem (Theorem \ref{thm:refined_R=T}) at the end.
In $\S$\ref{sec:adjoint_Selmer}, we compute the difference of two adjoint Selmer groups with different local conditions in terms of purely local invariants by using Theorem \ref{thm:refined_R=T}. This completes the Selmer computation (Theorem \ref{thm:key_thm}).
In $\S$\ref{sec:adjoint_L-values}, we study congruence ideals in terms of adjoint $L$-values and the $L$-value computation (Proposition \ref{prop:second_correction}) is proved. Thus, we obtain a proof of Theorem \ref{thm:main_thm}.
Arithmetic applications are discussed in $\S$\ref{sec:appendix}.

\subsection*{Acknowledgement}
We would like to thank
Ming-Lun Hsieh,
Shinichi Kobayashi,
Robert Pollack,
Ken Ribet,
Ryotaro Sakamoto, and
Sho Yoshikawa for helpful discussions and suggestions.
Chan-Ho Kim is partially supported 
by a KIAS Individual Grant (SP054102) via the Center for Mathematical Challenges at Korea Institute for Advanced Study and
by the National Research Foundation of Korea(NRF) grant funded by the Korea government(MSIT) (No. 2018R1C1B6007009).
Kazuto Ota is partially supported by JSPS KAKENHI Grant Numbers JP17K14173 and JP18J01237.
We deeply thank the referees for their careful reading and extremely helpful comments.
The exposition is greatly improved and many inaccuracies are removed thanks to their comments.

\section{Congruence ideals and Tamagawa exponents} \label{sec:congruence_ideals_tamagawa_exponents}
In this section, we first introduce some notation on congruence ideals and review the $R=\mathbb{T}$ theorem.
After that we precisely state two key results (Theorems \ref{thm:key_thm} and Proposition \ref{prop:second_correction}), whose proofs are given in the following sections. 
\subsection{Modularity lifting theorem and congruence ideals} \label{subsec:modularity_lifting_congruence_ideals}
We review deformation rings and Hecke algebras and introduce two auxiliary level structures.
Although these level structures are complicated at first, each level structure is needed for the Selmer computation and the $L$-value computation, respectively. 
Then we recall the notion of congruence ideals and the $R=\mathbb{T}$ theorem, and define the $N^-$-new variant of congruence ideals.
\subsubsection{Deformation rings and Hecke algebras}
Let 
$$\overline{\rho} : G_{\mathbb{Q}} \to \mathrm{GL}_2(\mathbb{F})$$
 be an odd, absolutely irreducible, and continuous Galois representation where $\mathbb{F}$ is a finite extension of $\mathbb{F}_p$. Denote by $N(\overline{\rho})$ the conductor of $\overline{\rho}$. 
Then $\overline{\rho}$ is modular by \cite{khare-wintenberger-1} and \cite{kisin-2-adic-barsotti-tate}.

Following \cite{diamond-flach-guo, dimitrov-ihara}, we assume the following conditions throughout this article.
\begin{assu} \label{assu:working_assumptions}
\begin{enumerate}
\item[(FL)]
There exists a newform $f = \sum_{n \geq 1} a_n(f)q^n \in S_k(\Gamma_0(N) )$ such that $2 \leq k \leq p-1$, $p \nmid N$, and $\overline{\rho}_f \simeq \overline{\rho} $ where $\overline{\rho}_f$ is the residual Galois representation associated to $f$.
\item[(TW)]
The restriction $\overline{\rho} \vert_{G_{\mathbb{Q}(\sqrt{p^*})}}$ is absolutely irreducible where $p^* = (-1)^{\frac{p-1}{2}}p$.
\end{enumerate}
\end{assu}
Let $E$ be a finite totally ramified extension of $\mathrm{Frac} ( W(\mathbb{F}) )$ and write $\mathcal{O} := \mathcal{O}_E$ as before, and $E$ plays the role of the coefficient field and can be enlarged if necessary.


Following \cite[Page 717]{diamond-flach-guo} and \cite[Definition 4.6]{dimitrov-ihara}, we briefly recall the notion of $\Sigma$-ramified deformations of $\overline{\rho}$. See $\S$\ref{sec:deformation_theory} for the detailed description of local deformation conditions.

Let $\Sigma$ be a finite set of primes $\ell$ not equal to $p$. For a field $F$, $G_F$ denotes the absolute Galois group of $F$.
A deformation $\rho$ of $\overline{\rho}$ to a complete Noetherian local $\mathcal{O}$-algebra $R$ with residue field $\mathbb{F}$ is \textbf{$\Sigma$-ramified} if
 $\rho : G_{\mathbb{Q}} \to \mathrm{GL}_2(R)$ is a continuous representation such that 
\begin{itemize}
\item $\rho \vert_{G_{\mathbb{Q}_p}}$ is a low weight crystalline deformation in the sense of Definition \ref{defn:crys-low-wt},
\item  $\rho \vert_{G_{\mathbb{Q}_\ell}}$ is minimally ramified at $\ell \not\in \Sigma$ in the sense of Definition \ref{defn:minimally_ramified}, and
\item $\mathrm{det}(\rho) = \chi^{1-k}_{\mathrm{cyc}} \otimes_{\mathcal{O}} R$
\end{itemize} 
 where $\chi_{\mathrm{cyc}}$ is the $p$-adic cyclotomic character\footnote{The Hodge--Tate weight of the $p$-adic cyclotomic character is $-1$.}.
 A $\emptyset$-ramified deformation is called \textbf{minimally ramified} (in the global sense).
Denote by $\mathcal{R}_{\Sigma}$ the $\Sigma$-ramified deformation ring of $\overline{\rho}$.  


Let $S$ be a large finite set of primes containing $\Sigma$ and we define the \textbf{$\Sigma$-ramified Hecke algebra} $\mathcal{T}_{\Sigma}$ by the $\mathcal{O}$-subalgebra of $\prod_f \mathcal{O}$ generated by $\left( \iota_p ( a_\ell(f) ) \right)_{\ell \not\in S}$ where $f$ runs over newforms of fixed weight $k$ such that the associated $p$-adic Galois representation $\rho_f$ is a $\Sigma$-ramified deformation of $\overline{\rho}$. (e.g. \cite[$\S$3.3]{ddt}) Note that $\mathcal{T}_{\Sigma}$ is independent of $S$.
Following \cite[Definition 4.2]{dimitrov-ihara}, we recall
\begin{equation} \label{eqn:P_rhobar}
P_{\overline{\rho}} :=\left\lbrace \ell \textrm{ primes} :   \ell \vert N(\overline{\rho}), \ \overline{\rho}\vert_{G_{\mathbb{Q}_\ell}} \textrm{ is irreducible, }
\overline{\rho}\vert_{I_{\ell}}  \textrm{ is reducible, } \ell \equiv -1 \pmod{p}
 \right\rbrace
\end{equation}
where $I_\ell \subseteq G_{\mathbb{Q}_\ell}$ is the inertia subgroup.
\begin{rem}
Due to the argument in \cite[$\S$1.7.1]{diamond-flach-guo}, we may assume that all the newforms in this article have minimal conductor among its twist when we work with $\mathrm{ad}^0(f)$. (cf. \cite[$\S$4.1]{dimitrov-ihara}.)
\end{rem}
\subsubsection{The level structure $N_{\overline{\rho},\Sigma}$ for the Selmer computation} \label{subsubsec:level-str-Selmer}
Following \cite[(21)]{dimitrov-ihara}, we introduce the level structure depending on $\overline{\rho}$ and $\Sigma$.
For $\ell \neq p$, we put
$c(\ell) := \mathrm{ord}_{\ell} (N(\overline{\rho})  )$, $d(\ell) := \mathrm{dim}_{\mathbb{F}} \ \mathrm{H}^0(I_\ell, \overline{\rho})$,
and $c(p) = d(p) = 0$.
For $\overline{\rho}$ and $\Sigma$, we write
\begin{equation} \label{eqn:level_Sigma}
N_{\overline{\rho},\Sigma} := r \cdot \prod_{\ell \in \Sigma} \ell^{c(\ell) + d(\ell)} \cdot \prod_{\ell \not\in \Sigma} \ell^{c(\ell)}
 =r \cdot N(\overline{\rho}) \cdot \prod_{\ell \in \Sigma} \ell^{d(\ell)}
\end{equation}
where $r > 3$ is an auxiliary prime such that $r \not\equiv 1 \pmod{p}$, $\overline{\rho}$ is unramified at $r$, and 
$ \left( \mathrm{tr} \left( \overline{\rho} (\mathrm{Frob}_r) \right) \right)^2 \neq r^{k-2} (1+r)^2$ in $\mathbb{F}$.
Here, $\mathrm{Frob}_r$ is a geometric Frobenius at $r$. The prime $r$ is needed to have \emph{neat} levels.
It is known that there exist infinitely many such $r$ (e.g.~\cite[Lemma 11]{diamond-taylor-non-optimal}, \cite[Lemma 2 (when $p=3$)]{diamond-taylor-lifting}).

Enlarge $S$ enough to contain $\Sigma$, primes dividing $N(\overline{\rho})$, and $r$ as in \cite[$\S$5.3]{dimitrov-ihara}.
Let $\mathbb{T}^S = \mathcal{O}[T_\ell, S_\ell : \ell \not\in S]$ be the abstract Hecke algebra over $\mathcal{O}$ generated by
standard Hecke operators $T_\ell$ and $S_\ell$ for every prime $\ell$ not in $S$ (``$S$-anemic").
Denote by $\mathfrak{m}_{\overline{\rho}} = (\lambda, T_\ell - \mathrm{tr}(\overline{\rho} (\mathrm{Frob}_\ell) ),  S_\ell - \ell^{-1}\cdot \mathrm{det}(\overline{\rho} (\mathrm{Frob}_\ell )) : \ell \not\in S)$ the maximal ideal of $\mathbb{T}^S$ corresponding to $\overline{\rho}$.

Let $\mathcal{F}^k_p = \mathrm{Sym}^{k-2}(R^1s_*\mathbb{Z}_p)$ is the $p$-adic local system where $s$ is the map from the universal elliptic curve to the modular curve with the $\Gamma_1(N)$-level  structure over $\mathrm{Spec}(\mathbb{Z}[1/N])$ with $k\geq 2$, $N \geq 3$ following \cite[$\S$1.2.3]{diamond-flach-guo}. See also \cite[(2) in $\S$2.1]{dimitrov-ihara}.

We recall the Hecke modules and Hecke algebras following \cite[(35)]{dimitrov-ihara}.

For an integer $N \geq 1$, denote by $\mathbb{T}(N)$ the full Hecke algebra over $\mathcal{O}$ faithfully acting on the Hecke module
$\mathrm{H}^1_{\mathrm{\acute{e}t},c}(Y_{1}(N), \mathcal{F}^k_p)[\langle-\rangle - \mathbf{1}]$ where $\mathrm{H}^1_{\mathrm{\acute{e}t},c}$ means the compactly supported \'{e}tale cohomology, and $M[\langle-\rangle - \mathbf{1}]$ is the Hecke submodule of $M$ on which the diamond operator acts trivially.

Let $\mathfrak{m}_\Sigma \subseteq \mathbb{T}(N_{\overline{\rho},\Sigma})$ be the maximal ideal generated by
$\mathfrak{m}_{\overline{\rho}}$, $U_r - \alpha_r$ and, $U_q$ for $q \in \Sigma$ where $\alpha_r$ is a chosen eigenvalue of $\overline{\rho}(\mathrm{Frob}_r)$.

Denote by $\mathbb{T}(N_{\overline{\rho},\Sigma})_{\mathfrak{m}_\Sigma}$ the localization of $\mathbb{T}(N_{\overline{\rho},\Sigma})$ at $\mathfrak{m}_\Sigma$.
As in \cite[$\S$5.3]{dimitrov-ihara}, $\mathbb{T}(N_{\overline{\rho},\Sigma})_{\mathfrak{m}_\Sigma}$ can be identified with the image of $\mathbb{T}^S$ in the ring of $\mathcal{O}$-endomorphisms of 
\[
\mathcal{M}_{N_{\overline{\rho},\Sigma}} := \mathrm{H}^1_{\mathrm{\acute{e}t},c}(Y_{1}(N_{\overline{\rho},\Sigma}), \mathcal{F}^k_p)[\langle-\rangle - \mathbf{1}]_{ \mathfrak{m}_\Sigma } .
\]
See  \cite[Lemma 5.4.(iii)]{dimitrov-ihara} and \cite[Proposition 2.4.2]{epw} for details.

\begin{rem} \label{rem:eichler-shimura-isom}
Denote by $S_k(\Gamma_0(N), \mathcal{O})$ the $\mathcal{O}$-module generated by \emph{normalized} eigenforms in $S_k(\Gamma_0(N))$.
Although it is easy to observe that
$\mathcal{M}_{N_{\overline{\rho},\Sigma}}$ is isomorphic to two copies of $S_k(\Gamma_0(N_{\overline{\rho},\Sigma}), \mathcal{O})_{\mathfrak{m}_\Sigma}$ 
via the \emph{integral} Eichler--Shimura isomorphism \cite[(3) in $\S$1.2]{vatsal-cong} under Assumption \ref{assu:working_assumptions},
we do not use the $\Gamma_0(N_{\overline{\rho},\Sigma})$-level structure for $\mathcal{M}_{N_{\overline{\rho},\Sigma}}$  directly in order to avoid the issues on the moduli problem and the smoothness.
Notably, $\mathbb{T}(N_{\overline{\rho},\Sigma})_{\mathfrak{m}_\Sigma}$ acts faithfully on $S_k(\Gamma_0(N_{\overline{\rho},\Sigma}))_{\mathfrak{m}_\Sigma}$.
\end{rem}
\begin{lem} \label{lem:sigma-level-identification}
If $\Sigma$ contains $P_{\overline{\rho}}$, then
there exists a unique isomorphism of $\mathbb{T}^S$-algebras $\mathbb{T}(N_{\overline{\rho},\Sigma})_{\mathfrak{m}_\Sigma} \simeq \mathcal{T}_{\Sigma}$.
\end{lem}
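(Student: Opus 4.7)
The plan is to show that both algebras arise as the \emph{same} $\mathcal{O}$-subalgebra of a common product $\prod_g \mathcal{O}_g$ in which $g$ ranges over a common indexing set of newforms. By definition $\mathcal{T}_\Sigma$ is the $\mathcal{O}$-subalgebra of $\prod_g \mathcal{O}_g$ generated by $(\iota_p a_\ell(g))_{\ell \notin S}$ with $g$ running over weight $k$ newforms whose $\rho_g$ is a $\Sigma$-ramified deformation of $\overline{\rho}$, so it suffices to identify $\mathbb{T}(N_{\overline{\rho},\Sigma})_{\mathfrak{m}_\Sigma}$ with the same subalgebra for the same indexing set.

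First I would decompose $\mathcal{M}_{N_{\overline{\rho},\Sigma},\mathfrak{m}_\Sigma} \otimes_{\mathcal{O}} E$ into Hecke eigencomponents via newform theory: each eigensystem corresponds to a pair $(g, \epsilon)$ where $g$ is a newform of level $N(g) \mid N_{\overline{\rho},\Sigma}$ with $\overline{\rho}_g \simeq \overline{\rho}$, and $\epsilon$ records the oldform class at primes where $N(g)$ properly divides $N_{\overline{\rho},\Sigma}$, further constrained by $U_r \equiv \alpha_r \pmod{\lambda}$ and $U_q \equiv 0 \pmod{\lambda}$ for $q \in \Sigma$. The defining properties of $r$ (namely $r \not\equiv 1 \pmod{p}$ and the Frobenius trace condition) force $\overline{\rho}(\mathrm{Frob}_r)$ to have distinct eigenvalues, so these constraints pin down $\epsilon$ uniquely for each admissible $g$. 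The image of $\mathbb{T}^S$ inside $\mathrm{End}(\mathcal{M}_{N_{\overline{\rho},\Sigma},\mathfrak{m}_\Sigma})$ therefore agrees with the $\mathcal{O}$-subalgebra of $\prod_g \mathcal{O}_g$ generated by $(\iota_p a_\ell(g))_{\ell \notin S}$, where $g$ ranges over the admissible newforms.

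The heart of the proof is to show that these admissible $g$ are precisely the $\Sigma$-ramified newforms. The inclusion $\supseteq$ is a direct divisibility check against formula (\ref{eqn:level_Sigma}). For the reverse, one must prove that if $\mathrm{ord}_\ell N(g) \leq c(\ell)$ and $\overline{\rho}_g \simeq \overline{\rho}$ at $\ell \notin \Sigma \cup \{p, r\}$, then $\rho_g|_{G_{\mathbb{Q}_\ell}}$ is minimally ramified. I expect this local classification to be the principal obstacle: the failure modes---``exotic'' lifts sharing the inertial type of $\overline{\rho}|_{I_\ell}$ without being minimally ramified---occur precisely when $\ell \in P_{\overline{\rho}}$, as recorded in the local deformation-theoretic analysis of \cite{diamond-flach-guo, dimitrov-ihara}. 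The hypothesis $P_{\overline{\rho}} \subseteq \Sigma$ is designed to exclude exactly such exotic lifts at primes $\ell \notin \Sigma$, which yields the matching of newform sets and hence the lemma.
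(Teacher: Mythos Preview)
Your sketch is correct and is essentially an unpacking of the argument behind \cite[Lemma 6.4.(i)]{dimitrov-ihara}, which is all the paper invokes for its proof. The paper gives no independent argument here, so there is nothing further to compare.
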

\begin{proof}
As in \cite[Lemma 5.4]{dimitrov-ihara},
 it is enough to show that there exists a unique isomorphism of $\mathbb{T}^S\otimes_{\mathcal{O}} \mathbb{C}$-algebras $\mathbb{T}(N_{\overline{\rho},\Sigma})\otimes_{\mathcal{O}} \mathbb{C}\simeq \mathcal{T}_{\Sigma}\otimes_{\mathcal{O}} \mathbb{C}$,
 where we fix an embedding $E \to \mathbb{C}$.
Since our $\mathcal{M}_{N_{\bar{\rho} }, \Sigma}\otimes \C$ is isomorphic to $\mathcal{M}_{\Sigma}\otimes \C$ in \cite{dimitrov-ihara} with $F=\Q$.
Hence, by \cite[Lemma 6.4.(i)]{dimitrov-ihara}, we complete the proof.
\end{proof}
\begin{rem}
Note that $P_{\overline{\rho}} \subseteq \Sigma$ is not assumed in \cite[Proposition 4.7]{ddt} since this case is excluded there. See \cite[Remark 3.7 and $\S$7.2]{diamond_extension_wiles} for detail. It corresponds to the type \textbf{V} in \cite[$\S$2]{diamond_extension_wiles}.
In \cite{diamond-flach-guo}, it is bypassed by using \cite[Lemma 1.5]{diamond-flach-guo}. See also the comment right after
\cite[Proposition 6.5]{dimitrov-ihara}.
\end{rem}

\subsubsection{The level structure $N^{\Sigma}_f$ for the $L$-value computation}
Following \cite[$\S$1.7.3]{diamond-flach-guo}, we introduce another level structure depending on a newform $f$ of level $N_f$ and $\Sigma$.
Let $d_0(\ell) := \mathrm{dim}_{E} \ \mathrm{H}^0(I_\ell, V_f)$ and define
\begin{equation} \label{eqn:level^Sigma}
N^{\Sigma}_f := N_f \cdot \prod_{\ell \in \Sigma} \ell^{d_0(\ell)} .
\end{equation}
For a newform $f = \sum a_n q^n$ of level $N_f$ and $\Sigma$,
we define the $\Sigma$-imprimitive eigenform $f^{\Sigma} = \sum b_n q^n$ of level $N^{\Sigma}_f$ by
$$b_n = 
\left\lbrace \begin{array}{ll}
0 & \textrm{ if $n$ is divisible by a prime in $\Sigma$, } \\
 a_n & \textrm{ otherwise. } 
 \end{array} \right.$$
Let $\mathbb{T} (N^{\Sigma}_f) $ be the full Hecke algebra over $\mathcal{O}$ faithfully acting on 
$\mathrm{H}^1_{\mathrm{\acute{e}t},c}(Y_{1}(N^{\Sigma}_f), \mathcal{F}^k_p)[\langle-\rangle - \mathbf{1}]$, and
$\mathfrak{m}^\Sigma \subseteq \mathbb{T} (N^{\Sigma}_f) $ be the maximal ideal generated by
$\mathfrak{m}_{\overline{\rho}}$ and $U_q$ for $q \in \Sigma$.
Denote by $\mathbb{T} (N^{\Sigma}_f)_{\mathfrak{m}^\Sigma}$ the localization of $\mathbb{T} (N^{\Sigma}_f)$ at $\mathfrak{m}^\Sigma$, and it is isomorphic to
 the image of $\mathbb{T}^S$ in $\mathrm{End}_{ \mathcal{O} } ( \mathrm{H}^1_{\mathrm{\acute{e}t},c}(Y_{1}(N^{\Sigma}_f), \mathcal{F}^k_p)[\langle-\rangle - \mathbf{1}]_{ \mathfrak{m}^\Sigma } )$ as before.

\subsubsection{Congruence ideals and the $R=\mathbb{T}$ theorem}
\begin{defn}[Congruence ideals; \cite{diamond-euler, diamond-flach-guo, dimitrov-ihara}] 
\begin{enumerate}
\item
For a newform $f$ which arises as a $\Sigma$-ramified deformation of $\overline{\rho}$, 
let 
$\pi_{f,\Sigma} : \mathcal{T}_\Sigma \to \mathcal{O}$
be the projection to the $f$-component.
Then the \textbf{$\Sigma$-ramified congruence ideal of $f$} is defined by
$$\eta_{f, \Sigma} := \pi_{f, \Sigma} \left( \mathrm{Ann}_{\mathcal{T}_\Sigma}( \mathrm{ker} \ \pi_{f, \Sigma} ) \right).$$
\item
For a newform $f \in S_k(\Gamma_0(N))$ and $\Sigma$, consider the $\Sigma$-imprimitive eigenform $f^\Sigma$ of level $N^{\Sigma}_f$ associated to $f$ 
and define
$\pi_{f^\Sigma} : \mathbb{T}(N^\Sigma_f)_{\mathfrak{m}^\Sigma} \to \mathcal{O}$
 by $T_\ell \mapsto a_\ell(f)$ for all $\ell \nmid N^\Sigma_f$.
Then the \textbf{congruence ideal of $f^\Sigma$} is defined by
$$\eta_{f^\Sigma}(N^\Sigma_f) := \pi_{f^\Sigma} \left( \mathrm{Ann}_{\mathbb{T}(N^\Sigma_f)_{\mathfrak{m}^\Sigma}}( \mathrm{ker} \ \pi_{f^\Sigma} ) \right) .$$
\end{enumerate}
\end{defn}
We also define the eigenform $f^{\Sigma, \alpha_r}$ of level $N_{\overline{\rho}, \Sigma}$ by
the $r$-stabilization of $f^\Sigma$ with $U_r$-eigenvalue $\alpha_r$.
\begin{lem}  \label{lem:removing_level_r} 
\begin{enumerate}
\item 
If we choose $\Sigma$ by the set of primes dividing $N_f/N(\overline{\rho})$, then $N_{\overline{\rho},\Sigma} = r \cdot N^\Sigma_f$.
\item
If $\Sigma$ contains $P_{\overline{\rho}}$ and $N_{\overline{\rho}, \Sigma} = r \cdot N^{\Sigma}_f$, then we have
$\eta_{f^\Sigma}(N^\Sigma_f) = \eta_{f^{\Sigma, \alpha_r}}(N_{\overline{\rho},\Sigma}) = \eta_{f, \Sigma} $
for a newform $f$ of level $N$ dividing $N_{\overline{\rho},\Sigma}$.
\end{enumerate}
\end{lem}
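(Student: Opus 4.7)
My plan is to treat the two parts in sequence.

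\textbf{Part (1)} is a local conductor comparison at each $\ell \in \Sigma$. Since $\ell \neq p$, the Swan conductor is unchanged under mod-$\lambda$ reduction, so $\mathrm{Sw}_\ell(V_f) = \mathrm{Sw}_\ell(\overline{\rho})$. Combining this with the Artin conductor formula $a_\ell(V) = \mathrm{Sw}_\ell(V) + \dim V - \dim V^{I_\ell}$ applied to both $V_f$ and $\overline{\rho}$ yields
$$\mathrm{ord}_\ell(N) + d_0(\ell) = c(\ell) + d(\ell).$$
For primes $\ell \mid N$ with $\ell \notin \Sigma$, the choice of $\Sigma$ as the set of primes dividing $N/N(\overline{\rho})$ forces $\mathrm{ord}_\ell(N) = c(\ell)$. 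Multiplying these local identities and inserting the auxiliary factor $r$ gives $r N^\Sigma = N_{\overline{\rho},\Sigma}$.

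For \textbf{Part (2)}, the second equality $\eta_{f^{\Sigma,\alpha_r}}(N_{\overline{\rho},\Sigma}) = \eta_{f,\Sigma}$ is a direct translation through Lemma \ref{lem:sigma-level-identification}. The isomorphism $\mathbb{T}(N_{\overline{\rho},\Sigma})_{\mathfrak{m}_\Sigma} \simeq \mathcal{T}_\Sigma$ carries the two projections $\pi_{f,\Sigma}$ to each other, since both are pinned down by the same data (namely $T_\ell \mapsto a_\ell(f)$ for $\ell \notin S$ together with $U_r \mapsto \alpha_r$), and consequently the annihilators of their kernels coincide.

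The substantive step is the first equality $\eta_{f^\Sigma}(N^\Sigma) = \eta_{f^{\Sigma,\alpha_r}}(N_{\overline{\rho},\Sigma})$, asserting that introducing the auxiliary prime $r$ does not change the congruence ideal. The condition $\bigl(\mathrm{tr}\,\overline{\rho}(\mathrm{Frob}_r)\bigr)^2 \neq r^{k-2}(1+r)^2$ is exactly what ensures that the characteristic polynomial $X^2 - \mathrm{tr}\,\overline{\rho}(\mathrm{Frob}_r)\,X + r^{k-1}$ has two distinct mod-$\lambda$ roots $\alpha_r,\beta_r$ with $\alpha_r/\beta_r \neq r^{\pm 1}$. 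I would then invoke the standard degeneracy-map argument: the two degeneracy maps between $\mathcal{M}_{N^\Sigma,\mathfrak{m}^\Sigma}$ and $\mathcal{M}_{rN^\Sigma}$ combine into a direct-sum map that, upon localizing at $\mathfrak{m}_\Sigma$ and imposing $U_r = \alpha_r$, becomes an isomorphism onto the $\mathfrak{m}_\Sigma$-localization at level $rN^\Sigma$. This yields a Hecke-equivariant identification $\mathbb{T}(rN^\Sigma)_{\mathfrak{m}_\Sigma} \simeq \mathbb{T}(N^\Sigma)_{\mathfrak{m}^\Sigma}$ under which the two projections $\pi_{f,\Sigma}$ correspond, and equality of congruence ideals follows. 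The most delicate point is tracking how the $\alpha_r$-stabilization on the level-$rN^\Sigma$ side is singled out precisely by $U_r - \alpha_r \in \mathfrak{m}_\Sigma$ and how this compatibility persists at the level of annihilator ideals (not merely Hecke algebras); this is standard in \cite{diamond-flach-guo, dimitrov-ihara} but merits explicit verification.
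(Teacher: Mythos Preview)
Your Part~(1) argument via the Swan/Artin conductor formula is correct and makes explicit what the paper dismisses in one line as ``immediately follows from the definitions.'' Your treatment of the second equality in Part~(2) matches the paper's exactly.

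For the first equality in Part~(2) you take a genuinely different route. The paper does \emph{not} argue via degeneracy maps. Instead it sandwiches $\eta_{f^{\Sigma,\alpha_r}}(rN^\Sigma)$ between $\eta_{f^{\Sigma\cup\{r\}}}(r^2N^\Sigma)$ and $\eta_{f^\Sigma}(N^\Sigma)$ using the basic monotonicity of congruence ideals under level raising (cf.\ \cite[(5.2.2)]{ddt}), and then shows that the two outer ideals coincide by invoking the identification of (cohomology) congruence ideals with adjoint $L$-values from \S\ref{subsec:cohomology_congruence_ideals} and \cite[Proposition~6.3, Theorem~6.6(1)]{dimitrov-ihara}: the ratio is the adjoint Euler factor at $r$ evaluated at $s=1$, and the conditions on $r$ force this to be a $\lambda$-adic unit. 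The middle term is then squeezed.

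Your degeneracy-map approach is more elementary in that it avoids the adjoint $L$-value machinery, but it trades this for an integral Hecke-module isomorphism that needs care. One small slip: the stated conditions on $r$ give $\alpha_r/\beta_r \not\equiv r^{\pm 1}$ and $r\not\equiv 1\pmod p$, but do \emph{not} by themselves force $\alpha_r\not\equiv\beta_r\pmod\lambda$ (that would be $(\mathrm{tr}\,\overline\rho(\mathrm{Frob}_r))^2\neq 4r^{k-1}$, a different condition). When $\alpha_r\equiv\beta_r$, both stabilizations of $f$ land in $\mathfrak{m}_\Sigma$, and the clean ``picks out one copy of $\mathcal{M}_{N^\Sigma}$'' picture becomes more delicate. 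The paper's sandwich argument is insensitive to this distinction, which is one reason it is preferred here.
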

\begin{proof}
Since the first statement immediately follows from the definitions, we focus on the second one.
Since the second equality immediately follows from Lemma \ref{lem:sigma-level-identification}, it suffices to check the first equality.
By a basic property of the congruence ideals (cf. \cite[(5.2.2)]{ddt}), we have
$\eta_{f^{\Sigma \cup \lbrace r \rbrace}}(r^2 \cdot N^\Sigma_f) \subseteq \eta_{f^{\Sigma, \alpha_r}}(r \cdot N^\Sigma_f)  \subseteq \eta_{f^\Sigma}(N^\Sigma_f) $.
It implies that
$ \#\mathcal{O} / \eta_{f^{\Sigma \cup \lbrace r \rbrace}}(r^2 \cdot N^\Sigma_f) \geq
\#\mathcal{O} / \eta_{f^{\Sigma, \alpha_r}}(r \cdot N^\Sigma_f)  \geq 
\#\mathcal{O} /\eta_{f^\Sigma}(N^\Sigma_f) $.
The properties of $r$ directly implies that the Euler factor of the adjoint $L$-function of $f$ at $s=1$ is a unit.
By the freeness of the Hecke modules (e.g. $\S$\ref{subsec:cohomology_congruence_ideals}), \cite[Proposition 6.3 and Proof of Theorem 6.6.(1)]{dimitrov-ihara}, and the property of $r$ above, we have
$ \#\mathcal{O} / \eta_{f^{\Sigma \cup \lbrace r \rbrace}}(r^2 \cdot N^\Sigma_f) =
\#\mathcal{O} /\eta_{f^\Sigma}(N^\Sigma_f) $.
Therefore, the conclusion follows.
\end{proof}
\begin{rem}
The level $N_{\overline{\rho},\Sigma}$ in (\ref{eqn:level_Sigma}) is useful to work with $R=\mathbb{T}$ theorem and 
the level $N^\Sigma_f$ in (\ref{eqn:level^Sigma}) is convenient when we work with adjoint $L$-values.
\end{rem}
We are now ready to recall the $R=\mathbb{T}$ theorem which we use later.
\begin{thm}[Diamond--Flach--Guo \cite{diamond-flach-guo}, Dimitrov \cite{dimitrov-ihara}] \label{thm:diamond-flach-guo-dimitrov}
Let $\overline{\rho} : G_{\mathbb{Q}} \to \mathrm{GL}_2(\mathbb{F})$ be an odd, absolutely irreducible, and continuous Galois representation satisfying Assumption \ref{assu:working_assumptions}. Assume that $\Sigma$ contains $P_{\overline{\rho}}$.
Then the natural surjective map
$$\pi_\Sigma : \mathcal{R}_{\Sigma} \to \mathcal{T}_{\Sigma}$$
 is an isomorphism of finite flat complete intersections over $\mathcal{O}$ and 
$\mathcal{M}_{N_{\overline{\rho},\Sigma}}$ is free of rank two over $\mathcal{T}_{\Sigma}$.
In particular, all $\Sigma$-ramified deformations of $\overline{\rho}$ are modular.\\
Moreover, for all newforms $f$ such that $\rho_f$ is a $\Sigma$-ramified deformation of $\overline{\rho}$, we have
$$\# \mathrm{Sel}_{\Sigma}(\mathbb{Q}, \mathrm{ad}^0(f) \otimes E/\mathcal{O}) = \# \mathcal{O}/ \eta_{f,\Sigma} < \infty$$
where $\mathrm{Sel}_{\Sigma}(\mathbb{Q}, \mathrm{ad}^0(f) \otimes E/\mathcal{O})$ is the $\Sigma$-imprimitive adjoint Selmer group defined in Definition \ref{defn:adjoint_selmer}.
\end{thm}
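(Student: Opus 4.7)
The plan is to assemble this statement from the work of Diamond--Flach--Guo \cite{diamond-flach-guo} and Dimitrov \cite{dimitrov-ihara} by running the Taylor--Wiles--Kisin patching machine with the specific local deformation conditions spelled out in Assumption \ref{assu:working_assumptions}. I would proceed in three stages: first the minimal case $\Sigma = P_{\overline{\rho}}$, then general $\Sigma \supseteq P_{\overline{\rho}}$ by prime-by-prime level raising, and finally the Selmer-group identification.

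For the minimal case I would deploy the Taylor--Wiles patching argument. Assumption (TW) produces, for each $n \geq 1$, an auxiliary set $Q_n$ of Taylor--Wiles primes $q \equiv 1 \pmod{p^n}$ for which $\overline{\rho}(\mathrm{Frob}_q)$ has distinct eigenvalues; this is exactly where the absolute irreducibility over $\mathbb{Q}(\sqrt{p^*})$ is essential. Assumption (FL) lets us encode the crystalline condition at $p$ via Fontaine--Laffaille theory, so that the local deformation problem at $p$ is formally smooth of the expected dimension, as in \cite[\S1]{diamond-flach-guo}. The auxiliary prime $r$, with the non-congruence condition $(\mathrm{tr}\,\overline{\rho}(\mathrm{Frob}_r))^2 \neq r^{k-2}(1+r)^2$, ensures that the relevant \'etale cohomology of $Y_1(N_{\overline{\rho},\Sigma})$ is torsion-free after localization at $\mathfrak{m}_\Sigma$, providing the ``neat level'' input for patching. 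The output is $\mathcal{R}^{\min} \cong \mathcal{T}^{\min}$ as a finite flat complete intersection over $\mathcal{O}$, together with rank-two freeness of the patched cohomology module over the patched ring; after unpatching, this gives freeness of $\mathcal{M}_{N_{\overline{\rho}, P_{\overline{\rho}}}, \mathfrak{m}_{P_{\overline{\rho}}}}$ over $\mathcal{T}_{P_{\overline{\rho}}}$.

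To promote this to general $\Sigma$ I would add primes one at a time, using the Wiles--Lenstra numerical criterion. On the Galois side, passing from $\Sigma'$ to $\Sigma' \cup \{q\}$ enlarges the relative tangent space by an amount governed by $\dim_{\mathbb{F}} \mathrm{ad}^0(\overline{\rho})^{I_q}$ and the chosen local deformation condition at $q$. On the Hecke side, Ihara's lemma (and Dimitrov's variant for $q \in P_{\overline{\rho}}$) computes the corresponding change in the congruence ideal; Lemma \ref{lem:sigma-level-identification} lets us translate between $\mathbb{T}(N_{\overline{\rho},\Sigma})_{\mathfrak{m}_\Sigma}$ and $\mathcal{T}_\Sigma$. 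The numerical criterion then propagates the complete-intersection isomorphism inductively, while compatibility of patching with each level-raising step preserves the rank-two freeness of the cohomology module.

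For the Selmer group formula, I would invoke Wiles' identification (in the form of \cite[Theorem 6.6]{dimitrov-ihara}) of the $\Sigma$-imprimitive adjoint Selmer group with the Pontryagin dual of the relative cotangent space at $f$ of $\mathcal{R}_\Sigma$; transported through $R = \mathbb{T}$, this becomes $\mathrm{Hom}_{\mathcal{O}}(\mathfrak{p}_f / \mathfrak{p}_f^2,\, E/\mathcal{O})$ where $\mathfrak{p}_f = \ker \pi_{f,\Sigma}$. Since $\mathcal{T}_\Sigma$ is a finite flat complete intersection over $\mathcal{O}$, the Tate--Wiles lemma (the ``complete intersection = congruence = Fitting'' identity) equates the order of this cotangent module with $\#\mathcal{O} / \eta_{f,\Sigma}$, and finiteness is immediate from $\mathcal{T}_\Sigma$ being finite over $\mathcal{O}$. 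The main obstacle is the second stage: choosing the right non-minimal local deformation condition at each $q \in \Sigma \setminus P_{\overline{\rho}}$ so that the Galois-side tangent space increment matches the Hecke-side level-raising computation (the issue flagged in Remark \ref{rem:local_deformation_conditions}). The Diamond--Flach--Guo/Dimitrov convention of essentially removing the Euler factor at $q$ on both sides is what reconciles the two, and it is crucial that our deformations are taken with the determinant fixed to $\chi^{1-k}_{\mathrm{cyc}}$ so that the local and global counts align.
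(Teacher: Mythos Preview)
The paper does not prove this theorem; it is quoted as a known result from \cite{diamond-flach-guo} and \cite{dimitrov-ihara}, with only a remark afterward about terminology and hypotheses. So there is no in-paper proof to compare your proposal against. Your three-stage outline (minimal Taylor--Wiles patching, level-raising via the numerical criterion and Ihara's lemma, then the cotangent/Selmer identification through the complete-intersection property) is a faithful sketch of the standard argument in those references.

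One small misattribution: your invocation of Remark \ref{rem:local_deformation_conditions} in the second stage is out of place. That remark concerns the \emph{refined} $R=\mathbb{T}$ theorem with $\Sigma^-$-new local conditions (Theorem \ref{thm:refined_R=T}), where one genuinely cannot write down a deformation condition capturing ``unramified or Steinberg'' simultaneously. For the present theorem the local condition at each $q \in \Sigma$ is simply unrestricted, and the matching of Galois-side and Hecke-side increments is exactly what \cite[Theorem 6.6]{dimitrov-ihara} (building on \cite{diamond-flach-guo}) carries out; there is no obstacle of the kind you flag.
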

\begin{rem} 
The notion of  \emph{cohomology congruence ideals} is used in the original statement of \cite{diamond-flach-guo}. See $\S$\ref{subsec:cohomology_congruence_ideals} for detail. The term was coined by Diamond in \cite{diamond-euler}.
 Careful readers will notice that the modularity lifting theorems in \cite{diamond-flach-guo} and \cite{dimitrov-ihara} have slightly different conditions on the image of the residual representation. However, it is easy to check either one is enough to obtain the same result when the base field is $\mathbb{Q}$.
\end{rem}

Let $f \in S_k(\Gamma_0(N))$ be a normalized eigenform whose residual representation is isomorphic to $\overline{\rho}$. 
When we write $\mathbb{T}(N)_{\mathfrak{m}_f}$, it is the localization of the full Hecke algebra $\mathbb{T}(N)$ at the maximal ideal $\mathfrak{m}_f$ generated by the Hecke eigensystem of $f$ and $\lambda$.
As before, when $S$ contains $\Sigma$ and all prime divisors of $N$, 
$\mathbb{T}(N)_{\mathfrak{m}_f}$ can be identified with the image of $\mathbb{T}^S$ in $\mathrm{End}_{\mathcal{O}} \left(  \mathrm{H}^1_{\mathrm{\acute{e}t},c}(Y_{1}(N), \mathcal{F}^k_p)[\langle-\rangle - \mathbf{1}]_{\mathfrak{m}_f} \right)$.
Note that $\mathfrak{m}_f$ is  generated by $\mathfrak{m}_{\overline{\rho}}$, $U_q -\alpha_q(f)$ for $q$ exactly dividing $N$, and $U_q$ for $q$ dividing $N$ more than twice where  $\alpha_q$ is the $q$-th Fourier coefficient of $f$.
Since $\lambda \in \mathfrak{m}_f$, the choice of a congruent eigenform does not change the maximal ideal $\mathfrak{m}_f$.

We decompose $N = N^+ \cdot N^-$ with $(N^+, N^-) = 1$ and $N^-$ square-free.
Assume that $f$ is new at all primes dividing $N^-$.
Then the map
$\pi_{f}: \mathbb{T}(N^+N^-)_{\mathfrak{m}_f} \to \mathcal{O} $ defined by $T_\ell \mapsto a_\ell(f)$ for all $\ell \nmid N^+N^-$ factors through the $N^-$-new quotient $\mathbb{T}(N^+N^-)^{N^-}_{\mathfrak{m}_f}$ of $\mathbb{T}(N^+N^-)_{\mathfrak{m}_f}$.
Thus, we naturally have the surjective map
$\pi^{N^-}_{f} : \mathbb{T}(N^+N^-)^{N^-}_{\mathfrak{m}_f} \to \mathcal{O} $.
\begin{defn}
The \textbf{$N^-$-new congruence ideal of $f$} is defined by
$$\eta_{f}(N^+, N^-)  := \pi^{N^-}_{f} \left( \mathrm{Ann}_{\mathbb{T}(N^+N^-)^{N^-}_{\mathfrak{m}_f} }( \mathrm{ker} \ \pi^{N^-}_{f} ) \right) .$$
\end{defn}
Note that
$\mathbb{T}(N^+N^-)^{N^-}_{\mathfrak{m}_f}$ acts faithfully on the $N^-$-new subspace $S_k(\Gamma_0(N))^{N^-}_{\mathfrak{m}_f}$ of $S_k(\Gamma_0(N))_{\mathfrak{m}_f}$.

\subsection{Tamagawa exponents} \label{subsec:tamagawa_exponents}
Since we assume the image of $\overline{\rho}$ is irreducible (Assumption \ref{assu:working_assumptions}.(TW)), 
a Galois stable $\mathcal{O}$-lattice $T_f$ is uniquely determined up to scalars, so $\rho_n$ and $\overline{\rho}$ also have the same uniqueness property.
We define $A_{f} := V_{f}/ T_{f}$ and then have $A_{f}[\lambda^n] \simeq T_f/\lambda^n T_f$ for all $n \geq 1$.
Following \cite[Definition 3.3]{pw-mu}, Tamagawa exponents are defined as follows.
\begin{defn} \label{defn:tamagawa_exponents}
The \textbf{Tamagawa exponent $t_{f} (q)$ for $f$} is defined by the largest integer $t$ such that
$A_{f}[\lambda^{t}]$ is unramified at $q$ and $A_{f}[\lambda^{t+1}]$ is ramified  at $q$.
\end{defn}
Note that $t_{f} ( q )$ is finite if $q$ divides $N$, and $t_f(q) = 0$ if $q$ divides $N(\overline{\rho})$.
It is known that $(\lambda^{t_{f} (q)}) = \mathrm{Fitt}_{ \mathcal{O} } \left( \left( \mathrm{H}^1(I_{q^2}, T_{f} )^{G_{\mathbb{Q}_{q^2}}}\right)_{\mathrm{tors}} \right)$ when $q$ divides $N^-$ (c.f. \cite[I.4.2.2]{fontaine-perrin-riou}). Here, $\mathbb{Q}_{q^2}$ is the unramified quadratic extension of $\mathbb{Q}_{q}$ and $I_{q^2}$ is the inertia subgroup of $G_{\mathbb{Q}_{q^2}}$.
In the case of elliptic curves, the Tamagawa exponent also coincides with the $p$-exponent of the local Tamagawa factor at $q$ of the elliptic curve over $\mathbb{Q}_{q^2}$. 
See \cite[Page 1354]{pw-mu}, \cite[Corollary 2 and Corollary 6.15]{chida-hsieh-main-conj}, and \cite[$\S$6.3]{wei-zhang-mazur-tate}.

\subsection{The idea of proof}
In the proof of the Bloch--Kato conjecture for adjoint motives of modular forms \cite{diamond-flach-guo}, the following connections are established:
\[
\xymatrix{
\textrm{adjoint $L$-values} \ar@{<->}[r]^-{\textrm{Hida}} & { \substack{ \textrm{cohomology} \\
\textrm{congruence ideals} } } \ar@{<->}[r]^-{\textrm{freeness}}
& \textrm{congruence ideals} \ar@{<->}[r]^-{R= \mathbb{T}} & \textrm{adjoint Selmer groups} .
}
\]
\begin{itemize}
\item The connection between adjoint $L$-values and cohomology congruence ideals is due to Hida's formula \cite{hida-invent-1981}. See also \cite{ddt}.
\item The connection between cohomology congruence ideals and congruence ideals follows from the freeness of the Hecke module over the associated Hecke algebra. Such freeness results can be obtained in two ways: the method of Mazur, Ribet, Wiles, and Faltings--Jordan \cite{faltings-jordan} (cf. \cite[Theorem 1.13]{vatsal-cong}) based on the $q$-expansion principle and the crystalline comparison isomorphism (Fontaine's $C_{\mathrm{cris}}$) and the Diamond's improvement of the Taylor--Wiles system argument \cite{diamond_multiplicity_one}.
 In Diamond's method, the level of the Hecke module should be of the form $N_{\overline{\rho},\Sigma}$ for some $\Sigma$ \cite[Theorem 2.4]{diamond_multiplicity_one}.
\item The connection between congruence ideals and adjoint Selmer groups follows from the Taylor--Wiles system argument.
\end{itemize}
Our main theorem (Theorem \ref{thm:main_thm}) precisely measures the difference of two congruence ideals $\eta_f(N)$ and $\eta_f(N^+,N^-)$, but making such a connection is not straightforward at all. The following diagram summarizes how the connection is made.
\[
{ \scriptsize
\xymatrix{
L^{\Sigma}(\mathrm{ad}^0(f), 1) \ar@{<->}[d]_-{\textrm{Euler factors}}="a" \ar@{<->}[r]^-{\textrm{Hida}} & \delta_{f^{\Sigma}}(N_{\overline{\rho},\Sigma}/r)= \delta_{f^{\Sigma, \alpha_r}}(N_{\overline{\rho},\Sigma}) \ar@{=}[r]^-{\textrm{freeness}}_-{\textrm{$\S$\ref{subsec:cohomology_congruence_ideals}}} \ar@{<->}[d]^-{\textrm{$\S$\ref{sec:adjoint_L-values}}}_-{``\textrm{unit Euler factors}"}="b" & \eta_{f^{\Sigma, \alpha_r}}(N_{\overline{\rho},\Sigma}) \ar@{<->}[r]^-{R=\mathbb{T}}_-{ \textrm{ Theorem \ref{thm:diamond-flach-guo-dimitrov} } } & \mathrm{Sel}_{\Sigma}(\mathbb{Q}, \mathrm{ad}^0(f) \otimes E/\mathcal{O}) \ar@{<->}[dd]_-{ { \substack{ \textrm{explicit computation} \\ \textrm{via Greenberg-Wiles} } } }^-{ \textrm{$\S$\ref{sec:adjoint_Selmer} } } \\
L^{\Sigma^+}(\mathrm{ad}^0(f), 1) \ar@{<->}[r]^-{\textrm{Hida}} & \delta_{f^{\Sigma^+}}(N^+_{\overline{\rho},\Sigma^+}/r \cdot N^-) \ar@{=}[r]^-{\textrm{freeness}}_-{\textrm{$\S$\ref{subsec:cohomology_congruence_ideals}}} & \eta_{f^{\Sigma^+}}(N^+_{\overline{\rho},\Sigma^+}/r \cdot N^-) \ar@{<-->}[d] \ar@{<->}[dl]_-{``\textrm{Euler factors}"}^-{\textrm{$\S$\ref{sec:adjoint_L-values}}} &  \\
 & \eta_{f}(N)  \ar@{<-->}[d] & \eta_{f^{\Sigma^+}}(N^+_{\overline{\rho},\Sigma^+} /r , N^-) \ar@{<->}[r]^-{ { \substack{ \textrm{a refined} \\ R = \mathbb{T} }} }_-{\textrm{$\S$\ref{sec:deformation_theory}}} \ar@{<->}[dl]^-{``\textrm{Euler factors}"}_-{\textrm{$\S$\ref{sec:adjoint_L-values}}} & \mathrm{Sel}^{\Sigma^-}_{\Sigma^+}(\mathbb{Q}, \mathrm{ad}^0(f) \otimes E/\mathcal{O}) \\
 &  \eta_{f}(N^+,N^-)
\ar@{-->}"a";"b"
}
}
\]
The proof of Theorem \ref{thm:main_thm} consists of two parts.
The first part (the Selmer computation) is an approximation of the main theorem (Theorem \ref{thm:key_thm}) whose proof is based on the $R=\mathbb{T}$ argument and the computation of Galois cohomology.
However, it does not give the exact formula but a slightly different formula.
The second one (the $L$-value computation) removes the difference (Proposition \ref{prop:second_correction}) whose proof is based on the explicit comparison among adjoint $L$-values.
Theorem \ref{thm:main_thm} immediately follows from these two results.
\subsubsection{The Selmer computation} \label{subsubsec:first-approximation}
Let $\Sigma$ be the set of primes dividing $N/N(\overline{\rho})$, $\Sigma^+$ the subset of $\Sigma$ consisting of primes not dividing $N^-$, and $\Sigma^- := \Sigma \setminus \Sigma^+$.
We also decompose $N(\overline{\rho}) = N(\overline{\rho})^+ \cdot N(\overline{\rho})^-$ following the decomposition $N = N^+ \cdot N^-$ in Theorem \ref{thm:main_thm} such that $N(\overline{\rho})^{\pm} \vert N^{\pm}$, respectively.
For a prime $\ell$ dividing $N^+$, we put
$c^+(\ell) := \mathrm{ord}_{\ell} (N(\overline{\rho})^+  )$ and $d(\ell) = \mathrm{dim}_{\mathbb{F}} \ \mathrm{H}^0(I_\ell, \overline{\rho})$. We define
\begin{equation} \label{eqn:level_N^+}
N^+_{\overline{\rho},\Sigma^+} := r \cdot \prod_{\ell \in \Sigma^+} \ell^{c^+(\ell) + d(\ell)} \prod_{\ell \not\in \Sigma^+, \ell \vert N(\overline{\rho})^+} \ell^{c^+(\ell)}
\end{equation}
where $r$ is the same one in (\ref{eqn:level_Sigma}).
Then we have $N^+ \vert N^+_{\overline{\rho},\Sigma^+}$. Note that $\ell \in \Sigma^+$ implies $\ell^2 \vert N^+_{\overline{\rho},\Sigma^+}$.
Comparing with $N(\overline{\rho})^+$ and $N^+$, we have
\begin{align*}
N^+_{\overline{\rho},\Sigma^+} & = r \cdot  N(\overline{\rho})^+ \cdot \prod_{\ell \in \Sigma^+} \ell^{d(\ell)} \\
& = r \cdot  N^+ \cdot \prod_{\ell \in \Sigma^+} \ell^{d_0(\ell)} .
\end{align*}
How much $N^+_{\overline{\rho},\Sigma^+}$ and $N^+$ differ at primes in $\Sigma^+$?
For $\ell \in \Sigma^+$, we observe the following:
\begin{itemize}
\item $d_0(\ell) \neq 2$ since $\Sigma^+$ is (minimally) chosen;
\item $d_0(\ell) = 0$ if $\ell^2$ divides $N^+$; 
 \item $d_0(\ell) = 1$ if $\ell$ divides $N^+$ exactly.
\end{itemize}
Note that $\mathrm{ord}_\ell (N^+_{\overline{\rho},\Sigma^+}/N^+) \neq 0$ happens only in the last case. Due to the minimal choice of $\Sigma$, this case is equivalent to the case $\ell \Vert N^+$ but $\ell \nmid N(\overline{\rho})$.

\begin{thm}[Selmer computation] \label{thm:key_thm}
Let $f \in S_k(\Gamma_0(N))$ be a newform with $\overline{\rho}_f \simeq \overline{\rho}$, and
$\Sigma$ the set of primes dividing $N/N(\overline{\rho})$.
Suppose that $\overline{\rho}$ satisfies Assumption \ref{assu:working_assumptions}.
We assume the following conditions:
\begin{itemize}
\item $2 \leq k \leq p-1$.
\item $\Sigma$ contains $P_{\overline{\rho}}$ (defined in (\ref{eqn:P_rhobar})).
\item $p$, $N^+$, and $N^-$ are pairwisely relatively prime.
\item $N^-$ is square-free.
\item For a prime divisor $q$ of $N^-$, if $q \equiv  1 \pmod{p}$, then $\overline{\rho}$ is ramified at $q$.
\end{itemize}
Then we have
\begin{equation} \label{eqn:first_approximation}
\mathrm{ord}_\lambda \eta_{f^{\Sigma}}(N^{\Sigma}_f) = \mathrm{ord}_\lambda \eta_{f^{\Sigma^+}}(N^+_{\overline{\rho},\Sigma^+}/r, N^-) +  \sum_{q \vert N^-} t_f(q) .
\end{equation}
\end{thm}
We prove Theorem \ref{thm:key_thm} in $\S$\ref{sec:deformation_theory} and $\S$\ref{sec:adjoint_Selmer}. 
In $\S$\ref{sec:deformation_theory}, we review the deformation theory of Galois representations, study a presentation of the Galois deformation ring, and prove a refined $R=\mathbb{T}$ theorem.
In $\S$\ref{sec:adjoint_Selmer}, we review the standard facts of Galois cohomology and compute the difference between adjoint Selmer groups with different local conditions.
\begin{rem} 
\begin{enumerate}
\item Lemma \ref{lem:removing_level_r} is used in (\ref{eqn:first_approximation}).
\item 
\emph{The disadvantage} of the Selmer computation is the rigidity of the level structure $N^+_{\overline{\rho},\Sigma^+}/r$.
If $\ell \in \Sigma^+$, then $\ell^2$ must divide $N^+_{\overline{\rho},\Sigma^+}$.
For example, the Selmer computation does not imply
$$\mathrm{ord}_\lambda \eta_{f^{\Sigma}}(N^{\Sigma}_f)
 = \mathrm{ord}_\lambda \eta_{f^{\Sigma^+}}(N^+_{\overline{\rho},\Sigma^+}/r \cdot \ell , N^-/ \ell) +  \sum_{q \vert N^-/\ell} t_f(q) $$
for any prime $\ell$ dividing $N^-$ at which $\overline{\rho}$ is unramified (cf. \cite{lundell-level-lowering}).
\end{enumerate}
\end{rem}
In order to obtain Theorem \ref{thm:main_thm} from Theorem \ref{thm:key_thm}, we will show the following implications
\begin{align*}
 \mathrm{ord}_\lambda \eta_{f^{\Sigma}}(N^{\Sigma}_f) & = \mathrm{ord}_\lambda \eta_{f^{\Sigma^+}}(N^+_{\overline{\rho},\Sigma^+}/r, N^-) +  \sum_{q \vert N^-} t_f(q)  \\
 \Rightarrow
\mathrm{ord}_\lambda \eta_{f^{\Sigma}}(N^+_{\overline{\rho},\Sigma^+}/r \cdot N^-) & = \mathrm{ord}_\lambda \eta_{f^{\Sigma^+}}(N^+_{\overline{\rho},\Sigma^+}/r, N^-) +  \sum_{q \vert N^-} t_f(q) \\
 \Rightarrow
\mathrm{ord}_\lambda \eta_{f^{\Sigma}}(N^+ \cdot N^-) & = \mathrm{ord}_\lambda \eta_{f^{\Sigma^+}}(N^+, N^-) +  \sum_{q \vert N^-} t_f(q) 
\end{align*}
by using the $L$-value computation.

\subsubsection{The $L$-value computation} \label{subsubsec:second_correction}
We assume the freeness result described in $\S$\ref{subsec:cohomology_congruence_ideals} to identify the congruence ideals and the cohomology congruence ideals here.
The $L$-value computation is the following proposition.
\begin{prop}[$L$-value computation] \label{prop:second_correction}
We keep the assumptions of Theorem \ref{thm:key_thm}.
\begin{enumerate}
\item 
Let $\Sigma^-$ is the set of primes dividing $N^-$ where $\overline{\rho}$ is unramified.
If any prime in $\Sigma^-$ is not congruent to $\pm 1$ modulo $p$, then
$$\mathrm{ord}_\lambda \eta_{f^{\Sigma}}(N^{\Sigma}_f)  = \mathrm{ord}_\lambda \eta_{f^{\Sigma^+}}(N^+_{\overline{\rho},\Sigma^+}/r \cdot N^-).$$
\item  We have
$$\mathrm{ord}_\lambda \eta_{f^{\Sigma^+}}(N^+_{\overline{\rho},\Sigma^+} / r \cdot N^-) - \mathrm{ord}_\lambda \eta_{f}(N) = \mathrm{ord}_\lambda \eta_{f^{\Sigma^+}}(N^+_{\overline{\rho},\Sigma^+} / r , N^-)  - \mathrm{ord}_\lambda \eta_{f}(N^+, N^-) .$$
\end{enumerate}
\end{prop}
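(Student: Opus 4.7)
The strategy is to reduce both identities to a comparison of local Euler factors of the adjoint $L$-value $L(\mathrm{ad}^0 f, 1)$, via Hida's formula and the freeness of the relevant cohomology Hecke modules ($\S$\ref{subsec:cohomology_congruence_ideals}), which together identify each congruence ideal in the proposition with the corresponding cohomological congruence ideal up to a $\lambda$-unit. The $\lambda$-valuation of a ratio of two such congruence ideals at nearby levels is then dictated by the ratio of the Euler factors of $\mathrm{ad}^0 f$ at the primes where the levels differ; this is the vertical ``Euler factors'' arrow in the diagram of $\S$\ref{subsubsec:first-approximation}.

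For part (1), the two levels satisfy
$$N^{\Sigma} = N \cdot \prod_{\ell \in \Sigma^+} \ell^{d_0(\ell)} \cdot \prod_{\ell \in \Sigma^-} \ell^{d_0(\ell)}, \qquad N^+_{\overline{\rho},\Sigma^+}/r \cdot N^- = N \cdot \prod_{\ell \in \Sigma^+} \ell^{d_0(\ell)},$$
so they agree outside $\Sigma^-$. Hida's formula at the two levels then gives
$$\mathrm{ord}_\lambda \eta_{f^{\Sigma^+}}(N^+_{\overline{\rho},\Sigma^+}/r \cdot N^-) - \mathrm{ord}_\lambda \eta_{f^{\Sigma}}(N^{\Sigma}) = \sum_{\ell \in \Sigma^-} \mathrm{ord}_\lambda E_\ell(\mathrm{ad}^0 f, 1).$$
For each $\ell \in \Sigma^-$, since $\ell \| N^-$ and $\overline{\rho}$ is unramified at $\ell$, the form $f$ is Steinberg at $\ell$; consequently $E_\ell(\mathrm{ad}^0 f, 1)$ is, up to a $\lambda$-unit, a product of terms of the shape $1 - \ell^{\pm 1}$, each of which is a $\lambda$-unit under the hypothesis $\ell \not\equiv \pm 1 \pmod{p}$.

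For part (2), both sides of the asserted equality measure the change in the $\lambda$-valuation of a congruence ideal when the level is enlarged from $N^+$ to $N^+_{\overline{\rho},\Sigma^+}/r$ with the $N^-$-part unchanged; the left-hand side records the change in the full Hecke algebra, the right-hand side the change in its $N^-$-new quotient. Since the enlargement occurs only at primes $\ell \in \Sigma^+$ with $\ell \| N^+$ and $\ell \nmid N(\overline{\rho})^+$, Hida's formula expresses each change as a product of Euler factors of $\mathrm{ad}^0 f$ at those primes. Because $\Sigma^+$ is disjoint from the set of primes dividing $N^-$, the $N^-$-new projection leaves the $\Sigma^+$-Euler-factor contribution unchanged, and the two differences coincide.

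The main technical obstacle is making the Hida-style cohomological identification of congruence ideals precise in the $N^-$-new setting required by part (2): one must verify that freeness of the cohomology Hecke module descends to the $N^-$-new quotient at level $N^+_{\overline{\rho},\Sigma^+}/r \cdot N^-$, and that the Euler-factor bookkeeping at primes in $\Sigma^+$ is unaffected by this projection. Once these inputs are in place, both parts reduce to the same Steinberg/unramified Euler-factor calculation at $s = 1$ without further Galois-theoretic input.
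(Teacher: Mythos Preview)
Your proposal is correct and follows essentially the same route as the paper: both parts are deduced from the identification $\delta = \eta$ (via freeness, including the $N^-$-new freeness you flag as the main obstacle, which the paper establishes in Corollary~\ref{cor:N^--freeness}) together with the Euler-factor formulas $\delta_{f^{\Sigma}}(N^\Sigma) = \delta_f(N)\prod_{\ell\in\Sigma} L^{\mathrm{nv}}_\ell(\mathrm{ad}^0 f,1)^{-1}$ and its $N^-$-new analogue (Corollary~\ref{cor:N^--new-adjoint-L-values}). Your Steinberg computation at $\ell\in\Sigma^-$ giving $L_\ell(\mathrm{ad}^0 f,1)^{-1} = 1-\ell^{-2}$, a $\lambda$-unit when $\ell\not\equiv\pm 1\pmod p$, is exactly the paper's argument for part~(1), and your observation that the same $\Sigma^+$-Euler product appears on both sides of part~(2) matches the paper's proof.
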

We prove Proposition \ref{prop:second_correction} in $\S$\ref{sec:adjoint_L-values}.
In $\S$\ref{sec:adjoint_L-values}, we recall cohomology congruence ideals and study their interpretation as the adjoint $L$-values.
Theorem \ref{thm:main_thm} immediately follows from Theorem \ref{thm:key_thm} and Proposition \ref{prop:second_correction}.

\section{Deformation theory and a refined $R=\mathbb{T}$ theorem} \label{sec:deformation_theory}
We recall the relevant deformation theory of Galois representations and prove a refined $R=\mathbb{T}$ theorem (Theorem \ref{thm:refined_R=T}).

Let $\overline{\rho}$ be the residual representation fixed in $\S$\ref{subsec:modularity_lifting_congruence_ideals}.
Then $\overline{\rho}$ factors through $G_{\mathbb{Q}, S} = \mathrm{Gal}(\mathbb{Q}_S/\mathbb{Q})$ where
$\mathbb{Q}_S$ is the maximal extension of $\mathbb{Q}$ unramified outside $S \cup \{\infty\}$.
From now on, we regard $\overline{\rho}$ as a representation of $G_{\mathbb{Q}, S}$
$$\overline{\rho} : G_{\mathbb{Q}, S} \to \mathrm{GL}_2(\mathbb{F}).$$ 
We fix the determinant of all the liftings and deformations of $\overline{\rho}$ to be $\chi^{1-k}_{\mathrm{cyc}}$ throughout this article.

\subsection{A local-global principle of deformation functors} \label{subsec:local-global-deformation}
Let $\mathrm{CNL}_{\mathcal{O}}$ be the category of complete Noetherian local (CNL) $\mathcal{O}$-algebras with residue field $\mathbb{F}$ whose morphisms are local $\mathcal{O}$-algebra morphism inducing the identity map on $\mathbb{F}$.
Let $\mathfrak{D}_{G_{\mathbb{Q},S}}: \mathrm{CNL}_{\mathcal{O}} \to \mathrm{Sets}$ be the functor such that
for $R\in \mathrm{CNL}_{\mathcal{O}}$,
 $\mathfrak{D}_{G_{\mathbb{Q},S}}(R)$
is the set of deformations $\rho: G_{\mathbb{Q},S}\to \mathrm{GL}_2(R)$ of $\overline{\rho}$
such that $\det(\rho)=\chi^{1-k}_{\mathrm{cyc}}\otimes_{\mathcal{O}}R$.
In particular, $\rho$ is unramified outside $S  \cup \{\infty\}$.

 Under Assumption \ref{assu:working_assumptions}.(TW),
 $\mathfrak{D}_{G_{\mathbb{Q},S}}$ is representable and represented by the universal deformation
  ring 
 $\mathcal{R}_{G_{\mathbb{Q},S}}$ (with fixed determinant $\chi^{1-k}_{\mathrm{cyc}}$).
 If we consider $\Sigma$-ramified deformations, we impose unrestricted deformation conditions at primes in $\Sigma$.

We impose certain local conditions at $\ell \in S$ to cut out irrelevant deformations in $\mathcal{R}_{G_{\mathbb{Q},S}}$. 
Imposing these local conditions can be interpreted as defining relatively representable subfunctors $\mathfrak{D}(\ell)$ of the universal deformation functor $\mathfrak{D}_{G_{\mathbb{Q}_\ell}}$ which classifies the deformations of $\overline{\rho}\vert_{G_{\mathbb{Q}_\ell}}$ with determinant $\chi^{1-k}_{\mathrm{cyc}}$ (see \cite[$\S$19]{mazur-intro-deformation-theory} and \cite[$\S$3]{bockle-presentations} for the details on relatively representable functors, noting that $\mathcal{C}_{\mathcal{O}}$ in \cite{bockle-presentations} denotes our  $\mathrm{CNL}_{\mathcal{O}}$).
We note that since $\overline{\rho}\vert_{G_{\mathbb{Q}_\ell}}$ may not satisfy $\mathrm{End}_{\mathbb{F}} (\overline{\rho}\vert_{G_{\mathbb{Q}_\ell}}) = \mathbb{F}$, we will only have relatively representable functors for local deformations in general. 
Given $\mathfrak{D}(\ell) \subseteq \mathfrak{D}_{G_{\mathbb{Q}_\ell}}$ as above for $\ell \in S$,  
a subfunctor $\mathfrak{D}(S): \mathrm{CNL}_{\mathcal{O}} \to \mathrm{Sets}$ of $\mathfrak{D}_{G_{\mathbb{Q},S}}$ is defined by the cartesian diagram of functors
\[
\xymatrix{
\mathfrak{D}(S) \ar[r]  \ar[d] 
 & \prod_{\ell \in S}\mathfrak{D}(\ell) \ar[d] \\
\mathfrak{D}_{G_{\mathbb{Q},S}} \ar[r]^-{``\mathrm{res}"} &  \prod_{\ell \in S} \mathfrak{D}_{G_{\mathbb{Q}_\ell}},
}
\]
where $``\mathrm{res}"$ is the natural transformation arising from the restriction of $G_{\mathbb{Q},S}\to \mathrm{GL}_2(\mathbb{F})$ to $G_{\Q_\ell}\to \mathrm{GL}_2(\mathbb{F})$.
More explicitly, for $R\in \mathrm{CNL}_{\mathcal{O}}$, $\mathfrak{D}(S)(R)$ is the set of deformations $\rho : G_{\mathbb{Q},S} \to \mathrm{GL}_2(R)$ contained in $\mathfrak{D}_{G_{\mathbb{Q},S}}(R)$
such that the restriction $\rho\vert_{G_{\mathbb{Q}_\ell}} : G_{\mathbb{Q}_\ell} \to \mathrm{GL}_2(R)$ satisfies the local deformation condition imposed by $\mathfrak{D}(\ell)(R)$ for all $\ell \in S$ \cite[$\S$23]{mazur-intro-deformation-theory}.
 By \cite[Proposition 3.4]{bockle-presentations}, $\mathfrak{D}(S)$ is also representable, and the deformation ring corresponding to $\mathfrak{D}(S)$ is denoted by $\mathcal{R}(S)$.

The tangent space of the deformation functors can be described in terms of adjoint Selmer groups.
For $\ell \in S$, let $\rho_\ell \in \mathfrak{D}_{G_{\Q_\ell}}(R)$
 be a deformation of $\overline{\rho}\vert_{G_{\mathbb{Q}_\ell}}$ to a finite-length ring $R \in \mathrm{CNL}_{\mathcal{O}}$.
Then the \textbf{tangent space $t_{\rho_\ell}$ of  $\mathfrak{D}_{G_{\mathbb{Q}_\ell}}$ at $\rho_\ell \in \mathfrak{D}_{G_{\Q_\ell}}(R)$} is defined by the inverse image of $\lbrace \rho_\ell \rbrace$ under the map $\mathfrak{D}_{G_{\Q_\ell}}(R[\varepsilon]/(\varepsilon^2))\to \mathfrak{D}_{G_{\Q_\ell}}(R)$ induced by $\varepsilon\mapsto 0$. Then it naturally has a structure of $R$-module, and there exists a natural isomorphism 
\begin{equation}\label{eqn:isom-tangent}
t_{\rho_\ell} \cong  \mathrm{H}^1(\mathbb{Q}_\ell, \mathrm{ad}^0(\rho_\ell))
\end{equation}
 (cf. \cite[\S 21]{mazur-intro-deformation-theory}).
Thus, $\mathfrak{D}(\ell)$ gives a submodule $\mathfrak{D}(\ell)(R[\varepsilon]/(\varepsilon^2))\cap t_{\rho_\ell}$ of 
$t_{\rho_\ell},$
and via (\ref{eqn:isom-tangent}) it also give a subspace of $ \mathrm{H}^1(\mathbb{Q}_\ell, \mathrm{ad}^0(\rho_{\ell}))$
which depends on $\rho_\ell$.

Let $\rho: G_{\mathbb{Q},S } \to \mathrm{GL}_2(\mathcal{O}) \in \mathfrak{D}(S)(\mathcal{O}),$
and let $\pi_S : \mathcal{R}(S) \to \mathcal{O}$ be the corresponding specialization with kernel denoted by
 $\wp_{\pi_S}$.
Let $\rho_n : G_{\mathbb{Q},S} \to \mathrm{GL}_2(\mathcal{O}/ \lambda^n)$ be the mod $\lambda^n$ reduction of $\rho$.
The restriction of $\rho_n$ to $G_{\mathbb{Q}_\ell}$ yields a subgroup $L_{\ell,n} \subseteq \mathrm{H}^1(\mathbb{Q}_\ell, \mathrm{ad}^0(\rho_n))$ as explained above, and the direct limit defines $L_\ell \subseteq \mathrm{H}^1(\mathbb{Q}_\ell, \mathrm{ad}^0(\rho) \otimes E/\mathcal{O})$.
Then $\mathcal{L} := (L_\ell)_{\ell \in S}$ becomes a Selmer structure for $S$ and $\mathrm{ad}^0(\rho) \otimes E/\mathcal{O}$ and we have an isomorphism
\begin{equation} \label{eqn:tangent-selmer}
\mathrm{Hom}_{\mathcal{O}}( \wp_{\pi_S} / \wp^2_{\pi_S} , E/\mathcal{O} ) \simeq \mathrm{Sel}_{\mathcal{L}} (G_{\mathbb{Q},S}, \mathrm{ad}^0(\rho) \otimes E/\mathcal{O})
\end{equation}
as in \cite[Proposition 1.2]{wiles} and \cite[$\S$28]{mazur-intro-deformation-theory}.

Let $\mathcal{C}_{\mathcal{O}}$ be the category whose objects are pairs $(R, \pi_R)$ where $R$ is an object of $\mathrm{CNL}_{\mathcal{O}}$ and $\pi_R: R \to \mathcal{O}$ is a surjective local $\mathcal{O}$-algebra homomorphism, and whose morphisms are morphisms in $\mathrm{CNL}_{\mathcal{O}}$ which commute with $\pi_R$'s.
For a pair $(R, \pi_R)$ in $\mathcal{C}_{\mathcal{O}}$, write $\wp_{\pi_R} := \mathrm{ker} \ \pi_R$.
As indicated in (\ref{eqn:tangent-selmer}), the \textbf{cotangent space to $\mathrm{Spec} \ R$ at $\wp_{\pi_R}$} is defined by
$$\Phi_{\pi_R} := \wp_{\pi_R} / \wp^2_{\pi_R}$$
and
the \textbf{congruence ideal of $(R, \pi_R)$} by
$\eta_{\pi_R} := \pi_R \left( \mathrm{Ann}_{R}(\wp_{\pi_R}) \right) \subseteq \mathcal{O} $.

The relation between the cotangent space and the congruence ideal is now well-known as follows:
\begin{prop} \label{prop:complete_intersection_congruence_ideals}
Let $(R, \pi_R) \in \mathcal{C}_{\mathcal{O}}$ such that $R$ is a finite flat $\mathcal{O}$-algebra and suppose that $\eta_R \neq 0$.
Then $R$ is a complete intersection if and only if $\mathrm{Fitt}_{\mathcal{O}} ( \Phi_{\pi_R} ) = \eta_{\pi_R}$.
\end{prop}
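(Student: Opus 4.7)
The plan is to invoke the numerical criterion of Wiles, as refined by Lenstra. Concretely, the criterion asserts that for any $(R,\pi_R)\in\mathcal{C}_\mathcal{O}$ with $R$ finite flat over $\mathcal{O}$ and $\eta_{\pi_R}\neq 0$, one always has the inclusion $\eta_{\pi_R}\subseteq\Fitt_\mathcal{O}(\Phi_{\pi_R})$, with equality precisely when $R$ is a complete intersection over $\mathcal{O}$.

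The set-up would proceed as follows. Since $R$ is finite flat over the DVR $\mathcal{O}$ and $\pi_R: R \twoheadrightarrow \mathcal{O}$ is split as a map of $\mathcal{O}$-modules, we get an $\mathcal{O}$-module decomposition $R \simeq \mathcal{O} \oplus \wp_{\pi_R}$. Setting $J := \mathrm{Ann}_R(\wp_{\pi_R})$, the hypothesis $\eta_{\pi_R} \neq 0$ together with the $\mathcal{O}$-flatness of $R$ forces $J \cap \wp_{\pi_R} = 0$: any element in the intersection has square zero and therefore vanishes after passing to the total ring of fractions. Hence $\pi_R$ induces an isomorphism $J \xrightarrow{\sim} \eta_{\pi_R}$ of $\mathcal{O}$-modules. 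The general inclusion $\eta_{\pi_R} \subseteq \Fitt_\mathcal{O}(\Phi_{\pi_R})$ would then follow by choosing a minimal presentation $R \simeq \mathcal{O}[[X_1,\ldots,X_n]]/\mathfrak{a}$ with $n = \dim_\mathbb{F}\Phi_{\pi_R}$ and $\pi_R(X_i) = 0$, and computing the Fitting ideal from the Jacobian matrix of any system of generators of $\mathfrak{a}$.

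For the complete-intersection characterization, I would argue that $R$ is a complete intersection if and only if $\mathfrak{a}$ is generated by a regular sequence of length $n$; in that case the Jacobian calculation recovers $\eta_{\pi_R}$ exactly as $\Fitt_\mathcal{O}(\Phi_{\pi_R})$, and conversely the numerical equality combined with a duality argument (Gorenstein-ness being equivalent to the complete-intersection property in this restricted setting of finite flat $\mathcal{O}$-algebras over a DVR) forces the minimal number of relations to coincide with $n$. The main obstacle is this Gorenstein-to-complete-intersection implication; since it is well-documented, I would simply cite \cite[Appendix, Proposition]{wiles} together with the streamlined modern treatment in \cite[$\S$5.2, Criterion I]{ddt} rather than reproduce the argument.
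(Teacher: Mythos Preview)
Your approach is the same as the paper's: the paper simply cites \cite[Corollary 10]{lenstra-complete-intersections}, i.e.\ the Wiles--Lenstra numerical criterion that you also plan to invoke via \cite{wiles} and \cite[\S5.2]{ddt}. So as a proof-by-citation your proposal is fine.

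However, two points in your sketch are wrong and should be fixed before you write anything down. First, the a~priori inclusion goes the other way: one always has $\Fitt_{\mathcal{O}}(\Phi_{\pi_R}) \subseteq \eta_{\pi_R}$, equivalently $\#\Phi_{\pi_R} \geq \#(\mathcal{O}/\eta_{\pi_R})$ (this is \cite[Theorem 5.3]{ddt}), not $\eta_{\pi_R} \subseteq \Fitt_{\mathcal{O}}(\Phi_{\pi_R})$ as you wrote. Second, your parenthetical that ``Gorenstein-ness is equivalent to the complete-intersection property in this restricted setting'' is false: there exist Gorenstein finite flat local $\mathcal{O}$-algebras that are not complete intersections, and Lenstra's proof of the implication ``numerical equality $\Rightarrow$ complete intersection'' does not pass through Gorenstein-ness at all but through a direct Fitting-ideal argument. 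A smaller issue: your argument that $J \cap \wp_{\pi_R} = 0$ because squares vanish ``after passing to the total ring of fractions'' presupposes that $R$ is reduced, which is not assumed.
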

\begin{proof}
See \cite[Corollary 10]{lenstra-complete-intersections}.
\end{proof}
\begin{rem} \label{rem:complete_intersection_congruence_ideals}
Since $\mathrm{Fitt}_{\mathcal{O}} ( \Phi_{\pi_R} ) = \left( \lambda^{\mathrm{length}_{\mathcal{O}} (\Phi_{\pi_R})} \right)$, $\mathrm{ord}_{\lambda} (\eta_{\pi_R})$ can be written in terms of $\mathrm{length}_{\mathcal{O}} (\Phi_{\pi_R})$.
\end{rem}
\subsection{Local deformation conditions} \label{subsec:local_deformation_conditions}
For a prime $\ell$, let $\mathcal{R}(\ell)$ be the local \emph{versal} deformation ring of $\overline{\rho}\vert_{G_{\mathbb{Q}_\ell}}$ \emph{relatively representing} the deformation functor $\mathfrak{D}(\ell)$ corresponding to the local Selmer condition $L_{\ell,1} \subseteq \mathrm{H}^1(\mathbb{Q}_\ell, \mathrm{ad}^0(\overline{\rho}))$. The existence of versal deformation rings follows from \cite[Theorem 2.11]{schlessinger-criterion} even without the $\mathrm{End}_{\mathbb{F}} (\overline{\rho}\vert_{G_{\mathbb{Q}_\ell}}) = \mathbb{F}$ condition, and they are determined only up to non-canonical isomorphisms.
Thanks to \cite[Theorem 1.2.(i)]{bockle-presentations}, 
$\mathcal{R}(\ell)$ admits presentation
$$\mathcal{R}(\ell) \simeq \mathcal{O} \llbracket x_1, \cdots, x_r \rrbracket / \mathfrak{a}_\ell $$
where $r = \mathrm{dim}_{\mathbb{F}} L_{\ell,1}$. 
It is known that the number of generators of $\mathfrak{a}_\ell$ is bounded by the dimension of $\mathrm{H}^2(\mathbb{Q}_\ell, \mathrm{ad}^0(\overline{\rho}))$.
Denote by $\mathrm{gen}(\mathfrak{a}_\ell)$ the (minimal) number of generators of $\mathfrak{a}_\ell$.

We quickly review the useful local deformation problems (cf.~\cite[P1-P7]{taylor-icosahedral-2}, \cite[Definition 2.2.2]{clozel-harris-taylor}) and discuss vanishing of $\mathfrak{a}_\ell$. In \cite{clozel-harris-taylor}, the deformation problems are considered without fixing determinants, but it does not cause any problem in our setting since $p >2$.

Although we explicitly write down the local Selmer conditions only for $\mathrm{ad}^0(\overline{\rho})$ in this section,
they easily generalize to $\mathrm{ad}^0(\rho_n)$ for all $n \geq 1$.
\subsubsection{Low weight crystalline}
We first recall some of the Fontaine--Laffaille theory \cite{fontaine-laffaille}, \cite[$\S$1.1.2]{diamond-flach-guo}.
Let $\mathcal{MF}$ be the category of filtered $\varphi$-modules whose objects are
finitely generated $\mathcal{O}$-modules $M$ equipped with
\begin{itemize}
\item 
a decreasing filtration such that $\mathrm{Fil}^a M = M$ and $\mathrm{Fil}^b M = 0$ for some $a, b \in \mathbb{Z}$, and for each $i \in \mathbb{Z}$, $\mathrm{Fil}^i M$ is a direct summand of $M$;
\item 
$\mathcal{O}$-linear maps $\varphi^i : \mathrm{Fil}^i M \to M$ for $i \in \mathbb{Z}$ satisfying $\varphi^i \vert_{  \mathrm{Fil}^{i+1} M }  = p \cdot \varphi^{i+1}$ 
and $M = \sum_i \mathrm{Im}(\varphi^i)$.
\end{itemize}
It is known that $\mathcal{MF}$ is an abelian category.
Denote by $\mathcal{MF}^a$ the full subcategory of  $\mathcal{MF}$ consisting of objects $M$ satisfying 
$\mathrm{Fil}^a M = M$ and $\mathrm{Fil}^{a+p} M = 0$ and having no non-trivial quotients $M'$ with $\mathrm{Fil}^{a+p} M' = M'$.
Denote by $\mathcal{MF}^a_{\mathrm{tor}}$ the full subcategory of $\mathcal{MF}^a$ consisting of objects of finite length.
Also,  $\mathcal{MF}^a$ and $\mathcal{MF}^a_{\mathrm{tor}}$ are abelian categories and stable under taking subobjects, quotients, direct products, and extensions in  $\mathcal{MF}$.
Fontaine and Laffaille constructed the fully faithful contravariant functor from 
$\mathcal{MF}^0_{\mathrm{tor}}$ to the category of finite $\mathcal{O}[G_{\mathbb{Q}_p}]$-modules, which is called the Fontaine--Laffaille functor.

Let $R \in \mathcal{C}_\mathcal{O}$ with maximal ideal $\mathfrak{m}_R$ and $\rho\vert_{G_{\mathbb{Q}_p}} : G_{\mathbb{Q}_p} \to \mathrm{Aut}_{R}(M) \simeq \mathrm{GL}_2(R) $ be a deformation of $\overline{\rho}$, so $M$ is uniquely determined up to isomorphisms.
\begin{defn}[Low weight crystalline deformation] \label{defn:crys-low-wt}
A deformation $\rho\vert_{G_{\mathbb{Q}_p}} : G_{\mathbb{Q}_p} \to \mathrm{Aut}_{R}(M) \simeq \mathrm{GL}_2(R)$ is a \textbf{low weight crystalline deformation of $\overline{\rho}\vert_{G_{\mathbb{Q}_p}}$} if for every $n \geq 1$,  $M/\mathfrak{m}^n_R M$ lies in the image of the Fontaine--Laffaille functor on $\mathcal{MF}^0_{\mathrm{tor}}$.
\end{defn}
Let $\mathrm{H}^1_{f}(\mathbb{Q}_p, \mathrm{ad}^0(\overline{\rho})) := L_{p,1} \subseteq \mathrm{H}^1(\mathbb{Q}_p, \mathrm{ad}^0(\overline{\rho}))$ be the local condition at $p$ corresponding to the low weight crystalline deformations via isomorphism (\ref{eqn:isom-tangent}). We omit the precise definition of $L_{p,1}$. See \cite[$\S$2.1]{diamond-flach-guo}, \cite[$\S$2.4.1]{clozel-harris-taylor} for example.
Following \cite[Corollary 2.3]{diamond-flach-guo}, we have
$$\mathrm{dim}_{\mathbb{F}} L_{p,1} = \mathrm{H}^0(\mathbb{Q}_p, \mathrm{ad}^0(\overline{\rho})) +  1.$$
Furthermore, $\mathfrak{a}_p = 0$ by \cite[Lemma 2.4.1]{clozel-harris-taylor} (``liftable").

\subsubsection{Unramified}
Let $\ell \neq p$ and assume that $\overline{\rho}\vert_{G_{\mathbb{Q}_\ell}}$ is unramified; thus, $p \nmid \#\overline{\rho}(I_\ell)$.
Let $L_\ell$ be the local condition at $\ell$ corresponding to the deformation functor parametrizing all unramified deformations of $\overline{\rho}\vert_{G_{\mathbb{Q}_\ell}}$.
Then 
$$\mathrm{H}^1_{\mathrm{ur}}(G_{\mathbb{Q}_\ell}, \mathrm{ad}^0(\overline{\rho}) ) := L_{\ell,1} = \mathrm{H}^1(G_{\mathbb{Q}_\ell}/I_\ell, \mathrm{ad}^0(\overline{\rho})^{I_\ell}).$$
By \cite[E1]{taylor-icosahedral-2}, $\mathrm{H}^2(G_{\mathbb{Q}_\ell}/(I_\ell \cap \mathrm{ker}(\overline{\rho})), \mathrm{ad}^0(\overline{\rho})  ) = 0$.
It follows from \cite[Theorem 1.2.(iv)]{bockle-presentations} that $\mathfrak{a}_\ell = 0$.

\begin{lem} \label{lem:local_unramified}
Let $\ell$ be any prime (including $p$).
Let $M$ be a discrete $\mathcal{O}$-module endowed with continuous $G_{\mathbb{Q}_\ell}$-action.
For all $n \geq 1$, 
$$\# \mathrm{H}^1(G_{\mathbb{Q}_\ell}/I_\ell, M[\lambda^n]^{I_\ell}) = \#  \mathrm{H}^0(\mathbb{Q}_\ell, M[\lambda^n]) .$$
Taking the direct limit, we have
$$\mathrm{cork}_{\mathcal{O}} \ \mathrm{H}^1(G_{\mathbb{Q}_\ell}/I_\ell, M^{I_\ell})  = \mathrm{cork}_{\mathcal{O}}  \ \mathrm{H}^0(\mathbb{Q}_\ell, M).$$
\end{lem}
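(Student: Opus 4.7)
The plan is to reduce everything to the following elementary fact: if $\phi$ is an endomorphism of a finite abelian group $N$, then the exact sequence
\[
0 \to \ker(\phi-1) \to N \xrightarrow{\phi-1} N \to \mathrm{coker}(\phi-1) \to 0
\]
forces $\#\ker(\phi-1) = \#\mathrm{coker}(\phi-1)$ by multiplicativity of orders. The key observation is that $G_{\mathbb{Q}_\ell}/I_\ell$ is topologically generated by a Frobenius $\mathrm{Frob}_\ell$ and is isomorphic to $\widehat{\mathbb{Z}}$, so that for any finite discrete module $N$ with continuous $G_{\mathbb{Q}_\ell}/I_\ell$-action we have
\[
\mathrm{H}^0(G_{\mathbb{Q}_\ell}/I_\ell, N) = N^{\mathrm{Frob}_\ell = 1}, \qquad \mathrm{H}^1(G_{\mathbb{Q}_\ell}/I_\ell, N) = N/(\mathrm{Frob}_\ell - 1)N,
\]
so that their cardinalities agree.

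To deduce the finite-level statement, I would apply this to $N = M[\lambda^n]^{I_\ell}$, which is finite since $M$ is (in the intended applications) cofinitely generated over $\mathcal{O}$. Since the $I_\ell$-invariants of the $\lambda^n$-torsion are again the $\lambda^n$-torsion of the $I_\ell$-invariants, and the further $G_{\mathbb{Q}_\ell}/I_\ell$-invariants coincide with the full $G_{\mathbb{Q}_\ell}$-invariants, we obtain
\[
\mathrm{H}^0(G_{\mathbb{Q}_\ell}/I_\ell, M[\lambda^n]^{I_\ell}) = M[\lambda^n]^{G_{\mathbb{Q}_\ell}} = \mathrm{H}^0(\mathbb{Q}_\ell, M[\lambda^n]),
\]
and the desired equality of cardinalities with $\mathrm{H}^1(G_{\mathbb{Q}_\ell}/I_\ell, M[\lambda^n]^{I_\ell})$ follows.

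For the corank statement, rather than pass $\lambda$-torsion through $\mathrm{H}^1$ (which is slightly delicate), I would work directly with $M^{I_\ell}$ and use additivity of $\mathcal{O}$-coranks along the four-term exact sequence
\[
0 \to (M^{I_\ell})^{\mathrm{Frob}_\ell = 1} \to M^{I_\ell} \xrightarrow{\mathrm{Frob}_\ell - 1} M^{I_\ell} \to M^{I_\ell}/(\mathrm{Frob}_\ell - 1)M^{I_\ell} \to 0.
\]
Splitting this into two short exact sequences and adding the resulting corank identities yields $\mathrm{cork}_{\mathcal{O}}(\ker) = \mathrm{cork}_{\mathcal{O}}(\mathrm{coker})$, which translates into the desired equality since the kernel is $M^{G_{\mathbb{Q}_\ell}} = \mathrm{H}^0(\mathbb{Q}_\ell, M)$ and the cokernel is $\mathrm{H}^1(G_{\mathbb{Q}_\ell}/I_\ell, M^{I_\ell})$. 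There is no serious obstacle; the only point requiring a brief sentence is the compatibility between taking $I_\ell$-invariants and $\lambda^n$-torsion at finite level, and the identification of $\mathrm{H}^1$ of the profinite group $G_{\mathbb{Q}_\ell}/I_\ell$ acting on a discrete module with the coinvariants of $\mathrm{Frob}_\ell$, which is standard because cohomology of a profinite group with discrete coefficients commutes with direct limits.
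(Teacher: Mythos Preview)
Your proof is correct and follows essentially the same approach as the paper: compare the kernel and cokernel of $\mathrm{Frob}_\ell - 1$ acting on the finite module $M[\lambda^n]^{I_\ell}$. The paper's proof is a single sentence to this effect, whereas you have helpfully spelled out the identifications of $\mathrm{H}^0$ and $\mathrm{H}^1$ with the kernel and cokernel and handled the corank statement separately via additivity in the four-term exact sequence.
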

\begin{proof}
Comparing the kernel and the cokernel of 
$\mathrm{Frob}_\ell -1 : M[\lambda^n]^{I_\ell} \to  M[\lambda^n]^{I_\ell}$, we obtain the conclusion.
\end{proof}

\subsubsection{Unrestricted}
Let $\ell$ be a prime not dividing $p$.
It is easy to see that
$L_{\ell,1} = \mathrm{H}^1(\mathbb{Q}_\ell, \mathrm{ad}^0(\overline{\rho}))$
corresponds to the unrestricted deformations of $\overline{\rho}\vert_{G_{\mathbb{Q}_\ell}}$.
By the computation in \cite[Example 5.1.(i)]{bockle-presentations}, we have 
$$\mathrm{dim}_{\mathbb{F}}L_{\ell,1} - \mathrm{dim}_{\mathbb{F}} \mathrm{H}^1(\mathbb{Q}_\ell, \mathrm{ad}^0(\overline{\rho})) - \mathrm{gen}(\mathfrak{a}_\ell) \geq 0.$$
\subsubsection{Minimally ramified}
Let $\ell$ be a prime dividing $N(\overline{\rho})$.
Following \cite[$\S$3.1]{diamond-flach-guo} and \cite[$\S$3.2]{lundell-level-lowering}, we have the following definition.
\begin{defn} \label{defn:minimally_ramified}
A deformation $\rho\vert_{G_{\mathbb{Q}_\ell}}$ of $\overline{\rho}\vert_{G_{\mathbb{Q}_\ell}}$ to $R \in \mathrm{CNL}_{\mathcal{O}}$ is \textbf{minimally ramified} if it satisfies:
\begin{enumerate}
\item If  $p \nmid \#\overline{\rho} (I_\ell)$, then $\# \rho (I_\ell) = \# \overline{\rho}(I_\ell)$. 
\item If  $p \mid \#\overline{\rho} (I_\ell)$, then  $\rho\vert_{I_\ell}$ has unramified quotient of rank one.
\end{enumerate}
\end{defn}
The corresponding Selmer local condition at $\ell$ is denoted by $\mathrm{H}^1_{\mathrm{min}}(\mathbb{Q}_\ell, \mathrm{ad}^0(\overline{\rho})) := L_{\ell, 1}$.

Suppose that $p \nmid \#\overline{\rho} (I_\ell)$.
Then the minimally ramified liftings coincide with unramified liftings by \cite[Lemma 2.4.22]{clozel-harris-taylor}.
By applying Lemma \ref{lem:local_unramified}, $\mathrm{dim}_{\mathbb{F}} L_{\ell,1} = \mathrm{dim}_{\mathbb{F}} \mathrm{H}^0(\mathbb{Q}_\ell, \mathrm{ad}^0(\overline{\rho}))$.
By applying \cite[E1]{taylor-icosahedral-2} again, we have $\mathfrak{a}_\ell = 0$.

Suppose that $p \mid \#\overline{\rho} (I_\ell)$.
Then the minimally ramified liftings coincide with\cite[E3]{taylor-icosahedral-2}. The relevant definition and computation are given in the $\ell$-new deformation case.
\subsubsection{New} \label{subsubsec:new}
We closely follow \cite[E3]{taylor-icosahedral-2}.
Let $\ell$ be a prime not equal to $p$.
\begin{assu}
We assume one of the following conditions:
\begin{enumerate}
\item $\ell \not\equiv 1 \pmod{p}$, or 
\item $p \mid \#\overline{\rho} (G_{\mathbb{Q}_\ell})$
\end{enumerate}
\end{assu}
\begin{defn} \label{defn:new_deformations}
\begin{enumerate}
\item The first case corresponds to the type \textbf{P} in \cite[$\S$2]{diamond_extension_wiles} and 
its deformation following \cite[E3]{taylor-icosahedral-2} is called a \textbf{$\ell$-new deformation}.
\item The second case corresponds to the type \textbf{S} in \cite[$\S$2]{diamond_extension_wiles}
and its deformation following \cite[E3]{taylor-icosahedral-2} is called a \textbf{minimally ramified deformation} (of type (2) in Definition \ref{defn:minimally_ramified}).
\end{enumerate}
\end{defn}
In both cases, following \cite[$\S$2]{diamond_extension_wiles} and \cite[$\S$4.2.5]{hida-geometric-modular-forms}, there exists a choice of basis of $\mathbb{F}^2$ 
such that 
$\overline{\rho}\vert_{G_{\mathbb{Q}_\ell}} \sim \begin{pmatrix}
\mathbf{1} & \overline{\xi} \\ 0 & \overline{\chi}^{-1}_{\mathrm{cyc}}
\end{pmatrix} \otimes \overline{\chi}^{1 - k/2}_{\mathrm{cyc}}$
 where $\overline{\xi} \in Z^1(\mathbb{Q}_\ell, \mathbb{F}(1-k/2))$.
Since the universal deformation rings of equivalent representations up to a character twist are canonically isomorphic \cite[Proposition 1]{mazur-deforming}, we may assume
$\overline{\rho}\vert_{G_{\mathbb{Q}_\ell}} \sim \begin{pmatrix}
\mathbf{1} & \overline{\xi} \\ 0 & \overline{\chi}^{-1}_{\mathrm{cyc}}
\end{pmatrix}$
without loss of generality.

We consider the collection of the deformations $\rho \vert_{G_{\mathbb{Q}_\ell}}$ of the following form
$\begin{pmatrix}
\mathbf{1} & \xi \\ 0 & \chi^{-1}_{\mathrm{cyc}}
\end{pmatrix}$.
In order to define the local condition $L_{\ell,1} \subseteq \mathrm{H}^1(\mathbb{Q}_\ell, \mathrm{ad}^0(\overline{\rho}))$ corresponding to the deformations,
we first explicitly describe how $\mathrm{ad}^0( \rho  ) \vert_{G_{\mathbb{Q}_\ell}}$ looks like.
A straightforward computation shows that 
\begin{equation}\label{simple-computation}
\mathrm{ad}^0( \rho  ) \vert_{G_{\mathbb{Q}_\ell}} \sim 
\begin{pmatrix}
\chi_{\mathrm{cyc}} & - 2\xi \chi_{\mathrm{cyc}} & -\xi^2 \chi_{\mathrm{cyc}} \\
0 & \mathbf{1}  & \xi  \\
0 & 0 &  \chi^{-1}_{\mathrm{cyc}}
\end{pmatrix}
 . 
 \end{equation}
Let $M = \mathrm{ad}^0(\rho) \vert_{G_{\mathbb{Q}_\ell}} \otimes E/\mathcal{O}$ be the discrete $G_{\mathbb{Q}_\ell}$-module corresponding to the matrix form above and consider $G_{\mathbb{Q}_\ell}$-stable filtration
$$0 \subsetneq M_2 \subsetneq M_1 \subsetneq M_0 = M$$
induced by (\ref{simple-computation}).

Let $\rho_{n} \vert_{G_{\mathbb{Q}_\ell}}$ be the mod $\lambda^n$ reduction of $\rho \vert_{G_{\mathbb{Q}_\ell}}$ and
$M[\lambda^n]$ be the $\lambda^n$-torsion of $M$. Then we have
$\mathrm{ad}^0(\rho_{n} \vert_{G_{\mathbb{Q}_\ell}}) \otimes E/\mathcal{O} \simeq \mathrm{ad}^0(\rho \vert_{G_{\mathbb{Q}_\ell}}) \otimes \lambda^{-n}\mathcal{O}/\mathcal{O}$.
Note that the ramification of $\rho_{n} \vert_{G_{\mathbb{Q}_\ell}}$ is completely controlled by the 1-cocycle $\xi \pmod{\lambda^n}$.
More explicitly, considering the equation
\begin{equation} \label{eqn:ad^0_f_explicit_matrix}
\begin{pmatrix}
\chi_{\mathrm{cyc}} & - 2\xi \chi_{\mathrm{cyc}} & -\xi^2 \chi_{\mathrm{cyc}} \\
0 & \mathbf{1}  & \xi  \\
0 & 0 &  \chi^{-1}_{\mathrm{cyc}}
\end{pmatrix} 
\begin{pmatrix}
a \\
b \\
c
\end{pmatrix}
=
\begin{pmatrix}
\chi_{\mathrm{cyc}} \cdot a  - 2\xi \chi_{\mathrm{cyc}} \cdot b  -\xi^2 \chi_{\mathrm{cyc}} \cdot c \\
 b + \xi  \cdot c \\
\chi^{-1}_{\mathrm{cyc}} \cdot c
\end{pmatrix} ,
\end{equation}
we can explicitly observe that $M_2$ is generated by ${}^{t}\begin{pmatrix}
1 &
0 &
0
\end{pmatrix}
$ and $M_1$ is generated by 
${}^{t}\begin{pmatrix}
1 &
0 &
0
\end{pmatrix}$
and 
${}^{t}\begin{pmatrix}
0 &
1 &
0
\end{pmatrix}$.
Thus, we have $\mathrm{H}^1(\mathbb{Q}_\ell, M_2) \simeq \mathrm{H}^1(\mathbb{Q}_\ell, E/\mathcal{O}(1))$.
The local Selmer condition at $\ell$ corresponding to the deformations above is defined by
$$\mathrm{H}^1_{\mathrm{new}} (\mathbb{Q}_\ell, \mathrm{ad}^0(\overline{\rho}) ) = L_{\ell, 1} := \mathrm{Im} \left(  \mathrm{H}^1(\mathbb{Q}_\ell, M_2[\lambda]) \to  \mathrm{H}^1(\mathbb{Q}_\ell, \mathrm{ad}^0(\overline{\rho})) \right) \subseteq  \mathrm{H}^1(\mathbb{Q}_\ell, \mathrm{ad}^0(\overline{\rho})) $$ 
when $\overline{\rho}$ is unramified at $\ell$.
Even when $\overline{\rho}$ is ramified at $\ell$, we  also denote it by
$ \mathrm{H}^1_{\mathrm{new}} (\mathbb{Q}_\ell, \mathrm{ad}^0(\overline{\rho}) ) =  \mathrm{H}^1_{\mathrm{min}} (\mathbb{Q}_\ell, \mathrm{ad}^0(\overline{\rho}) )$ for convenience.

\begin{lem}
Suppose that $L_{\ell, 1}$ is the local Selmer condition corresponding to $\ell$-new or minimally ramified deformations as in Definition \ref{defn:new_deformations}.
Then
\begin{enumerate}
\item $\mathrm{dim}_{\mathbb{F}} L_{\ell, 1} = \mathrm{dim}_{\mathbb{F}} \mathrm{H}^1(\mathbb{Q}_\ell , \mathrm{ad}^0(\overline{\rho}))$, and
\item  the versal deformation ring of $\overline{\rho}\vert_{G_{\mathbb{Q}_\ell}}$ corresponding to $L_{\ell, 1}$ is smooth over $\mathcal{O}$, i.e. unobstructed.
\end{enumerate}
\end{lem}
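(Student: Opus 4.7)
The plan is to establish both claims case-by-case according to the two possibilities in Definition~\ref{defn:new_deformations}, using the filtration $0 \subsetneq M_2 \subsetneq M_1 \subsetneq M = \mathrm{ad}^0(\overline{\rho})$ with graded pieces $\mathbb{F}(1), \mathbb{F}, \mathbb{F}(-1)$ identified from the explicit matrix action in (\ref{eqn:ad^0_f_explicit_matrix}).

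For (1), I would compute $\dim_{\mathbb{F}} H^1(\mathbb{Q}_\ell, \mathrm{ad}^0(\overline{\rho}))$ using local Tate duality and the local Euler characteristic formula: since $\ell \neq p$ the Euler characteristic vanishes and
\[
\dim_{\mathbb{F}} H^1(\mathbb{Q}_\ell, \mathrm{ad}^0(\overline{\rho})) \;=\; \dim_{\mathbb{F}} H^0(\mathbb{Q}_\ell, \mathrm{ad}^0(\overline{\rho})) + \dim_{\mathbb{F}} H^0(\mathbb{Q}_\ell, \mathrm{ad}^0(\overline{\rho})(1)).
\]
Both invariants are read off case-by-case from the fixed vectors of the action displayed in (\ref{eqn:ad^0_f_explicit_matrix}), exploiting the hypothesis $\ell \not\equiv 1 \pmod p$ with $\overline{\rho}$ unramified in Case~1, and the nontrivial unipotent ramification on $I_\ell$ in Case~2. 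For $L_{\ell,1}$ I would use the long exact sequence attached to $0 \to M_2[\lambda] \to \mathrm{ad}^0(\overline{\rho}) \to \mathrm{ad}^0(\overline{\rho})/M_2[\lambda] \to 0$ to express $\dim_{\mathbb{F}} L_{\ell,1}$ as $\dim_{\mathbb{F}} H^1(\mathbb{Q}_\ell, M_2[\lambda])$ minus the rank of the connecting map from $H^0(\mathbb{Q}_\ell, \mathrm{ad}^0(\overline{\rho})/M_2[\lambda])$, each of which is again computable from the filtration once $H^0$ of the graded pieces are known. Matching the two sides then becomes an explicit dimension count.

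For (2), I would verify the vanishing of the obstruction ideal $\mathfrak{a}_\ell$ in the presentation $\mathcal{R}(\ell) \simeq \mathcal{O}\llbracket x_1, \ldots, x_r\rrbracket/\mathfrak{a}_\ell$ recalled in \S\ref{subsec:local_deformation_conditions}. Since any allowed deformation has the shape $\bigl(\begin{smallmatrix}\chi_{\mathrm{cyc}} & \xi \\ 0 & \mathbf{1}\end{smallmatrix}\bigr)$, unobstructedness reduces to lifting the upper-right cocycle $\xi_n \in Z^1(\mathbb{Q}_\ell, A_n(1))$ to a cocycle $\xi_{n+1} \in Z^1(\mathbb{Q}_\ell, A_{n+1}(1))$ along any surjection $A_{n+1} \twoheadrightarrow A_n$ in $\mathrm{CNL}_{\mathcal{O}}$. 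The a priori obstruction lies in $H^2(\mathbb{Q}_\ell, I(1))$ for $I = \ker(A_{n+1} \to A_n)$, and I would exhibit it as a coboundary: given a set-theoretic lift $\widetilde{\xi}_{n+1}$ of $\xi_n$, the obstruction $2$-cocycle $d\widetilde{\xi}_{n+1}$ can be killed by adjusting $\widetilde{\xi}_{n+1}$ by an appropriate $1$-cochain, crucially using the rigidity of the prescribed diagonal characters $(\chi_{\mathrm{cyc}}, \mathbf{1})$.

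The main obstacle I anticipate is the bookkeeping in the boundary subcases of (1): namely Case~1 with $\ell \equiv -1 \pmod p$, where the graded piece $\mathbb{F}(2)$ of $\mathrm{ad}^0(\overline{\rho})(1)$ becomes trivial and contributes an extra dimension to the right-hand side, and Case~2 with $\ell \equiv 1 \pmod p$, where the ramification of $\overline{\rho}$ on $I_\ell$ produces additional invariants. In each such subcase one must verify that the extra cohomology on the right of (1) is exactly matched by additional image on the left coming from the connecting maps in the filtration — this delicate numerical matching, rather than any single slick argument, is the substance of the proof.
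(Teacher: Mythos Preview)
Your proposal is correct in spirit and outlines essentially the computation one carries out to prove this lemma, but it takes a genuinely different route from the paper: the paper's entire proof is the single line ``See \cite[E3]{taylor-icosahedral-2}'', deferring both assertions to Taylor's classification of the local deformation problem at a Steinberg prime. Your plan amounts to re-deriving the content of that reference from scratch.

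What you do differently: you propose to compute $\dim_{\mathbb{F}} H^1(\mathbb{Q}_\ell,\mathrm{ad}^0(\overline{\rho}))$ via the Euler--Poincar\'e formula and local duality, compute $\dim_{\mathbb{F}} L_{\ell,1}$ via the long exact sequence attached to $0\to M_2[\lambda]\to\mathrm{ad}^0(\overline{\rho})\to\mathrm{ad}^0(\overline{\rho})/M_2[\lambda]\to 0$, and match the resulting numbers case by case. For (2) you propose explicit cocycle lifting with obstruction in $H^2(\mathbb{Q}_\ell,I(1))$. This is exactly the calculation underlying \cite[E3]{taylor-icosahedral-2}, so your approach buys self-containedness at the cost of considerable bookkeeping, while the paper's citation buys brevity at the cost of relying on the reader knowing Taylor's paper. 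The boundary subcases you flag ($\ell\equiv -1\pmod p$ in Case~1, $\ell\equiv 1\pmod p$ in Case~2) are precisely where the two sides pick up matching extra dimensions, and your identification of this as the crux is accurate.

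One suggestion on (2): rather than chasing the obstruction cocycle, it is cleaner to observe directly that deformations of the prescribed shape $\bigl(\begin{smallmatrix}\chi_{\mathrm{cyc}} & \xi \\ 0 & \mathbf{1}\end{smallmatrix}\bigr)$ are parametrised by the cocycle $\xi$ alone, so the functor is visibly formally smooth---this is how Taylor packages the argument, and it sidesteps the need to verify that a nonzero obstruction \emph{space} nonetheless carries a trivial obstruction \emph{class}.
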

\begin{proof}
See \cite[E3]{taylor-icosahedral-2}.
\end{proof}
\subsection{The deformation ring and the adjoint Selmer group}
Putting all the local deformation conditions discussed together, we define the following deformation ring.

Let $\Sigma^+$ and $\Sigma^-$ be two finite sets of primes $\ell (\neq p)$ such that $\Sigma^+ \cap \Sigma^- = \emptyset$. 
\begin{defn}[$\Sigma^+$-ramified $\Sigma^-$-new deformation rings]
Let $\mathfrak{D}^{\Sigma^-}_\Sigma$ be the deformation functor satisfying the following local deformation conditions:
\begin{enumerate}
\item The local deformation at $p$ is a low weight crystalline deformation (Definition \ref{defn:crys-low-wt});
\item At $\ell \not\in \Sigma^+ \cup \Sigma^-$, the local deformation at $\ell$ is minimally ramified;
\item At $\ell \in \Sigma^+$, the local deformation at $\ell$ is unrestricted;
\item At $\ell \in \Sigma^-$, the local deformation at $\ell$ is new if $\overline{\rho}$ is unramified at $\ell$;
\item At $\ell \in \Sigma^-$, the local deformation at $\ell$ is minimally ramified if $\overline{\rho}$ is ramified at $\ell$.
\end{enumerate}
The deformation ring representing the functor $\mathfrak{D}^{\Sigma^-}_{\Sigma^+}$ is called the \textbf{$\Sigma^+$-ramified $\Sigma^-$-new  deformation ring} and denoted by
$\mathcal{R}^{\Sigma^-}_{\Sigma^+}$, and denote by $\rho^{\Sigma^-}_{\Sigma^+}$ the corresponding representation. If $\Sigma^- = \emptyset$, we write
$\mathcal{R}_{\Sigma^+} = \mathcal{R}^{\emptyset}_{\Sigma^+}$ and $\rho_{\Sigma^+} = \rho^{\emptyset}_{\Sigma^+}$.
\end{defn}
\begin{rem}
The representability of $\mathfrak{D}^{\Sigma^-}_{\Sigma^+}$ is ensured by the argument described in $\S$\ref{subsec:local-global-deformation}.
See also \cite{khare-R=T, khare-ramakrishna} (depending on \cite{ribet-takahashi}), and \cite{yu-higher-khare-ramakrishna} for another description.
\end{rem}

Let $f$ be a newform such that $\rho_f$ is a $\Sigma^+$-ramified $\Sigma^-$-new deformation of $\overline{\rho}$.
Then we define the adjoint Selmer group of $f$ corresponding to the deformation problem following (\ref{eqn:tangent-selmer}) as follows. For notational convenience, write $M =\mathrm{ad}^0(f) \otimes E/\mathcal{O}$.
\begin{defn} \label{defn:adjoint_selmer}
The \textbf{$\Sigma^+$-imprimitive $\Sigma^-$-new adjoint Selmer group $\mathrm{Sel}^{\Sigma^-}_{\Sigma^+}(\mathbb{Q}, M)$ of $f$}
 is defined by the kernel of the natural restriction map
$$\phi^{\Sigma^-}_{\Sigma^+}  : \mathrm{H}^1(\mathbb{Q}_{S \cup \Sigma^+  \cup \Sigma^-}/\mathbb{Q}, M) \to \dfrac{\mathrm{H}^1(\mathbb{Q}_p, M)}{\mathrm{H}^1_{f}(\mathbb{Q}_p, M)} \oplus \bigoplus_{\ell \in S \setminus \Sigma^+} \dfrac{\mathrm{H}^1(\mathbb{Q}_\ell, M)}{\mathrm{H}^1_{\mathrm{min}}(\mathbb{Q}_\ell, M)} \oplus \bigoplus_{\ell \in \Sigma^- } \dfrac{\mathrm{H}^1(\mathbb{Q}_\ell, M)}{\mathrm{H}^1_{\mathrm{new}}(\mathbb{Q}_\ell, M)} .$$
\end{defn}

\subsection{A presentation of the deformation ring}
We quickly summarize \cite[$\S$5]{bockle-presentations}.
It turns out that many naturally defined local deformation conditions satisfy the following properties.
\begin{enumerate}
\item 
If $\ell$ does not divide $p$, then 
$$\mathrm{dim}_{\mathbb{F}} L_{\ell,1} - 
\mathrm{dim}_{\mathbb{F}} \mathrm{H}^0(\mathbb{Q}_\ell, \mathrm{ad}^0(\overline{\rho})) -
 \mathrm{gen}(\mathfrak{a}_\ell) \geq 0 .$$
\item If we impose a suitable semistable assumption on deformations at $p$, then 
$$\sum_{v \vert p}
\left( 
\mathrm{dim}_{\mathbb{F}} L_{p,1} - 
\mathrm{dim}_{\mathbb{F}} \mathrm{H}^0(\mathbb{Q}_p, \mathrm{ad}^0(\overline{\rho})) -
 \mathrm{gen}(\mathfrak{a}_p) 
\right) \geq 0 .$$
\end{enumerate}
Note that the condition at the infinite place is automatic since $\overline{\rho}$ is odd.
If we further assume that $\mathrm{gen}(\mathfrak{a}_p) \leq 1$, then the global deformation ring with these local constraints is a complete intersection. The precise statement is as follows.
\begin{thm}[B\"{o}ckle]
Let $\overline{\rho} : G_\mathbb{Q} \to \mathrm{GL}_2(\overline{\mathbb{F}}_p)$ be an odd, continuous, and absolutely irreducible Galois representation.
We assume the following conditions.
\begin{enumerate}
\item If $\ell$ does not divide $p$, then 
$$\mathrm{dim}_{\mathbb{F}} L_{\ell, 1} - 
\mathrm{dim}_{\mathbb{F}} \mathrm{H}^0(\mathbb{Q}_\ell, \mathrm{ad}^0(\overline{\rho})) -
 \mathrm{gen}(\mathfrak{a}_\ell) \geq 0 .$$
\item If $\ell = p$, then $\overline{\rho}\vert_{G_{\mathbb{Q}_p}}$ is low weight crystalline and
$$
\mathrm{dim}_{\mathbb{F}} L_{p,1} - 
\mathrm{dim}_{\mathbb{F}} \mathrm{H}^0(\mathbb{Q}_p, \mathrm{ad}^0(\overline{\rho})) -
 \mathrm{gen}(\mathfrak{a}_p) 
\geq 0 .$$
\item $\overline{\rho}$ satisfies Assumption \ref{assu:working_assumptions}.(TW).
\end{enumerate}
Then the corresponding deformation ring $\mathcal{R}$ is a complete intersection over $\mathcal{O}$, i.e.
$$\mathcal{R} \simeq \mathcal{O} \llbracket X_1, \cdots, X_n \rrbracket / (f_1, \cdots f_n)$$
for suitable $f_i \in \mathcal{O} \llbracket X_1, \cdots, X_n \rrbracket$.
\end{thm}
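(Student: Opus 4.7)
The plan is to deduce the result from B\"{o}ckle's local-to-global presentation of deformation rings by balancing generator and relation counts via the Poitou-Tate/Wiles formula. First I would apply the setup reviewed in $\S$\ref{subsec:local-global-deformation} to exhibit $\mathcal{R}$ as the ring representing the fiber product $\mathfrak{D}(S)$, and then invoke B\"{o}ckle's tangent-obstruction analysis \cite{bockle-presentations}: the minimal number of generators of $\mathcal{R}$ as an $\mathcal{O}$-algebra equals $n := \dim_{\mathbb{F}} \mathrm{H}^1_{\mathcal{L}}(G_{\mathbb{Q},S}, \mathrm{ad}^0(\overline{\rho}))$, where $\mathcal{L}$ is the Selmer structure assembled from the subspaces $L_{\ell,1}$, while the minimal number of relations is bounded above by $m := \dim_{\mathbb{F}} \mathrm{H}^1_{\mathcal{L}^\perp}(G_{\mathbb{Q},S}, \mathrm{ad}^0(\overline{\rho})^*) + \sum_{\ell \in S} \mathrm{gen}(\mathfrak{a}_\ell)$. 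Here $\mathcal{L}^\perp$ denotes the dual Selmer structure, and the second summand accounts for the defect of each relatively (rather than absolutely) representable local deformation functor. It suffices to prove $n \geq m$.

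The core step is the Greenberg-Wiles formula applied to $M := \mathrm{ad}^0(\overline{\rho})$. Writing $h^i(\cdot) := \dim_{\mathbb{F}} \mathrm{H}^i(\cdot)$ for brevity,
\begin{equation*}
h^1_{\mathcal{L}} - h^1_{\mathcal{L}^\perp} = h^0(G_{\mathbb{Q},S}, M) - h^0(G_{\mathbb{Q},S}, M^*) + \sum_{v \in S \cup \{\infty\}} \bigl( \dim_{\mathbb{F}} L_{v,1} - h^0(\mathbb{Q}_v, M) \bigr).
\end{equation*}
By Assumption \ref{assu:working_assumptions}.(TW) both global $\mathrm{H}^0$ terms vanish. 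At the archimedean place, the fixed-determinant convention forces $L_{\infty,1} = 0$, while oddness of $\overline{\rho}$ gives $h^0(\mathbb{R}, M) = 1$, contributing $-1$. At $p$, the low weight crystalline hypothesis together with \cite[Corollary 2.3]{diamond-flach-guo} yields $\dim_{\mathbb{F}} L_{p,1} = h^0(\mathbb{Q}_p, M) + 1$, and the liftability statement \cite[Lemma 2.4.1]{clozel-harris-taylor} gives $\mathfrak{a}_p = 0$. Hence the $p$-contribution is exactly $+1$ and cancels the archimedean deficit; combined with hypothesis (1) at the remaining finite places we conclude $n - m \geq 0$.

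Finally, to upgrade this numerical inequality to the complete intersection property, I would invoke the finiteness of $\mathcal{R}$ over $\mathcal{O}$, which in our applications is guaranteed by Theorem \ref{thm:diamond-flach-guo-dimitrov}. A quotient of the $(n+1)$-dimensional regular local ring $\mathcal{O}\llbracket X_1, \dots, X_n \rrbracket$ by an ideal generated by $m$ elements has Krull dimension at least $n+1-m$; since $\mathcal{R}$ has Krull dimension $1$, this forces $m \geq n$, hence $m = n$. Then $(f_1, \dots, f_n)$ is necessarily a regular sequence in $\mathcal{O}\llbracket X_1, \dots, X_n \rrbracket$ (by the usual height criterion in a regular local ring), so $\mathcal{R}$ is a complete intersection. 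The principal technical obstacle is B\"{o}ckle's bound on the number of relations, which requires a careful dualization of local $\mathrm{H}^2$-obstructions into the dual Selmer group with the appropriate local conditions; the remaining numerology is routine once the explicit local computations of $\S$\ref{subsec:local_deformation_conditions} are in hand.
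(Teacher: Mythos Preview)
Your argument unpacks exactly what the paper's one-line proof (a citation to B\"ockle plus the remark that Assumption~\ref{assu:working_assumptions}.(TW) kills $\mathrm{H}^0(G_{\mathbb{Q},S},\mathrm{ad}^0(\overline{\rho})^\vee)$) is pointing to: B\"ockle's presentation bound combined with the Greenberg--Wiles formula, with the archimedean deficit $-1$ cancelled by the $+1$ at $p$ coming from the low weight crystalline condition. One small imprecision: only the dual $\mathrm{H}^0$ needs (TW); the vanishing of $\mathrm{H}^0(G_{\mathbb{Q},S},\mathrm{ad}^0(\overline{\rho}))$ is just Schur's lemma for the absolutely irreducible $\overline{\rho}$.

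Where your write-up diverges from the paper is the last step. B\"ockle's result yields a presentation $\mathcal{O}\llbracket X_1,\dots,X_n\rrbracket \twoheadrightarrow \mathcal{R}$ with kernel generated by $m\le n$ elements; you then invoke Theorem~\ref{thm:diamond-flach-guo-dimitrov} to force $\dim\mathcal{R}=1$ and hence $m=n$. In the paper's logic these two moves are kept separate: the present theorem (via Corollary~\ref{cor:deformation_ring_complete_intersection}) is used only for the inequality $m\le n$, while the upgrade to $m=n$ is deferred to the proof of Theorem~\ref{thm:refined_R=T}, where finiteness of $\mathcal{R}^{\Sigma^-}_{\Sigma^+}$ is obtained as a quotient of $\mathcal{R}_\Sigma\simeq\mathcal{T}_\Sigma$ and Lemma~\ref{lem:finite_flat_complet_intersection} is applied. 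Your appeal to Theorem~\ref{thm:diamond-flach-guo-dimitrov} is not circular here, but it does specialize the statement: B\"ockle's theorem is meant to hold for arbitrary local conditions satisfying the listed inequalities, whereas finiteness from $R=\mathbb{T}$ is only available for the particular $\Sigma$-ramified deformation problems. If you want to match the paper's (and B\"ockle's) generality, stop at $n\ge m$ and read the displayed conclusion as asserting a presentation with at most $n$ relations.
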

\begin{proof}
See \cite[Corollary 4.3 and Theorem 5.8]{bockle-presentations}.
Note that
$\mathrm{H}^0(G_{\mathbb{Q}, S}, \mathrm{ad}^0(\overline{\rho})^\vee) = 0$ 
where $M^\vee = \mathrm{Hom}_{\mathbb{F}}(M,\mathbb{F})(1)$
since
$\overline{\rho}$ satisfies Assumption \ref{assu:working_assumptions}.(TW).
\end{proof}

\begin{cor} \label{cor:deformation_ring_complete_intersection}
The deformation ring $\mathcal{R}^{\Sigma^-}_{\Sigma^+}$ is a complete intersection; thus, we have
$$\mathcal{R}^{\Sigma^-}_{\Sigma^+} \simeq \mathcal{O}\llbracket X_1, \cdots, X_n \rrbracket / (f_1, \cdots, f_n)$$
\end{cor}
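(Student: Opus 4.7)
The plan is to invoke Böckle's theorem (the statement immediately preceding this corollary) and verify its hypotheses for each of the five local deformation conditions entering the definition of $\mathcal{R}^{\Sigma^-}_{\Sigma^+}$. Since $\overline{\rho}$ already satisfies Assumption \ref{assu:working_assumptions}.(TW) by standing hypothesis, the only thing to check is the ``numerical inequality''
\[
\mathrm{dim}_{\mathbb{F}} L_{\ell,1} - \mathrm{dim}_{\mathbb{F}} \mathrm{H}^0(\mathbb{Q}_\ell, \mathrm{ad}^0(\overline{\rho})) - \mathrm{gen}(\mathfrak{a}_\ell) \geq 0
\]
at every prime $\ell$ appearing in the level, together with the analogous inequality at $p$. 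Each of these has essentially been computed in $\S$\ref{subsec:local_deformation_conditions}; the proof only has to assemble the bookkeeping.

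First, at the prime $p$, the deformation condition is low weight crystalline, and $\S$\ref{subsec:local_deformation_conditions} records $\mathfrak{a}_p = 0$ (by \cite[Lemma 2.4.1]{clozel-harris-taylor}) together with the dimension formula $\mathrm{dim}_{\mathbb{F}} L_{p,1} = \mathrm{dim}_{\mathbb{F}} \mathrm{H}^0(\mathbb{Q}_p, \mathrm{ad}^0(\overline{\rho})) + 1$; so the inequality at $p$ holds (indeed strictly, with slack $1$). For primes $\ell \notin \Sigma^+ \cup \Sigma^-$, the condition is minimally ramified: if $p \nmid \#\overline{\rho}(I_\ell)$ this coincides with the unramified condition, for which $\mathfrak{a}_\ell = 0$ and $\mathrm{dim}_{\mathbb{F}} L_{\ell,1} = \mathrm{dim}_{\mathbb{F}} \mathrm{H}^0(\mathbb{Q}_\ell, \mathrm{ad}^0(\overline{\rho}))$ by Lemma \ref{lem:local_unramified}; if instead $p \mid \#\overline{\rho}(I_\ell)$ we are in the unobstructed ``new/minimally ramified'' case of $\S$\ref{subsubsec:new}, where the lemma at the end of $\S$\ref{subsubsec:new} gives smoothness and the required dimension count.

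Next, for $\ell \in \Sigma^+$ the deformation is unrestricted and the inequality is precisely \cite[Example 5.1.(i)]{bockle-presentations}, which is quoted verbatim in $\S$\ref{subsec:local_deformation_conditions}. For $\ell \in \Sigma^-$ the deformation is new (when $\overline{\rho}$ is unramified at $\ell$) or minimally ramified (when $\overline{\rho}$ is ramified at $\ell$); in either case the lemma at the end of $\S$\ref{subsubsec:new} gives both $\mathrm{dim}_{\mathbb{F}} L_{\ell,1} = \mathrm{dim}_{\mathbb{F}} \mathrm{H}^1(\mathbb{Q}_\ell, \mathrm{ad}^0(\overline{\rho}))$ and unobstructedness, so $\mathfrak{a}_\ell = 0$ and the local Euler characteristic formula at $\ell \neq p$ again yields the inequality.

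With all local inequalities verified, Böckle's theorem applies directly to the global deformation functor $\mathfrak{D}^{\Sigma^-}_{\Sigma^+}$ built from these local pieces via the local-global pullback square in $\S$\ref{subsec:local-global-deformation}, and we conclude that $\mathcal{R}^{\Sigma^-}_{\Sigma^+}$ is a complete intersection, giving the desired presentation
\[
\mathcal{R}^{\Sigma^-}_{\Sigma^+} \simeq \mathcal{O}\llbracket X_1, \ldots, X_n \rrbracket / (f_1, \ldots, f_n).
\]
The only mildly delicate point is making sure the ``new'' local condition at an unramified prime $\ell \in \Sigma^-$ with $\ell \not\equiv 1 \pmod p$ really satisfies both the dimension equality and vanishing of $\mathfrak{a}_\ell$; this is the content invoked from \cite[E3]{taylor-icosahedral-2}, which is the main obstacle in the sense that it is the one place where the argument is not a transparent local Euler characteristic computation but relies on an explicit presentation of the local versal ring.
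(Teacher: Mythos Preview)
Your proposal is correct and follows exactly the approach of the paper: the paper's own proof is the single sentence ``Considering all the local deformation conditions in \S\ref{subsec:local_deformation_conditions}, the conclusion immediately follows,'' and you have simply unpacked that sentence by walking through each of the five local types and checking B\"ockle's numerical inequality, which is precisely what the paper intends. There is nothing to add.
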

\begin{proof}
Considering all the local deformation conditions in $\S$\ref{subsec:local_deformation_conditions}, the conclusion immediately follows.
\end{proof}

\subsection{A refined $R = \mathbb{T}$ theorem} \label{subsec:refined_R=T}
The goal of this section is to prove the following theorem.

\begin{thm} \label{thm:refined_R=T}
Keep Assumption \ref{assu:working_assumptions}.
Let $N \geq 1$ be an integer such that $N(\overline{\rho})$ divides $N$.
Let $\Sigma$ and $\Sigma^+$ be two finite sets of primes not equal to $p$ such that 
$P_{\overline{\rho}} \subseteq \Sigma$. Write $\Sigma^- = \Sigma \setminus \Sigma^+$.
There exists an isomorphism
$$\pi^{\Sigma^-}_{\Sigma^+} : \mathcal{R}^{\Sigma^-}_{\Sigma^+} \simeq \mathbb{T}(N^+_{\overline{\rho},\Sigma^+} N^-)^{N^-}_{ \mathfrak{m}_{\Sigma^+} }$$
of reduced finite flat complete intersections over $\mathcal{O}$.
\end{thm}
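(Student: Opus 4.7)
The plan is to upgrade the full $R=\mathbb{T}$ isomorphism (Theorem \ref{thm:diamond-flach-guo-dimitrov}) by passing to matching quotients on the Galois and Hecke sides. Setting $\Sigma := \Sigma^+ \cup \Sigma^-$, Theorem \ref{thm:diamond-flach-guo-dimitrov} together with Lemma \ref{lem:sigma-level-identification} provides the top horizontal isomorphism in a commutative square
\[
\begin{array}{ccc}
\mathcal{R}_{\Sigma} & \stackrel{\sim}{\longrightarrow} & \mathbb{T}(N_{\overline{\rho},\Sigma})_{\mathfrak{m}_\Sigma} \\
\downarrow & & \downarrow \\
\mathcal{R}^{\Sigma^-}_{\Sigma^+} & \stackrel{\pi^{\Sigma^-}_{\Sigma^+}}{\longrightarrow} & \mathbb{T}(N^+_{\overline{\rho},\Sigma^+} N^-)^{N^-}_{\mathfrak{m}_{\Sigma^+}}
\end{array}
\]
where the left vertical arrow imposes the new or minimally ramified deformation conditions of Definition \ref{defn:new_deformations} at $q \in \Sigma^-$ (in place of the unrestricted conditions carried by $\mathcal{R}_\Sigma$), while the right vertical arrow combines the degeneracy from level $N_{\overline{\rho},\Sigma}$ (which has $q^{c(q)+d(q)}$ at $q \in \Sigma^-$) down to level $N^+_{\overline{\rho},\Sigma^+} N^-$ (which has $q^1$ there, $N^-$ being square-free) with the projection onto the $N^-$-new quotient.

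I would first produce the map $\pi^{\Sigma^-}_{\Sigma^+}$ via the universal property of $\mathcal{R}^{\Sigma^-}_{\Sigma^+}$. The universal deformation on $\mathbb{T}(N_{\overline{\rho},\Sigma})_{\mathfrak{m}_\Sigma}$, pushed forward to the $N^-$-new quotient, is a $\Sigma^+$-ramified deformation whose $\Sigma^-$-new local conditions are checked one newform at a time, since the target embeds into $\prod_f \mathcal{O}_f$ with $f$ running over the $N^-$-new eigenforms of residual representation $\overline{\rho}$. By local--global compatibility: at $q \in \Sigma^-$ with $\overline{\rho}$ unramified at $q$, the $N^-$-newness of $f$ forces $q \| N(f)$, so $\rho_f\vert_{G_{\mathbb{Q}_q}}$ is a twist of the Steinberg representation and thus satisfies the $q$-new condition of \S \ref{subsubsec:new}; at $q \in \Sigma^-$ with $\overline{\rho}$ ramified at $q$, the same newness yields $\mathrm{ord}_q N(f) = c(q)$, matching the minimally ramified condition. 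Surjectivity of $\pi^{\Sigma^-}_{\Sigma^+}$ is then immediate: the image contains all Hecke operators $T_\ell$ with $\ell$ outside the bad primes (as traces of Frobenius), and these topologically generate the target by Chebotarev.

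For injectivity I would tensor with $E$. Since $\mathcal{R}^{\Sigma^-}_{\Sigma^+}$ is a quotient of $\mathcal{R}_\Sigma$ and $\mathcal{R}_\Sigma \otimes_\mathcal{O} E \simeq \prod_f E_f$ is a product of fields (by the top isomorphism and the structure of the Hecke algebra after inverting $p$), the source $\mathcal{R}^{\Sigma^-}_{\Sigma^+}\otimes_\mathcal{O}E$ is again a product of fields, indexed by those newforms in $\mathcal{R}_\Sigma$ satisfying the $\Sigma^-$-new conditions; by modularity (Theorem \ref{thm:diamond-flach-guo-dimitrov}) combined with the local identification above, this set is precisely the set of newforms in the $N^-$-new subspace at level $N^+_{\overline{\rho},\Sigma^+} N^-$ with residual representation $\overline{\rho}$, i.e., the indexing set for $\mathbb{T}(N^+_{\overline{\rho},\Sigma^+} N^-)^{N^-}_{\mathfrak{m}_{\Sigma^+}}\otimes_\mathcal{O}E$. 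Hence $\pi^{\Sigma^-}_{\Sigma^+}\otimes_\mathcal{O} E$ is an isomorphism. Corollary \ref{cor:deformation_ring_complete_intersection} now makes $\mathcal{R}^{\Sigma^-}_{\Sigma^+}$ a finite complete intersection $\mathcal{O}$-algebra of Krull dimension one, hence Cohen--Macaulay, so $\lambda$ is a non-zero-divisor and $\mathcal{R}^{\Sigma^-}_{\Sigma^+}$ is $\mathcal{O}$-flat. A surjection of finite $\mathcal{O}$-algebras with $\mathcal{O}$-flat source that becomes an isomorphism after inverting $\lambda$ is itself an isomorphism, yielding the theorem; reducedness of $\mathcal{R}^{\Sigma^-}_{\Sigma^+}$ is then inherited from the Hecke algebra target.

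The hard part will be the pointwise identification of local conditions at the primes $q \in \Sigma^-$ where $\overline{\rho}$ is unramified and $q \equiv -1 \pmod p$ is permitted; this is precisely the obstruction flagged in the introduction as blocking the purely $R=\mathbb{T}$-based approach of Chida--Hsieh, and it must be handled carefully via the explicit filtration of $\mathrm{ad}^0(\rho)\vert_{G_{\mathbb{Q}_q}}$ from \S \ref{subsubsec:new}, with bookkeeping of twists so that the $q$-new deformation condition is correctly matched to the local Steinberg shape of $\rho_f\vert_{G_{\mathbb{Q}_q}}$ for a $q$-newform $f$. This is the step that avoids Lundell's mod $p$ reducedness hypothesis.
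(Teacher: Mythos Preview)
Your overall architecture matches the paper's: pass from the full $R=\mathbb{T}$ of Theorem \ref{thm:diamond-flach-guo-dimitrov} to the quotient $\mathcal{R}^{\Sigma^-}_{\Sigma^+}$, use Corollary \ref{cor:deformation_ring_complete_intersection} together with finiteness (as a quotient of $\mathcal{R}_\Sigma$) to get that $\mathcal{R}^{\Sigma^-}_{\Sigma^+}$ is finite flat complete intersection over $\mathcal{O}$, construct the surjection to the $N^-$-new Hecke quotient, and then argue injectivity by comparing characteristic zero points. The flatness/reducedness bookkeeping and the ``surjection of flat $\mathcal{O}$-algebras which is an isomorphism after inverting $\lambda$ is an isomorphism'' step are fine and are exactly what the paper does (packaged there as Lemmas \ref{lem:finite_flat_complet_intersection} and \ref{lem:reduced_quotient}).

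There is, however, a genuine gap in your injectivity step. You assert that the characteristic zero points of $\mathcal{R}^{\Sigma^-}_{\Sigma^+}$ are \emph{precisely} the $N^-$-newforms, but your ``local identification above'' only gives one inclusion: an $N^-$-newform $f$ has $\rho_f\vert_{G_{\mathbb{Q}_q}}$ Steinberg by local--global compatibility, hence lies in the $q$-new deformation locus. The reverse inclusion is not free. The $q$-new deformation condition of \S\ref{subsubsec:new} allows the upper-triangular shape $\left(\begin{smallmatrix}\chi_{\mathrm{cyc}}&\xi\\0&\mathbf{1}\end{smallmatrix}\right)$ with $\xi$ \emph{arbitrary}, in particular unramified or zero; so a priori a newform $g$ of level prime to $q$ whose Frobenius eigenvalues at $q$ happen (up to twist) to be $q$ and $1$ would satisfy the $q$-new condition without being $q$-new, and would survive in $\mathcal{R}^{\Sigma^-}_{\Sigma^+}\otimes E$ but not in $\mathbb{T}(N^+_{\overline{\rho},\Sigma^+}N^-)^{N^-}\otimes E$, breaking your dimension count. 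The paper closes this with the Ramanujan--Petersson bound: the $q$-new shape forces $\mathrm{tr}\,\rho_g(\mathrm{Frob}_q)=\pm q^{(k-2)/2}(q+1)$, whereas a good prime satisfies $|a_q(g)|\leq 2q^{(k-1)/2}<q^{(k-2)/2}(q+1)$, a contradiction. You need this (or an equivalent) argument, and it is not subsumed by anything you wrote.

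Your final paragraph also misreads where the difficulty lies. The obstruction flagged in Remark \ref{rem:local_deformation_conditions} concerns the nonexistent intermediate ring $``\mathcal{R}^{\Sigma^-\textrm{-ss}}_{\Sigma^+}"$ matching the \emph{full} Hecke algebra $\mathbb{T}(N^+_{\overline{\rho},\Sigma^+}N^-)_{\mathfrak{m}_{\Sigma^+}}$, where one would need a local condition simultaneously capturing unramified and Steinberg lifts; this is not the issue for the $N^-$-new quotient, where the $q$-new condition is perfectly adequate. The actual delicate point here is the one you missed, namely ruling out unramified modular points inside the $q$-new locus via Ramanujan--Petersson.
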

Recall that
$\mathbb{T}(N^+_{\overline{\rho},\Sigma^+} N^-)_{ \mathfrak{m}_{\Sigma^+} }$
is isomorphic to the image of the abstract $S$-anemic Hecke algebra $\mathbb{T}^S$ in 
$\mathrm{End}_{\mathcal{O}} ( \mathrm{H}^1_{\mathrm{\acute{e}t},c}(Y_{1}(N^+_{\overline{\rho},\Sigma^+} N^-, \mathcal{F}^k_p)[\langle-\rangle - \mathbf{1}]_{ \mathfrak{m}_{\Sigma^+} } )$ with $\Sigma \subseteq S$
and
$\mathbb{T}(N^+_{\overline{\rho},\Sigma^+} N^-)^{N^-}_{ \mathfrak{m}_{\Sigma^+} }$ is the $N^-$-new quotient of 
$\mathbb{T}(N^+_{\overline{\rho},\Sigma^+} N^-)_{ \mathfrak{m}_{\Sigma^+} }$.

Before giving a proof, we recall two lemmas.
\begin{lem} \label{lem:finite_flat_complet_intersection}
Suppose that $R \simeq \mathcal{O} \llbracket X_1, \cdots , X_n \rrbracket / (f_1, \cdots, f_m)$ is a finite $\mathcal{O}$-algebra with $m \leq n$. Then $m = n$ and $R$ is a finite flat complete intersection.
\end{lem}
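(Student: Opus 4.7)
The plan is to reduce this to a standard application of Krull's height theorem combined with the Cohen--Macaulay property of the regular local ring $S := \mathcal{O}\llbracket X_1, \ldots, X_n \rrbracket$, which has Krull dimension $n+1$.

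First I would apply Krull's height theorem to the ideal $I := (f_1, \ldots, f_m) \subseteq S$: every minimal prime of $I$ has height at most $m$, which gives
\[ \dim R \;=\; \dim S/I \;\geq\; \dim S - m \;=\; n+1-m. \]
On the other hand, the finiteness of $R$ over $\mathcal{O}$ forces $\dim R \leq \dim \mathcal{O} = 1$. Combined with the hypothesis $m \leq n$, this immediately yields $m = n$ and $\dim R = 1$.

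The equality $m = n = \dim S - \dim R$ means that $f_1, \ldots, f_n$ cuts out a quotient of $S$ of the expected codimension. Since $S$ is regular, hence Cohen--Macaulay, such a sequence is automatically a regular sequence in $S$; therefore $R = S/(f_1, \ldots, f_n)$ is a complete intersection quotient of $S$, and in particular Cohen--Macaulay of dimension one.

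It remains to verify $\mathcal{O}$-flatness. The reduction $R/\lambda R$ is a finitely generated $\mathbb{F}$-module, hence Artinian, so $\lambda$ generates an $\mathfrak{m}_R$-primary ideal in the one-dimensional local ring $R$, i.e.\ $\lambda$ is a system of parameters for $R$. The Cohen--Macaulay property then guarantees that $\lambda$ is a non-zero-divisor in $R$, so $R$ is $\mathcal{O}$-torsion free; since $\mathcal{O}$ is a DVR, any finitely generated torsion-free $\mathcal{O}$-module is free, so $R$ is $\mathcal{O}$-flat (in fact $\mathcal{O}$-free). I do not anticipate a substantive obstacle here; everything is dimension bookkeeping on top of the standard fact that a length-$\mathrm{ht}(I)$ generating sequence in a Cohen--Macaulay local ring is regular, so the three conclusions ($m=n$, complete intersection, $\mathcal{O}$-flat) all follow from the same circle of ideas.
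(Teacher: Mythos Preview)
Your argument is correct and is exactly the standard dimension/Cohen--Macaulay argument one expects here. The paper itself does not give a proof of this lemma; it simply cites \cite[Lemma~2.3]{lundell-level-lowering}. So there is nothing in the paper to compare your approach against, and your write-up in fact supplies the details that the paper omits.
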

\begin{proof}
See \cite[Lemma 2.3]{lundell-level-lowering}.
\end{proof}
\begin{lem} \label{lem:reduced_quotient}
Suppose that $R$ is a finite flat reduced $\mathcal{O}$-algebra.
Let $R/\mathfrak{a}$ be a quotient of $R$ which is a finite flat $\mathcal{O}$-algebra.
Then $R/\mathfrak{a}$ is reduced. 
\end{lem}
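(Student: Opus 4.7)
The plan is to exploit the fact that $\mathcal{O}$ is a complete discrete valuation ring, so that "finite flat" over $\mathcal{O}$ means "finite free" over $\mathcal{O}$, equivalently $\mathcal{O}$-torsion-free of finite rank. In particular, both $R$ and $R/\mathfrak{a}$ embed into their generic fibers, and reducedness can be checked after inverting a uniformizer.

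First I would pass to $R_E := R \otimes_{\mathcal{O}} E$. Since $R$ is $\mathcal{O}$-flat and reduced, and localization preserves reducedness, $R_E$ is a reduced finite-dimensional $E$-algebra; being Artinian and reduced, it is a finite product of finite field extensions of $E$, say $R_E \simeq \prod_{i \in I} E_i$. Then I would observe that any quotient $E$-algebra of $\prod_{i \in I} E_i$ is obtained by projecting to a subset of factors, hence is itself a product of fields and therefore reduced. Applying this to the quotient map $R_E \twoheadrightarrow (R/\mathfrak{a}) \otimes_{\mathcal{O}} E$ shows that $(R/\mathfrak{a})_E$ is reduced.

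Finally, the finite flatness assumption on $R/\mathfrak{a}$ gives that $R/\mathfrak{a}$ is $\mathcal{O}$-torsion-free, so the natural map
\[
R/\mathfrak{a} \hookrightarrow (R/\mathfrak{a}) \otimes_{\mathcal{O}} E
\]
is injective. Since $R/\mathfrak{a}$ embeds into a reduced ring, it is itself reduced, as desired.

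The argument is essentially formal once the DVR hypothesis on $\mathcal{O}$ is invoked; there is no serious obstacle. The only point that requires any care is justifying that $R_E$ is reduced: this uses that $R$ is reduced \emph{and} $\mathcal{O}$-flat, so that $R \hookrightarrow R_E$ and no new nilpotents are introduced by inverting the uniformizer (equivalently, the generic fiber of a reduced flat $\mathcal{O}$-scheme is reduced). Both flatness assumptions (on $R$ and on $R/\mathfrak{a}$) are used in an essential way.
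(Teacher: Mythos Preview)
Your argument is correct. The paper does not supply its own proof of this lemma but simply cites \cite[Lemma 2.4]{lundell-level-lowering}, so there is no in-paper argument to compare against; your proof is the standard one and would serve perfectly well in its place.
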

\begin{proof}
See \cite[Lemma 2.4]{lundell-level-lowering}.
\end{proof}
\begin{proof}[Proof of Theorem \ref{thm:refined_R=T}]
By Theorem \ref{thm:diamond-flach-guo-dimitrov}, we have
an isomorphism 
$$ \pi_\Sigma : \mathcal{R}_{\Sigma} \simeq \mathcal{T}_{\Sigma}$$
of reduced finite flat complete intersections over $\mathcal{O}$.
Note that the reduced and finite flat properties come from $\mathcal{T}_{\Sigma}$ (cf.~\cite[$\S$1.2 and Theorem 6.2]{dimitrov-ihara}).
Since $\mathcal{R}^{\Sigma^-}_{\Sigma^+}$ is a quotient of $\mathcal{R}_{\Sigma}$, $\mathcal{R}^{\Sigma^-}_{\Sigma^+}$ is finite over $\mathcal{O}$. 
By Corollary \ref{cor:deformation_ring_complete_intersection}, $\mathcal{R}^{\Sigma^-}_{\Sigma^+}$ is a complete intersection.
Furthermore, the flatness and the reducedness of $\mathcal{R}^{\Sigma^-}_{\Sigma^+}$ follows from Lemma \ref{lem:finite_flat_complet_intersection} and Lemma \ref{lem:reduced_quotient}, respectively.
We explicitly construct the map
$$\pi^{\Sigma^-}_{\Sigma^+} : \mathcal{R}^{\Sigma^-}_{\Sigma^+} \to \mathbb{T}(N^+_{\overline{\rho},\Sigma^+} N^-)^{N^-}_{ \mathfrak{m}_{\Sigma^+} }$$
satisfying the commutative diagram
\begin{equation} \label{eqn:r=t_diagram}
\begin{gathered}
\xymatrix{
\mathcal{R}_{\Sigma} \ar[rr]^-{\pi_{\Sigma}}_{\textrm{Thm. \ref{thm:diamond-flach-guo-dimitrov}}} \ar@{->>}[dd]  & & \mathcal{T}_{\Sigma} \ar[r]^-{\simeq}_-{\textrm{Lem. \ref{lem:sigma-level-identification}}} & \mathbb{T}(N_{\overline{\rho},\Sigma})_{ \mathfrak{m}_{\Sigma} } \ar@{->>}[d] \\
&  &  & \mathbb{T}(N^+_{\overline{\rho},\Sigma^+} N^-)_{ \mathfrak{m}_{\Sigma^+} } \ar@{->>}[d] \\
\mathcal{R}^{\Sigma^-}_{\Sigma^+} \ar[rrr]^-{\pi^{\Sigma^-}_{\Sigma^+}  } & & & \mathbb{T}(N^+_{\overline{\rho},\Sigma^+} N^-)^{N^-}_{ \mathfrak{m}_{\Sigma^+} }
}
\end{gathered}
\end{equation}
and show that it is an isomorphism.

Suppose that there is an $\mathcal{O}$-algebra morphism $\alpha : \mathcal{R}^{\Sigma^-}_{\Sigma^+} \to \mathcal{O}'$ where $\mathcal{O}'$ is a domain of characteristic zero and the corresponding deformation is denoted by $\rho'$.
By making the following composition
\begin{equation} \label{eqn:hecke_deformation_quotient}
\xymatrix{
\mathbb{T}(N_{\overline{\rho},\Sigma})_{ \mathfrak{m}_{\Sigma} } \ar[r]^-{\pi^{-1}_\Sigma}_-{\simeq} & \mathcal{R}_{\Sigma} \ar@{->>}[r] & \mathcal{R}^{\Sigma^-}_{\Sigma^+} \ar[r]^-{\alpha} & \mathcal{O}' ,
}
\end{equation}
there exists a newform $g$ of level dividing $N_{\overline{\rho},\Sigma}$ and $\rho'$ and $\rho_{g}$ are obviously equivalent.

Let $\ell$ be a prime dividing $N^-$ not dividing $N(\overline{\rho})$.
Since $\rho' \vert_{G_{\mathbb{Q}_\ell}}$ is a $\ell$-new deformation at $\ell$, we have
$$\mathrm{tr} \ \rho ' ( \mathrm{Frob}_\ell) = \pm \ell^{\frac{k-2}{2}} (\ell +1).$$
By considering the Ramanujan--Petersson bound (at good primes), it is easy to see that $\rho'$ is ramified at all primes dividing $N^-$.
Thus, $g$ is new at all primes dividing $N^-$ and the map (\ref{eqn:hecke_deformation_quotient}) factors through $\mathbb{T}(N^+_{\overline{\rho},\Sigma^+}  N^-)^{N^-}_{ \mathfrak{m}_{\Sigma^+} }$ and we obtain a map $\beta :  \mathbb{T}(N^+_{\overline{\rho},\Sigma^+}  N^-)^{N^-}_{ \mathfrak{m}_{\Sigma^+} } \to \mathcal{O}'$.
The universality of $\mathcal{R}^{\Sigma^-}_{\Sigma^+}$ induces the surjective map
$$\pi^{\Sigma^-}_{\Sigma^+} : \mathcal{R}^{\Sigma^-}_{\Sigma^+} \to \mathbb{T}(N^+_{\overline{\rho},\Sigma^+} N^-)^{N^-}_{ \mathfrak{m}_{\Sigma^+} }$$
such that $\alpha = \beta \circ \pi^{\Sigma^-}_{\Sigma^+}$.
Since $\alpha = \beta \circ \pi^{\Sigma^-}_{\Sigma^+}$, we have the bijection between
\begin{itemize}
\item the characteristic zero minimal prime ideals of $\mathcal{R}^{\Sigma^-}_{\Sigma^+}$ and
\item the characteristic zero minimal prime ideals of $\mathbb{T}(N^+_{\overline{\rho},\Sigma^+} N^-)^{N^-}_{ \mathfrak{m}_{\Sigma^+} }$.
\end{itemize}
We now claim the injectivity of $\pi^{\Sigma^-}_{\Sigma^+}$. Due to the bijection between minimal primes above, the kernel of $\pi^{\Sigma^-}_{\Sigma^+}$ is contained in the intersection of all the characteristic zero minimal prime ideals of $\mathcal{R}^{\Sigma^-}_{\Sigma^+}$. In other words,
$$\mathrm{ker} (  \pi^{\Sigma^-}_{\Sigma^+} ) \subseteq \bigcap_{\substack{\mathrm{min} \\ \mathrm{char} \ 0 }}\wp .$$
Here, $\bigcap_{\substack{\mathrm{min} \\ \mathrm{char} \ 0 }}$ means that ideal $\wp$ runs over the set of minimal ideals of $\mathcal{R}^{\Sigma^-}_{\Sigma^+}$ of characteristic zero. 
Since $\mathcal{O} \to \mathcal{R}^{\Sigma^-}_{\Sigma^+}$ is finite flat, a uniformizer $\lambda$ of $\mathcal{O}$ maps to a non-zero divisor of $\mathcal{R}^{\Sigma^-}_{\Sigma^+}$.  Thus, any minimal prime $\wp$ does not contain $\lambda$.
Thus, we obtain the conclusion due to the reduced property of $\mathcal{R}^{\Sigma^-}_{\Sigma^+}$.
\end{proof}
\begin{rem} \label{rem:local_deformation_conditions}
In (\ref{eqn:r=t_diagram}), one may expect the existence of the deformation ring $``\mathcal{R}^{\Sigma^-\textrm{-ss}}_{\Sigma^+}"$ isomorphic to $\mathbb{T}(N^+_{\overline{\rho},\Sigma^+} N^-)$. If so, the following diagram would commute:
\[
\xymatrix{
\mathcal{R}_{\Sigma} \ar[rr]^-{\pi_{\Sigma}}_{\textrm{Thm. \ref{thm:diamond-flach-guo-dimitrov}}} \ar@{->>}[d]  & & \mathcal{T}_{\Sigma} \ar[r]^-{\simeq}_-{\textrm{Lem. \ref{lem:sigma-level-identification}}} & \mathbb{T}(N_{\overline{\rho},\Sigma})_{ \mathfrak{m}_{\Sigma} } \ar@{->>}[d] \\
``\mathcal{R}^{\Sigma^-\textrm{-ss}}_{\Sigma^+}" \ar@{->>}[d] \ar[rrr]^-{\simeq} &  &  & \mathbb{T}(N^+_{\overline{\rho},\Sigma^+} N^-)_{ \mathfrak{m}_{\Sigma^+} } \ar@{->>}[d] \\
\mathcal{R}^{\Sigma^-}_{\Sigma^+} \ar[rrr]^-{\pi^{\Sigma^-}_{\Sigma^+}  } & & & \mathbb{T}(N^+_{\overline{\rho},\Sigma^+} N^-)^{N^-}_{ \mathfrak{m}_{\Sigma^+} } .
}
\]
However, it looks difficult to impose the right local deformation condition at primes dividing $N^-$ (unless $\overline{\rho}$ is ramified at all primes dividing $N^-$) since the local deformation condition at unramified primes dividing $N^-$ should include both unramified and new deformations. This is also pointed out in \cite[$\S$9]{dummigan-higher-congruences}.
\end{rem}
\section{Relative computation of adjoint Selmer groups} \label{sec:adjoint_Selmer}
The goal of this section is to prove Theorem \ref{thm:key_thm} (the Selmer computation).
\subsection{Preliminaries on Galois cohomology}
Let $T$ be a free $\mathcal{O}$-module of rank $d$ endowed with continuous action of $G_{\mathbb{Q}}$ and $S$ a finite set of places of $\mathbb{Q}$ containing $p$, $\infty$, and the ramified primes for $T$.
In other words, we have a continuous $d$-dimensional integral Galois representation
$$\rho : G_{\mathbb{Q},S} \to \mathrm{GL}_d(\mathcal{O}) \simeq \mathrm{Aut}_{\mathcal{O}}(T) .$$
Let $A = T \otimes_{\mathcal{O}} E/\mathcal{O}$ be the associated discrete Galois module.
For a Selmer structure $\mathcal{L} = (L_\ell)_{\ell \in S}$ with $L_\ell \subseteq \mathrm{H}^1(\mathbb{Q}_\ell, A)$,
we define the \textbf{(discrete) Selmer group of $A$ with respect to $\mathcal{L}$} by 
$$\mathrm{Sel}_{\mathcal{L}} (G_{\mathbb{Q},S}, A) := \mathrm{ker} \left( 
\phi_{\mathcal{L}} : \mathrm{H}^1 (G_{\mathbb{Q},S}, A) \to \prod_{\ell \in S}  \dfrac{ \mathrm{H}^1 (\mathbb{Q}_\ell, A) }{ L_\ell}
\right)$$

The Tate local duality gives us the non-degenerate pairing 
$$\mathrm{H}^1(\mathbb{Q}_\ell, A) \times \mathrm{H}^1(\mathbb{Q}_\ell, T^*) \to E/\mathcal{O} $$
where $T^* := \mathrm{Hom}(A, E/\mathcal{O}(1))$.
For a Selmer structure $\mathcal{L}$ for $S$ and $A$,
we define the dual Selmer structure $\mathcal{L}^* = (L^{*}_\ell)_{\ell \in S}$ for $S$ and $T^*$
by $L^*_\ell := L^\perp_\ell$ under the pairing.
Then we define the \textbf{dual (compact) Selmer group of $T^*$ with respect to $\mathcal{L}^*$} by
$$\mathrm{Sel}_{\mathcal{L}^*} (G_{\mathbb{Q},S}, T^*) := \mathrm{ker} \left( \phi_{\mathcal{L}^*} : \mathrm{H}^1 (G_{\mathbb{Q},S}, T^*) \to \prod_{\ell \in S} \dfrac{ \mathrm{H}^1 (\mathbb{Q}_\ell, T^*) }{ L^{*}_\ell }
\right) .$$

The comparison between two Selmer groups sometimes reduces to the comparison of local conditions under the surjectivity of the global-to-local map defining the smaller Selmer group.
\begin{prop} \label{prop:comparison_selmer}
Let $\mathcal{L}$ and $\mathcal{N}$ be two Selmer structures for $S$ and $A$.
If $L_\ell \subseteq N_\ell$ for all $\ell \in S$, then
$$\mathrm{H}^1_{\mathcal{L}} (G_{\mathbb{Q},S}, A) \subseteq \mathrm{H}^1_{\mathcal{N}} (G_{\mathbb{Q},S}, A) .$$
If we further assume that $\phi_{\mathcal{L}}$ is surjective, then we have
$$\dfrac{ \mathrm{H}^1_{\mathcal{N}} (G_{\mathbb{Q},S}, A) }{ \mathrm{H}^1_{\mathcal{L}} (G_{\mathbb{Q},S}, A) } \simeq \prod_{\ell \in S} \dfrac{ N_v}{ L_v }.$$
\end{prop}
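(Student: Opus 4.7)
The plan is to deduce both claims from a straightforward diagram chase built on the tautological surjection $\prod_{\ell \in S} \mathrm{H}^1(\mathbb{Q}_\ell,A)/L_\ell \twoheadrightarrow \prod_{\ell \in S} \mathrm{H}^1(\mathbb{Q}_\ell,A)/N_\ell$, whose kernel is precisely $\prod_{\ell \in S} N_\ell/L_\ell$ because $L_\ell \subseteq N_\ell$ by hypothesis.

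The first assertion is immediate. If a class $c \in \mathrm{H}^1(G_{\mathbb{Q},S},A)$ lies in $\mathrm{Sel}_{\mathcal{L}}(G_{\mathbb{Q},S},A)$, then $\mathrm{res}_\ell(c) \in L_\ell \subseteq N_\ell$ for every $\ell \in S$, so $c \in \mathrm{Sel}_{\mathcal{N}}(G_{\mathbb{Q},S},A)$. This gives the inclusion without any surjectivity assumption on $\phi_{\mathcal{L}}$.

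For the quotient identification, I would construct an explicit map
\[
\Psi : \mathrm{Sel}_{\mathcal{N}}(G_{\mathbb{Q},S},A) \longrightarrow \prod_{\ell \in S} N_\ell / L_\ell
\]
by composing the restriction $\phi : \mathrm{Sel}_{\mathcal{N}}(G_{\mathbb{Q},S},A) \to \prod_{\ell} N_\ell$ (which lands in this product by definition of $\mathrm{Sel}_{\mathcal{N}}$) with the projection onto $\prod_\ell N_\ell/L_\ell$. By construction $\ker(\Psi) = \mathrm{Sel}_{\mathcal{L}}(G_{\mathbb{Q},S},A)$, so $\Psi$ induces an injection $\mathrm{Sel}_{\mathcal{N}}/\mathrm{Sel}_{\mathcal{L}} \hookrightarrow \prod_{\ell \in S} N_\ell/L_\ell$.

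For surjectivity, here is where the hypothesis that $\phi_{\mathcal{L}}$ is surjective enters. Given $(\overline{x_\ell})_{\ell \in S} \in \prod_\ell N_\ell/L_\ell$, lift to $(x_\ell) \in \prod_\ell N_\ell \subseteq \prod_\ell \mathrm{H}^1(\mathbb{Q}_\ell,A)$, and view its image in $\prod_\ell \mathrm{H}^1(\mathbb{Q}_\ell,A)/L_\ell$. Surjectivity of $\phi_{\mathcal{L}}$ produces $c \in \mathrm{H}^1(G_{\mathbb{Q},S},A)$ with $\phi_{\mathcal{L}}(c) = (x_\ell \bmod L_\ell)$; equivalently $\mathrm{res}_\ell(c) - x_\ell \in L_\ell$ for all $\ell$. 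Since $x_\ell \in N_\ell$ and $L_\ell \subseteq N_\ell$, we obtain $\mathrm{res}_\ell(c) \in N_\ell$ for every $\ell \in S$, so $c \in \mathrm{Sel}_{\mathcal{N}}(G_{\mathbb{Q},S},A)$ and $\Psi(c) = (\overline{x_\ell})$. This establishes the isomorphism. No step is really hard; the only subtlety is being careful that the lift $(x_\ell)$ must be taken inside $\prod_\ell N_\ell$ (not merely in $\prod_\ell \mathrm{H}^1(\mathbb{Q}_\ell,A)$) so that the preimage under $\phi_{\mathcal{L}}$ automatically lands in $\mathrm{Sel}_{\mathcal{N}}$.
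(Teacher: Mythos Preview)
Your proof is correct and follows exactly the approach the paper has in mind: the paper's own proof simply reads ``It immediately follows from the definition. See \cite[Corollary 2.3]{gv},'' and your diagram chase is precisely the standard unwinding of that definition. There is nothing to add.
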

\begin{proof}
It immediately follows from the surjectivity of $\phi_{\mathcal{L}}$.
\end{proof}

The following proposition is the direct limit version of the formula of Greenberg--Wiles \cite[Proposition 1.6]{wiles}, which is an application of the Poitou--Tate exact sequence with Selmer structures. See also \cite[Lemma 2.6]{lundell-level-lowering}.
\begin{prop} \label{prop:greenberg-wiles}
Let $\mathcal{L}$ be a Selmer structure for $S$ and $A$.
Then  
$\mathrm{Sel}_{\mathcal{L}} (G_{\mathbb{Q},S}, A)$ is cofinitely generated and 
$\mathrm{Sel}_{\mathcal{L}^*} (G_{\mathbb{Q},s}, T^*)$ is finitely generated
as $\mathcal{O}$-modules. Moreover, we have an equality
\begin{align*}
&
\mathrm{cork}_{\mathcal{O}} \ \mathrm{H}^1_{\mathcal{L}} (G_{\mathbb{Q},S}, A)
- \mathrm{rk}_{\mathcal{O}} \ \mathrm{H}^1_{\mathcal{L}^*} (G_{\mathbb{Q},S}, T^*) \\
= \ \ &
\mathrm{cork}_{\mathcal{O}} \ \mathrm{H}^0 (G_{\mathbb{Q},S}, A)
- \mathrm{rk}_{\mathcal{O}} \ \mathrm{H}^0 (G_{\mathbb{Q},S}, T^*)
+ 
\sum_{\ell \in S} \left(  \mathrm{cork}_{\mathcal{O}} \ L_\ell
- \mathrm{cork}_{\mathcal{O}} \ \mathrm{H}^0 (\mathbb{Q}_\ell , A)
 \right) .
\end{align*}
\end{prop}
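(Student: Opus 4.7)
The plan is to deduce this direct-limit statement from the classical Greenberg-Wiles formula \cite[Proposition 1.6]{wiles} applied at each finite level $A[\lambda^n]$, and then pass to the limit.

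For each $n \geq 1$, I would introduce $\mathcal{L}_n := (L_{v,n})_{v \in S}$ on $A[\lambda^n]$ by taking $L_{v,n}$ to be the preimage of $L_v$ under $\mathrm{H}^1(\mathbb{Q}_v, A[\lambda^n]) \to \mathrm{H}^1(\mathbb{Q}_v, A)$; under Tate local duality its orthogonal complement $\mathcal{L}_n^*$ lives on $A[\lambda^n]^\vee \simeq T^*/\lambda^n T^*$. Wiles's formula then reads
$$\frac{\#\mathrm{Sel}_{\mathcal{L}_n}(G_{\mathbb{Q},S}, A[\lambda^n])}{\#\mathrm{Sel}_{\mathcal{L}_n^*}(G_{\mathbb{Q},S}, T^*/\lambda^n)} = \frac{\#\mathrm{H}^0(G_{\mathbb{Q},S}, A[\lambda^n])}{\#\mathrm{H}^0(G_{\mathbb{Q},S}, T^*/\lambda^n)} \cdot \prod_{v \in S} \frac{\#L_{v,n}}{\#\mathrm{H}^0(\mathbb{Q}_v, A[\lambda^n])}.$$
Since $S$ is finite and $A[\lambda]$ is a finite $G_{\mathbb{Q},S}$-module, $\mathrm{H}^1(G_{\mathbb{Q},S}, A[\lambda])$ is finite; chasing the long exact sequence attached to multiplication by $\lambda$ on $A$ then shows that $\mathrm{H}^1(G_{\mathbb{Q},S}, A)$ is cofinitely generated over $\mathcal{O}$ and, dually via Poitou-Tate, that $\mathrm{H}^1(G_{\mathbb{Q},S}, T^*)$ is finitely generated; the Selmer subgroups inherit these finiteness properties.

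The final step is to extract the rank identity from the $n$-dependent size identity. By construction one has $\mathrm{Sel}_{\mathcal{L}}(G_{\mathbb{Q},S}, A) = \varinjlim_n \mathrm{Sel}_{\mathcal{L}_n}(G_{\mathbb{Q},S}, A[\lambda^n])$, and dually $\mathrm{Sel}_{\mathcal{L}^*}(G_{\mathbb{Q},S}, T^*) = \varprojlim_n \mathrm{Sel}_{\mathcal{L}_n^*}(G_{\mathbb{Q},S}, T^*/\lambda^n)$. For any cofinitely generated $\mathcal{O}$-module $M$, writing $q := \#\mathbb{F}$, one has $\mathrm{cork}_{\mathcal{O}} M = \lim_{n \to \infty} (\log_q \#M[\lambda^n])/n$, with the analogous formula holding for ranks via inverse limits, and this also applies to each local factor $L_{v,n}$ and $\mathrm{H}^0(\mathbb{Q}_v, A[\lambda^n])$. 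Taking $\log_q$ of the displayed identity, dividing by $n$, and letting $n \to \infty$ converts the cardinality equality term-by-term into the claimed identity of coranks and ranks.

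The main delicate point is the compatibility of local conditions with direct and inverse limits: one must verify that $\varinjlim_n L_{v,n} = L_v$ and that the Tate pairing identifies $\varprojlim_n L_{v,n}^\perp$ with the expected dual local condition. In our setting each $L_v$ is cut out by a local Galois deformation subfunctor as in $\S$\ref{subsec:local-global-deformation}, so $L_v$ is literally the direct limit of its $\lambda^n$-torsion pieces by construction, and the compatibility with Tate duality reduces to the standard continuity of the local Tate pairing between pro-finite and discrete Galois cohomology.
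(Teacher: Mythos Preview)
The paper does not supply a proof of this proposition; it simply records it as the direct-limit version of the Greenberg--Wiles formula \cite[Proposition~1.6]{wiles}, with a further reference to \cite[Lemma~2.6]{lundell-level-lowering}. Your argument carries out exactly this reduction---applying the finite-level formula to $A[\lambda^n]$ and extracting coranks and ranks by dividing $\log_q$ of the cardinalities by $n$ and letting $n\to\infty$---which is the intended approach.
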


In order to have the surjectivity of the global-to-local map defining Selmer groups, the local conditions should be ``well-balanced" as follows. This is \cite[Proposition 2.7]{lundell-level-lowering} and we include the proof for the completeness.
\begin{prop} \label{prop:surjectivity-of-Selmer}
Assume that $A[\lambda]$ and $T^*/ \lambda T^*$ are irreducible as $G_{\mathbb{Q},S}$-modules.
If
$\mathrm{Sel}_{\mathcal{L}} (G_{\mathbb{Q},S}, A)$ is finite, and
$$\sum_{\ell \in S}   \mathrm{cork}_{\mathcal{O}} \ L_\ell
= \sum_{\ell \in S}  \mathrm{cork}_{\mathcal{O}} \ \mathrm{H}^0 (\mathbb{Q}_\ell , A),$$
then the global-to-local map $\phi_{\mathcal{L}}$ is surjective.
\end{prop}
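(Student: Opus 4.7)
The plan is to show that $\mathrm{Sel}_{\mathcal{L}^*}(G_{\mathbb{Q},S}, T^*)$ vanishes; surjectivity of $\phi_{\mathcal{L}}$ is then immediate. To make the reduction precise, I would apply the nine-term Poitou--Tate exact sequence to each finite quotient $A[\lambda^n]$ and pass to the direct limit to obtain the Selmer four-term sequence
\begin{equation*}
0 \to \mathrm{Sel}_{\mathcal{L}}(G_{\mathbb{Q},S}, A) \to \mathrm{H}^1(G_{\mathbb{Q},S}, A) \xrightarrow{\phi_{\mathcal{L}}} \bigoplus_{\ell \in S} \dfrac{\mathrm{H}^1(\mathbb{Q}_\ell, A)}{L_\ell} \to \mathrm{Sel}_{\mathcal{L}^*}(G_{\mathbb{Q},S}, T^*)^\vee.
\end{equation*}
Once the final term is trivial, exactness at the third spot forces $\phi_{\mathcal{L}}$ to be surjective, as desired.

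First I would bound the $\mathcal{O}$-rank of $\mathrm{Sel}_{\mathcal{L}^*}(G_{\mathbb{Q},S}, T^*)$ using Proposition \ref{prop:greenberg-wiles}. The irreducibility of $A[\lambda]$ as a $G_{\mathbb{Q},S}$-module (which in our applications has $\mathbb{F}$-dimension at least two, hence no non-zero invariants) combined with the $\lambda$-divisibility of $A$ gives $\mathrm{H}^0(G_{\mathbb{Q},S}, A) = 0$. Similarly, the irreducibility of $T^*/\lambda T^*$ gives $\mathrm{H}^0(G_{\mathbb{Q},S}, T^*) = 0$. Plugging these vanishings together with the balancing hypothesis $\sum_{\ell \in S} \mathrm{cork}_{\mathcal{O}} L_\ell = \sum_{\ell \in S} \mathrm{cork}_{\mathcal{O}} \mathrm{H}^0(\mathbb{Q}_\ell, A)$ into the Greenberg--Wiles identity kills the entire right-hand side. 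The finiteness of $\mathrm{Sel}_{\mathcal{L}}(G_{\mathbb{Q},S}, A)$, which makes its $\mathcal{O}$-corank zero, then forces $\mathrm{rk}_{\mathcal{O}} \mathrm{Sel}_{\mathcal{L}^*}(G_{\mathbb{Q},S}, T^*) = 0$.

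To upgrade rank zero to outright vanishing, I would verify that $\mathrm{Sel}_{\mathcal{L}^*}(G_{\mathbb{Q},S}, T^*)$ is $\lambda$-torsion-free. The long exact $G_{\mathbb{Q},S}$-cohomology sequence attached to $0 \to T^* \xrightarrow{\lambda} T^* \to T^*/\lambda T^* \to 0$ realises $\mathrm{H}^1(G_{\mathbb{Q},S}, T^*)[\lambda]$ as a quotient of $\mathrm{H}^0(G_{\mathbb{Q},S}, T^*/\lambda T^*)$, and the latter vanishes by irreducibility; hence any submodule of $\mathrm{H}^1(G_{\mathbb{Q},S}, T^*)$, in particular the dual Selmer group, is $\lambda$-torsion-free. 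A torsion-free $\mathcal{O}$-module of rank zero is trivial, completing the argument.

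The main obstacle is essentially bookkeeping: one must check that the nine-term Poitou--Tate sequence descends cleanly to the Selmer four-term sequence above after passing to the direct limit in $A[\lambda^n]$, and that the irreducibility hypotheses in the statement genuinely yield the required $\mathrm{H}^0$-vanishings for $A$, $T^*$, and $T^*/\lambda T^*$ (which, outside the degenerate one-dimensional trivial case, is automatic in the adjoint setting where the paper applies this proposition).
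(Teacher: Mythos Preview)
Your proposal is correct and follows essentially the same route as the paper: first use the Greenberg--Wiles formula together with the balancing hypothesis and the finiteness of $\mathrm{Sel}_{\mathcal{L}}$ to force $\mathrm{rk}_{\mathcal{O}}\,\mathrm{Sel}_{\mathcal{L}^*}(G_{\mathbb{Q},S},T^*)=0$, then use the irreducibility of $T^*/\lambda T^*$ to see that $\mathrm{H}^1(G_{\mathbb{Q},S},T^*)$ is $\mathcal{O}$-torsion-free, whence the dual Selmer group vanishes, and finally deduce surjectivity of $\phi_{\mathcal{L}}$ from Poitou--Tate. The only difference is packaging: the paper cites \cite[\S2.2]{greenberg-surjectivity} for the torsion-freeness and \cite[Proposition~3.1.1]{greenberg-surjectivity} for the passage from vanishing dual Selmer to surjectivity, whereas you write out both steps directly (the long exact sequence for $0\to T^*\xrightarrow{\lambda}T^*\to T^*/\lambda T^*\to 0$ and the four-term Selmer sequence from Poitou--Tate).
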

\begin{proof}
By Proposition \ref{prop:greenberg-wiles}, 
the finiteness of $\mathrm{H}^1_{\mathcal{L}} (G_{\mathbb{Q},S}, A)$ implies 
$\mathrm{rk}_{\mathcal{O}} \ \mathrm{H}^1_{\mathcal{L}^*} (G_{\mathbb{Q},S}, T^*) = 0$, so 
$\mathrm{H}^1_{\mathcal{L}^*} (G_{\mathbb{Q},S}, T^*)$ is also finite.
Thus, $\mathrm{H}^1_{\mathcal{L}^*} (G_{\mathbb{Q},S}, T^*)$ is contained in the $\mathcal{O}$-torsion of $\mathrm{H}^1 (G_{\mathbb{Q},S}, T^*)$. Due to the irreducibility assumption, $\mathrm{H}^1 (G_{\mathbb{Q},S}, T^*)$ is torsion free as an $\mathcal{O}$-module following the argument in \cite[$\S$2.2]{greenberg-surjectivity}. Thus, $\mathrm{H}^1_{\mathcal{L}^*} (G_{\mathbb{Q},S}, T^*) = 0$. By \cite[Proposition 3.1.1]{greenberg-surjectivity}, $\phi_{\mathcal{L}}$ is surjective.
\end{proof}

\subsection{Local computation}
We quickly recall some materials in $\S$\ref{subsubsec:new}.
Let $f$ be a newform such that $\rho_f \simeq \rho$ such that $\rho \vert_{G_{\mathbb{Q}_\ell}}$ is an $\ell$-new deformation of $\overline{\rho} \vert_{G_{\mathbb{Q}_\ell}}$.
More explicitly,
$\overline{\rho} \vert_{G_{\mathbb{Q}_\ell}}$ is unramified but $\rho \vert_{G_{\mathbb{Q}_\ell}}$ ramified and is twist-equivalent to
$$\begin{pmatrix}
\mathbf{1} & \xi \\
0 & \chi^{-1}_{\mathrm{cyc}}
\end{pmatrix}$$
where $\xi \in Z^1(\mathbb{Q}_\ell, \mathcal{O}(1-k/2))$.
Then $\mathrm{ad}^0( \rho  ) \vert_{G_{\mathbb{Q}_\ell}}$ is equivalent to
$$
\begin{pmatrix}
\chi_{\mathrm{cyc}} & - 2\xi \chi_{\mathrm{cyc}} & -\xi^2 \chi_{\mathrm{cyc}} \\
0 & \mathbf{1}  & \xi  \\
0 & 0 &  \chi^{-1}_{\mathrm{cyc}}
\end{pmatrix} .
$$
Let $M = \mathrm{ad}^0(\rho) \vert_{G_{\mathbb{Q}_\ell}} \otimes E/\mathcal{O}$ with $G_{\mathbb{Q}_\ell}$-stable filtration
$$0 \subsetneq M_2 \subsetneq M_1 \subsetneq M_0 .$$
More explicitly, recall the equation (\ref{eqn:ad^0_f_explicit_matrix})
$$\begin{pmatrix}
\chi_{\mathrm{cyc}} & - 2\xi \chi_{\mathrm{cyc}} & -\xi^2 \chi_{\mathrm{cyc}} \\
0 & \mathbf{1}  & \xi  \\
0 & 0 &  \chi^{-1}_{\mathrm{cyc}}
\end{pmatrix} 
\begin{pmatrix}
a \\
b \\
c
\end{pmatrix}
=
\begin{pmatrix}
\chi_{\mathrm{cyc}} \cdot a  - 2\xi \chi_{\mathrm{cyc}} \cdot b  -\xi^2 \chi_{\mathrm{cyc}} \cdot c \\
 b + \xi  \cdot c \\
\chi^{-1}_{\mathrm{cyc}} \cdot c
\end{pmatrix} .
$$
Since $M_2$ is generated by ${}^{t}\begin{pmatrix}
1 &
0 &
0
\end{pmatrix}
$ and $M_1$ is generated by 
${}^{t}\begin{pmatrix}
1 &
0 &
0
\end{pmatrix}$
and 
${}^{t}\begin{pmatrix}
0 &
1 &
0
\end{pmatrix}$, we have the following computation
\begin{align} \label{eqn:counting_H^0}
\begin{split}
\mathrm{H}^0(\mathbb{Q}_\ell, M_2[\lambda^n]) & \simeq \lambda^{-s}\mathcal{O}/\mathcal{O} \\
\mathrm{H}^0(\mathbb{Q}_\ell, M_0[\lambda^n]) & \simeq \lambda^{-s}\mathcal{O}/\mathcal{O} \oplus \lambda^{-t}\mathcal{O}/\mathcal{O} \oplus   \lambda^{-\mathrm{min}(s,t)}\mathcal{O}/\mathcal{O} \\
\mathrm{H}^0(\mathbb{Q}_\ell, M_0[\lambda^n]/M_2[\lambda^n]) & \simeq \lambda^{-n}\mathcal{O}/\mathcal{O} \oplus \lambda^{-\mathrm{min}(s,t)}\mathcal{O}/\mathcal{O}
\end{split}
\end{align}
for $n \gg 0$
where 
\begin{itemize}
\item
$s$ is the largest integer such that $\chi_{\mathrm{cyc}}(g) c \equiv c \pmod{\lambda^s}$, i.e. $s = \mathrm{ord}_{\lambda}(\ell - 1)$, and
\item 
$t$ is the largest integer such that $\rho_t \vert_{G_{\mathbb{Q}_\ell}}$ is semi-simple, i.e. $t = t_f(\ell)$ (Definition \ref{defn:tamagawa_exponents}).
\end{itemize}
Since the cohomological dimension of $G_{\mathbb{Q}_\ell}/I_\ell$ is one, the inflation-restriction sequence yields the exact sequence
\[
\xymatrix{
0 \ar[r] & \mathrm{H}^1(G_{\mathbb{Q}_\ell}/I_\ell, M^{I_\ell}_2) 
\ar[r] & \mathrm{H}^1(\mathbb{Q}_\ell, M_2) 
\ar[r] & \mathrm{H}^1(I_\ell, M_2)^{G_{\mathbb{Q}_\ell}/I_\ell} \ar[r] & 0 .
}
\]
Here, we have
\begin{align*}
\mathrm{H}^1(G_{\mathbb{Q}_\ell}/I_\ell, M^{I_\ell}_2) & \simeq M^{I_\ell}_2 / (\mathrm{Frob}_\ell - 1) M^{I_\ell}_2 \\
& \simeq  M_2 / (\mathrm{Frob}_\ell - 1) M_2 \\
& \simeq 0
\end{align*}
where the first isomorphism follows from that $G_{\mathbb{Q}_\ell}/I_\ell$ is topologically generated by $\mathrm{Frob}_\ell$,
the second isomorphism follows from $M^{I_\ell}_2 = M_2$ (since $\ell \neq p$), and
the last isomorphism follows from that $\mathrm{Frob}_\ell - 1$ acts on $M_2$ as multiplication by $\ell - 1$ and $M_2$ is divisible.
Thus, we have $\mathrm{H}^1(\mathbb{Q}_\ell, M_2) \simeq \mathrm{H}^1(I_\ell, M_2)^{G_{\mathbb{Q}_\ell}/I_\ell}$.
Since $I_\ell$ acts on $M_2$ trivially, we have
\begin{align} \label{eqn:H^1_M_2}
\begin{split}
\mathrm{H}^1(\mathbb{Q}_\ell, M_2) & \simeq \mathrm{H}^1(I_\ell, M_2)^{G_{\mathbb{Q}_\ell}/I_\ell} \\
& \simeq  \mathrm{Hom}_{{G_{\mathbb{Q}_\ell}/I_\ell}}(I_\ell, M_2) \\
& \simeq \mathrm{Hom}_{{G_{\mathbb{Q}_\ell}/I_\ell}}(\mathbb{Z}_p(1), E/\mathcal{O}(1)) \\
& \simeq \mathrm{Hom}_{{G_{\mathbb{Q}_\ell}/I_\ell}}(\mathcal{O}(1), E/\mathcal{O}(1)) \\
& \simeq \mathrm{Hom}_{\mathcal{O}}(\mathcal{O}, E/\mathcal{O}) \\
& \simeq E/\mathcal{O}
\end{split}
\end{align}
From the short exact sequence
\[
\xymatrix{
0 \ar[r] & M_2 \ar[r]  & M_0 \ar[r]  & M_0/M_2 \ar[r] & 0 ,
}
\]
we obtain the exact sequence
\[
\xymatrix{
0 \ar[r] & \mathrm{H}^0(\mathbb{Q}_\ell,  M_2) \ar[r]  & \mathrm{H}^0(\mathbb{Q}_\ell,  M_0 ) \ar[r]  & \mathrm{H}^0(\mathbb{Q}_\ell,  M_0/M_2 ) \ar[r]^-{\delta} & \mathrm{H}^1(\mathbb{Q}_\ell,  M_2) \ar[r] & \mathrm{H}^1(\mathbb{Q}_\ell,  M_0) .
}
\]
By counting the size of each term as in (\ref{eqn:counting_H^0}) and (\ref{eqn:H^1_M_2}), the connecting map $\delta$ becomes the multiplication by $\lambda^{t}$ on $E/\mathcal{O}$ (up to a unit), which is surjective. Thus, 
the image of $\mathrm{H}^1(\mathbb{Q}_\ell,  M_2)$ in $\mathrm{H}^1(\mathbb{Q}_\ell,  M_0)$ vanishes.
It proves Proposition \ref{prop:local_computation}.(1) below.

Recall the inflation-restriction sequence
\[
\xymatrix{
0 \ar[r] & \mathrm{H}^1(G_{\mathbb{Q}_\ell}/I_\ell, M^{I_\ell}_0) 
\ar[r] & \mathrm{H}^1(\mathbb{Q}_\ell, M_0) 
\ar[r] & \mathrm{H}^1(I_\ell, M_0)^{G_{\mathbb{Q}_\ell}/I_\ell} \ar[r] & 0 .
}
\]
Since the action of $I_\ell$ on $M_0$ factors through the tame quotient $I^t_\ell$, we have 
$$\mathrm{H}^1(I_\ell, M_0) \simeq \mathrm{H}^1(I^t_\ell, M_0) \simeq M_0 / (\tau - 1)M_0$$
where $\tau$ is a topological generator of $I^t_\ell$.
By using Equation (\ref{eqn:ad^0_f_explicit_matrix}), we check $(\tau - 1)M_0  = M_1$.
Therefore, $\mathrm{H}^1(I_\ell, M_0) \simeq M_2/M_1 \simeq E/\mathcal{O}(-1)$.
Finally, we have $\mathrm{H}^1(I_\ell, M_0)^{G_{\mathbb{Q}_\ell}/I_\ell} \simeq \lambda^{-s}\mathcal{O}/\mathcal{O}$.

Note that 
$\mathrm{H}^1(G_{\mathbb{Q}_\ell}/I_\ell, M^{I_\ell}_0) \simeq  M^{I_\ell}_0 / (\mathrm{Frob}_\ell - 1)M^{I_\ell}_0$.
Then by using Equation (\ref{eqn:counting_H^0}) again, we have
$$M^{I_\ell}_0 \simeq E/\mathcal{O}(1) \oplus \lambda^{-t}\mathcal{O}/\mathcal{O} \oplus \lambda^{-t}\mathcal{O}/\mathcal{O}(-1)$$
as $G_{\mathbb{Q}_\ell}/I_\ell$-modules.
Thus, we also have
\begin{align*}
(\mathrm{Frob}_\ell - 1) M^{I_\ell}_0  & \simeq (\ell - 1) E/\mathcal{O} \oplus 0 \oplus (\ell^{-1} - 1) \lambda^{-t}\mathcal{O}/\mathcal{O} \\
 & \simeq E/\mathcal{O} \oplus 0 \oplus \lambda^{s-t}\mathcal{O}/\mathcal{O}
\end{align*}
as $\mathcal{O}$-modules.
Thus, $\mathrm{H}^1(G_{\mathbb{Q}_\ell}/I_\ell, M^{I_\ell}_0) \simeq \lambda^{-t}\mathcal{O}/\mathcal{O} \oplus \lambda^{-\mathrm{min}(s,t)}\mathcal{O}/\mathcal{O}$ as $\mathcal{O}$-modules, so Proposition \ref{prop:local_computation}.(2) below follows.
\begin{prop} \label{prop:local_computation}
Let $f$ be a newform such that $\rho_f \simeq \rho$ and $\rho \vert_{G_{\mathbb{Q}_\ell}}$ is an $\ell$-new deformation of $\overline{\rho} \vert_{G_{\mathbb{Q}_\ell}}$. Then we have the following statements:
\begin{enumerate}
\item The local condition
$$L_\ell = \mathrm{Im} \left( \mathrm{H}^1(\mathbb{Q}_\ell, M_2) \to \mathrm{H}^1(\mathbb{Q}_\ell, \mathrm{ad}^0(f) \otimes E/\mathcal{O}) \right)$$
vanishes.
\item $\mathrm{H}^1(\mathbb{Q}_\ell, \mathrm{ad}^0(f) \otimes E/\mathcal{O}))$ is a non-trivial finite $\mathcal{O}$-module and
$$\mathrm{length}_{\mathcal{O}} \mathrm{H}^1(\mathbb{Q}_\ell, \mathrm{ad}^0(f) \otimes E/\mathcal{O})) = \mathrm{ord}_\lambda(\ell - 1) + t_f(\ell) + \mathrm{min} (\mathrm{ord}_\lambda(\ell - 1), t_f(\ell)).$$
\end{enumerate}
\end{prop}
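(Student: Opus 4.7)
The plan is to leverage the explicit three-step filtration $0 \subsetneq M_2 \subsetneq M_1 \subsetneq M_0 = M$ recorded above, together with the shape of the Galois action on each graded piece. Throughout, write $s := \mathrm{ord}_\lambda(\ell - 1)$ and $t := t_f(\ell)$.

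For (1), I would apply the long exact sequence attached to $0 \to M_2 \to M_0 \to M_0/M_2 \to 0$. Two ingredients drive the argument. First, $\mathrm{H}^1(\mathbb{Q}_\ell, M_2) \simeq E/\mathcal{O}$ by an inflation-restriction computation that uses the trivial $I_\ell$-action on $M_2 \simeq E/\mathcal{O}(1)$ together with the fact that $\mathrm{Frob}_\ell - 1$ acts on $M_2$ by multiplication by the divisibly-invertible $\ell - 1$. Second, the sizes of $\mathrm{H}^0(\mathbb{Q}_\ell, M_2)$, $\mathrm{H}^0(\mathbb{Q}_\ell, M_0)$, and $\mathrm{H}^0(\mathbb{Q}_\ell, M_0/M_2)$ computed above force, by a direct count, the connecting map $\delta : \mathrm{H}^0(\mathbb{Q}_\ell, M_0/M_2) \to \mathrm{H}^1(\mathbb{Q}_\ell, M_2)$ to be surjective; concretely, $\delta$ restricts to multiplication by $\lambda^{t}$ up to a unit on the divisible $E/\mathcal{O}$-summand, which is surjective. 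Since the image of $\mathrm{H}^1(\mathbb{Q}_\ell, M_2) \to \mathrm{H}^1(\mathbb{Q}_\ell, M_0)$ is the cokernel of $\delta$, it vanishes, and postcomposing with $\mathrm{H}^1(\mathbb{Q}_\ell, M_0) \to \mathrm{H}^1(\mathbb{Q}_\ell, M)$ yields $L_\ell = 0$.

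For (2), I would split the length computation using the inflation-restriction sequence
\[
0 \to \mathrm{H}^1(G_{\mathbb{Q}_\ell}/I_\ell, M^{I_\ell}) \to \mathrm{H}^1(\mathbb{Q}_\ell, M) \to \mathrm{H}^1(I_\ell, M)^{G_{\mathbb{Q}_\ell}/I_\ell} \to 0,
\]
which is exact because $\mathrm{cd}(G_{\mathbb{Q}_\ell}/I_\ell) = 1$. Since $\ell \neq p$, the wild inertia acts trivially, so $\mathrm{H}^1(I_\ell, M)$ equals the $\tau$-coinvariants of $M$ for a tame generator $\tau$; the explicit matrix shows $(\tau - 1) M = M_1$, hence $\mathrm{H}^1(I_\ell, M) \cong M/M_1 \simeq E/\mathcal{O}(-1)$, whose $\mathrm{Frob}_\ell$-invariants have length $s$. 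The complementary piece $\mathrm{H}^1(G_{\mathbb{Q}_\ell}/I_\ell, M^{I_\ell})$ is the Frobenius coinvariants of $M^{I_\ell} \simeq E/\mathcal{O}(1) \oplus \lambda^{-t}\mathcal{O}/\mathcal{O} \oplus \lambda^{-t}\mathcal{O}/\mathcal{O}(-1)$; the three summands contribute $0$, $t$, and $\mathrm{min}(s,t)$, the last via the observation that $\mathrm{Frob}_\ell - 1$ acts as $\ell^{-1} - 1$ there. Summing yields the total length $s + t + \mathrm{min}(s, t)$.

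The delicate step will be pinning down $M^{I_\ell}$ as a $G_{\mathbb{Q}_\ell}/I_\ell$-module, and in particular verifying that the three graded pieces of the filtration reassemble into the claimed direct sum with characters $\chi_{\mathrm{cyc}}$, $\mathbf{1}$, $\chi_{\mathrm{cyc}}^{-1}$ on taking $I_\ell$-invariants. This hinges on the precise semisimplicity threshold of $\rho\vert_{I_\ell}$ modulo $\lambda^{t+1}$ (i.e.\ the very definition of the Tamagawa exponent $t_f(\ell)$) together with the fact that all ramification is encoded in the strict upper-triangular cocycle $\xi$; once those bookkeeping identifications are in place, both assertions drop out of the counts above.
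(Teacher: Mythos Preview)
Your proposal is correct and follows essentially the same route as the paper: the same long exact sequence for $0 \to M_2 \to M_0 \to M_0/M_2 \to 0$ and size count to force $\delta$ onto, and the same inflation--restriction decomposition of $\mathrm{H}^1(\mathbb{Q}_\ell, M)$ with the explicit identification $M^{I_\ell} \simeq E/\mathcal{O}(1) \oplus \lambda^{-t}\mathcal{O}/\mathcal{O} \oplus \lambda^{-t}\mathcal{O}/\mathcal{O}(-1)$. One cosmetic remark: your ``postcomposing with $\mathrm{H}^1(\mathbb{Q}_\ell, M_0) \to \mathrm{H}^1(\mathbb{Q}_\ell, M)$'' is vacuous, since in the paper's notation $M_0 = M$.
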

\begin{cor} \label{cor:local_computation}
Keep all the assumptions of Proposition \ref{prop:local_computation}.
If we further assume that $\ell \not\equiv 1 \pmod{p}$ for all primes $\ell$ dividing $N^-/N(\overline{\rho})^-$, then
$$\mathrm{length}_{\mathcal{O}} \mathrm{H}^1(\mathbb{Q}_\ell, \mathrm{ad}^0(f) \otimes E/\mathcal{O})) =  t_f(\ell).$$
\end{cor}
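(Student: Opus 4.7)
The plan is to derive this as an immediate specialization of Proposition \ref{prop:local_computation}.(2), so the bulk of the real work has already been done in the preceding local analysis; the corollary is extracted by plugging in the numerical consequence of the hypothesis $\ell \not\equiv 1 \pmod{p}$. First I would note that, under the standing assumptions, a prime $\ell$ dividing $N^-/N(\overline{\rho})^-$ is one at which $\overline{\rho}$ is unramified but $\rho_f$ is ramified, so $\rho_f\vert_{G_{\mathbb{Q}_\ell}}$ is genuinely an $\ell$-new deformation in the sense of Definition \ref{defn:new_deformations}, putting us in precisely the hypothesis of Proposition \ref{prop:local_computation}.

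Next I would translate the condition $\ell \not\equiv 1 \pmod{p}$ into a statement about $\lambda$-adic valuations. Since $\mathcal{O} = \mathcal{O}_E$ is a finite extension of $\mathbb{Z}_p$ and $\ell - 1 \in \mathbb{Z}$ is coprime to $p$, the integer $\ell - 1$ is a unit in $\mathcal{O}$, hence $\mathrm{ord}_\lambda(\ell - 1) = 0$. Substituting this into the explicit formula
\[
\mathrm{length}_{\mathcal{O}} \, \mathrm{H}^1(\mathbb{Q}_\ell, \mathrm{ad}^0(f) \otimes E/\mathcal{O}) = \mathrm{ord}_\lambda(\ell - 1) + t_f(\ell) + \mathrm{min}(\mathrm{ord}_\lambda(\ell - 1), t_f(\ell))
\]
of Proposition \ref{prop:local_computation}.(2) collapses the first and third summands to zero, leaving exactly $t_f(\ell)$. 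There is no real obstacle: the hypothesis $\ell \not\equiv 1 \pmod{p}$ is designed precisely to kill the ``extra" contributions (coming from $\mathrm{H}^0(\mathbb{Q}_\ell, E/\mathcal{O}(1))$ and its dual) that obstruct a clean Tamagawa-exponent reading of the local Euler factor, so the computation is a one-line substitution once Proposition \ref{prop:local_computation} is in hand.
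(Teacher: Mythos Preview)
Your proposal is correct and matches the paper's intent exactly: the paper states the corollary immediately after Proposition~\ref{prop:local_computation} with no proof, treating it as the obvious specialization obtained by setting $\mathrm{ord}_\lambda(\ell-1)=0$ in the formula of part~(2). Your unpacking of why $\ell \mid N^-/N(\overline{\rho})^-$ places us in the $\ell$-new setting and why $\ell \not\equiv 1 \pmod{p}$ forces $\mathrm{ord}_\lambda(\ell-1)=0$ is precisely the implicit reasoning.
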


\subsection{Global computation} \label{subsec:global_computation}
Let $\pi_{f^{\Sigma}} : \mathbb{T}(N_{\overline{\rho},\Sigma}/r)_{ \mathfrak{m}_{\Sigma} } \to \mathcal{O}$ be the map associated to $f^{\Sigma}$ as defined in $\S$\ref{subsec:modularity_lifting_congruence_ideals}.
Let
 $\wp_{f^{\Sigma}} = \mathrm{ker} \left(\pi_{f^{\Sigma}} \right)$, and denote by
 $\Phi_{f^{\Sigma}}$ the cotangent space at $\wp_{f^{\Sigma}}$ and
 by $\eta_{f^{\Sigma}} (N_{\overline{\rho},\Sigma}/r)$ the congruence ideal.
Then we have
\begin{align*}
\mathrm{ord}_\lambda \eta_{f^{\Sigma}}(N_{\overline{\rho},\Sigma}/r)
& = \mathrm{length}_{\mathcal{O}} \Phi_{f^{\Sigma}}\\
& =  \mathrm{length}_{\mathcal{O}} \mathrm{Sel}_{\Sigma} (\mathbb{Q}, \mathrm{ad}^0(f) \otimes E/\mathcal{O})
\end{align*}
where the first equality follows from 
Theorem \ref{thm:diamond-flach-guo-dimitrov}, Proposition \ref{prop:complete_intersection_congruence_ideals}, and Remark \ref{rem:complete_intersection_congruence_ideals}
and the second equality follows from  (\ref{eqn:tangent-selmer}).

Let $\pi^{N^-}_{f^{\Sigma^+}} : \mathbb{T}(N^+_{\overline{\rho},\Sigma^+}/r \cdot N^-)^{N^-}_{ \mathfrak{m}_{\Sigma^+} } \to \mathcal{O}$ be the  $N^-$-new quotient map and $f^{\Sigma^+}$ as defined in proof of Theorem \ref{thm:refined_R=T}.
Let
 $\wp^{N^-}_{f^{\Sigma^+}} = \mathrm{ker} \left( \pi^{N^-}_{f^{\Sigma^+}} \right)$, and
 denote by $\Phi^{N^-}_{f^{\Sigma^+}}$ the cotangent space at $\wp^{N^-}_{f^{\Sigma^+}}$ and
by  $\eta_{f^{\Sigma^+}} (N^+_{\overline{\rho},\Sigma^+}/r, N^-)$ by the congruence ideal.
Then we have
\begin{align*}
 \mathrm{ord}_\lambda \eta_{f^{\Sigma^+}}(N_{\overline{\rho},\Sigma^+}/r, N^-) & = \mathrm{length}_{\mathcal{O}} \Phi^{N^-}_{f^{\Sigma^+}} \\
& = \mathrm{length}_{\mathcal{O}} \mathrm{Sel}^{\Sigma^-}_{\Sigma^+} (\mathbb{Q}, \mathrm{ad}^0(f) \otimes E/\mathcal{O}) 
\end{align*}
where the first equality follows from 
 Proposition \ref{prop:complete_intersection_congruence_ideals}, Remark \ref{rem:complete_intersection_congruence_ideals}, and Theorem \ref{thm:refined_R=T}
 and the second equality follows from (\ref{eqn:tangent-selmer}).

Now it suffices to compute the difference between 
$\mathrm{Sel}^{\Sigma^-}_{\Sigma^+} (\mathbb{Q}, \mathrm{ad}^0(f) \otimes E/\mathcal{O})$
and
$\mathrm{Sel}_{\Sigma} (\mathbb{Q}, \mathrm{ad}^0(f) \otimes E/\mathcal{O})$.
Note that $$\mathrm{Sel}^{\Sigma^-}_{\Sigma^+} (\mathbb{Q}, \mathrm{ad}^0(f) \otimes E/\mathcal{O}) \subseteq \mathrm{Sel}_{\Sigma} (\mathbb{Q}, \mathrm{ad}^0(f) \otimes E/\mathcal{O}) .$$
Consider the following ``small" Selmer group $\mathrm{Sel}_{\mathcal{L}} (\mathbb{Q}, \mathrm{ad}^0(f) \otimes E/\mathcal{O}) $
satisfying
\begin{itemize}
\item $L_p = \mathrm{H}^1_{\mathrm{ur}} (\mathbb{Q}_p, \mathrm{ad}^0(f) \otimes E/\mathcal{O})$,
\item $L_\ell = \mathrm{H}^1_{\mathrm{min}}(\mathbb{Q}_\ell, \mathrm{ad}^0(f) \otimes E/\mathcal{O})$ if $\ell \not\in \Sigma^+ \cup \Sigma^- \cup \lbrace  p \rbrace$,
\item $L_\ell = \mathrm{H}^1_{\mathrm{new}}(\mathbb{Q}_\ell, \mathrm{ad}^0(f) \otimes E/\mathcal{O})$ if $\ell \in \Sigma^-$, and
\item $L_\ell = \mathrm{H}^1_{\mathrm{ur}}(\mathbb{Q}_\ell, \mathrm{ad}^0(f) \otimes E/\mathcal{O})$ if $\ell \in \Sigma^+$.
\end{itemize}
Then we have
$$\mathrm{Sel}_{\mathcal{L}} (\mathbb{Q}, \mathrm{ad}^0(f) \otimes E/\mathcal{O}) \subseteq \mathrm{Sel}^{\Sigma^-}_{\Sigma^+} (\mathbb{Q}, \mathrm{ad}^0(f) \otimes E/\mathcal{O}) \subseteq \mathrm{Sel}_{\Sigma} (\mathbb{Q}, \mathrm{ad}^0(f) \otimes E/\mathcal{O})$$
and $\phi_{\mathcal{L}}$ is surjective by Proposition \ref{prop:surjectivity-of-Selmer} and the properties of local deformation conditions in $\S$\ref{subsec:local_deformation_conditions}.
By Proposition \ref{prop:comparison_selmer}, we have
\begin{align*}
\dfrac{\mathrm{Sel}^{\Sigma^-}_{\Sigma^+} (\mathbb{Q}, \mathrm{ad}^0(f) \otimes E/\mathcal{O})}{\mathrm{Sel}_{\mathcal{L}} (\mathbb{Q}, \mathrm{ad}^0(f) \otimes E/\mathcal{O})} & \simeq \dfrac{\mathrm{H}^1_{f}}{\mathrm{H}^1_{\mathrm{ur}}} \times \prod_{\ell \in \Sigma^+} \dfrac{\mathrm{H}^1}{\mathrm{H}^1_{\mathrm{ur}}} , \\
\dfrac{\mathrm{Sel}_{\Sigma} (\mathbb{Q}, \mathrm{ad}^0(f) \otimes E/\mathcal{O})}{\mathrm{Sel}_{\mathcal{L}} (\mathbb{Q}, \mathrm{ad}^0(f) \otimes E/\mathcal{O})} & \simeq \dfrac{\mathrm{H}^1_{f}}{\mathrm{H}^1_{\mathrm{ur}}} \times \prod_{\ell \in \Sigma^+} \dfrac{\mathrm{H}^1}{\mathrm{H}^1_{\mathrm{ur}}}
\times \prod_{\ell \in \Sigma^-} \dfrac{\mathrm{H}^1}{\mathrm{H}^1_{\mathrm{new}}} 
\end{align*}
where $\mathrm{H}^1_{f}$, $\mathrm{H}^1_{\mathrm{ur}}$, $\mathrm{H}^1$, and $\mathrm{H}^1_{\mathrm{new}}$ are obvious abbreviations. Thus, we have
$$\dfrac{\mathrm{Sel}_{\Sigma} (\mathbb{Q}, \mathrm{ad}^0(f) \otimes E/\mathcal{O})}{\mathrm{Sel}^{\Sigma^-}_{\Sigma^+} (\mathbb{Q}, \mathrm{ad}^0(f) \otimes E/\mathcal{O})} \simeq \prod_{\ell \in \Sigma^-} \dfrac{\mathrm{H}^1}{\mathrm{H}^1_{\mathrm{new}}} \simeq \prod_{\ell \in \Sigma^-} \mathrm{H}^1 $$
where the last isomorphism follows from Proposition \ref{prop:local_computation}.(1).
Combining all the results, we have
\begin{align*}
& \ \mathrm{ord}_{\lambda} \eta_{f^{\Sigma}}(N_{\overline{\rho},\Sigma} /r) - \mathrm{ord}_{\lambda} \eta_{f^{\Sigma^+}}(N^+_{\overline{\rho},\Sigma^+}/r \cdot N^-) \\
= & \  \mathrm{length}_{\mathcal{O}} \mathrm{Sel}_{\Sigma} (\mathbb{Q}, \mathrm{ad}^0(f) \otimes E/\mathcal{O})
 - \mathrm{length}_{\mathcal{O}} \mathrm{Sel}^{\Sigma^-}_{\Sigma^+} (\mathbb{Q}, \mathrm{ad}^0(f) \otimes E/\mathcal{O}) \\
= & \ \sum_{\ell \in \Sigma^-} \mathrm{length}_{\mathcal{O}} \mathrm{H}^1 (\mathbb{Q}_\ell, \mathrm{ad}^0(f) \otimes E/\mathcal{O}) 
\end{align*}
and Theorem \ref{thm:key_thm} follows from Proposition \ref{prop:local_computation}.(2) and Corollary \ref{cor:local_computation}.
\section{Congruence ideals and adjoint $L$-values} \label{sec:adjoint_L-values}
The goal of this section is to prove Proposition \ref{prop:second_correction} (the $L$-value computation).  
\subsection{Adjoint $L$-functions}
Let $f \in S_k(\Gamma_0(N))$ be a newform.
Then the $L$-function of $f$ is defined by
$$L(f,s) := \prod_{\ell \nmid N} (1 - a_\ell(f) p^{-s} + p^{k-1-2s})^{-1} \cdot \prod_{\ell \mid N} (1 - a_\ell(f) p^{-s})^{-1} $$
and note that the Euler factor $L_\ell (f,s)$ at $\ell \nmid N$ is
$$(1 - a_\ell(f) p^{-s} + p^{k-1-2s})^{-1} = (1 - \alpha_\ell(f) p^{-s})^{-1} \cdot (1 - \beta_\ell(f) p^{-s})^{-1} .$$
For a prime $\ell \neq p$, we recall
\[
\xymatrix{
c_0(\ell) := \mathrm{ord}_\ell(N), & d_0(\ell) := \mathrm{dim}_{E} \ \mathrm{H}^1( I_\ell, V_f).
}
\]
Then we have
$$d_0(\ell) = \left\lbrace \begin{array}{ll} 
2 & \textrm{if } \ \ell \nmid N , \\
1 & \textrm{if } \ \ell \mid N \textrm{ and } a_\ell(f) \neq 0 , \\
0 & \textrm{if } \ \ell \mid N \textrm{ and } a_\ell(f) = 0 .
\end{array} \right.$$
If $d_0(\ell) = 2$, then 
$$L_\ell(\mathrm{ad}^0(f), s) := \left( 1 - \frac{\alpha_\ell(f)}{\beta_\ell(f)} p^{-s} \right)^{-1} \cdot
(1-\ell^{-s})^{-1} \cdot
\left( 1 - \frac{\beta_\ell(f)}{\alpha_\ell(f)} p^{-s} \right)^{-1} .$$
If $d_0(\ell) = 1$, then
$$L_\ell(\mathrm{ad}^0(f), s) := \left\lbrace \begin{array}{ll} 
(1-\ell^{-1-s})^{-1} & \textrm{if } \ \pi_\ell(f) \textrm{ is special}, \\
(1-\ell^{-s})^{-1} & \textrm{if } \ \pi_\ell(f) \textrm{ is principal series}
\end{array} \right.$$
where $\pi_\ell(f)$ is the local component of the automorphic representation attached to $f$ at $\ell$.
We omit the definition of $L_\ell(\mathrm{ad}^0(f), s)$ when $d_0(\ell) = 0$, but note that it may not be 1 in general (``the exceptional set for $f$" cf.~\cite[$\S$1.7.2 and $\S$1.8.1]{diamond-flach-guo}).
For the exact calculation of congruence ideals, we modify Euler factors as follows:
$$L^{\mathrm{nv}}_\ell(\mathrm{ad}^0(f), s) := \left\lbrace \begin{array}{ll} 
L_\ell(\mathrm{ad}^0(f), s) & \textrm{if } \ d_0(\ell) >0, \\
1 & \textrm{if } \  d_0(\ell) =0.
\end{array} \right.$$
\subsection{Cohomology congruence ideals and their variants} \label{subsec:cohomology_congruence_ideals}
We quickly review cohomology congruence ideals following \cite[$\S$4.4]{ddt}, \cite{diamond-euler}, and \cite[$\S$1.7.3]{diamond-flach-guo}.

Let $k$ be an integer with $2 \leq k \leq p-1$.
Let $\overline{\rho}$ be the fixed irreducible residual representation and $f \in S_k(\Gamma_0(N))$ an eigenform with $\overline{\rho}_f \simeq \overline{\rho}$.
Let $\mathbb{T}(N)_{\mathfrak{m}_f}$ be the full Hecke algebra localized at the non-Eisenstein maximal ideal $\mathfrak{m}_f$.
As in $\S$\ref{subsubsec:level-str-Selmer}, it follows from \cite[Lemma 5.4.(iii)]{dimitrov-ihara} that $\mathbb{T}(N)_{\mathfrak{m}_f}$ is isomorphic to the image of $\mathbb{T}^S$  in $\mathrm{End}_{\mathcal{O}} ( \mathrm{H}^1_{\mathrm{\acute{e}t},c}(Y_1(N)_{\overline{\mathbb{Q}}}, \mathcal{F}^k_p)[\langle-\rangle - \mathbf{1}]_{\mathfrak{m}_f} )$.

We write $\mathcal{M}_{N} = \mathrm{H}^1_{\mathrm{\acute{e}t},c}(Y_1(N)_{\overline{\mathbb{Q}}}, \mathcal{F}^k_p)_{\mathfrak{m}_f}$.
We do not care too much about the compactly supported or parabolic cohomologies since $\mathfrak{m}_f$ is non-Eisenstein.

The cup product and Poincar\`{e} duality on Betti cohomology and the comparison between Betti and \'{e}tale cohomologies imply that there exists an alternating and non-degenerate pairing
\[
\xymatrix@R=0em{
\mathcal{M}_{N} \times \mathcal{M}_{N} \ar[r] & E \\
(x, y) \ar@{|->}[r] & x \cup w \cdot y
}
\]
where $w$ is the Atkin--Lehner involution, and
we denote the image of the pairing by 
$\mathcal{L} := \wedge^2 \mathcal{M}_{N} \subseteq E $.
We scale the pairing to have $\mathcal{L} = \mathcal{O}$ for convenience.
Then the pairing
\begin{equation} \label{eqn:perfect_pairing}
\mathcal{M}_{N} \times \mathcal{M}_{N} \to  \mathcal{O} 
\end{equation}
is perfect, and we define the \textbf{cohomology congruence ideal of $f$} by the image of the pairing
$$\delta_f(N) := \wedge^2 \mathcal{M}_{N}[\wp_f] \subseteq \mathcal{O} .$$
In order to identify congruence ideals and cohomology congruence ideals, we need the following freeness result.
\begin{prop} \label{prop:gorenstein}
Suppose that $\mathfrak{m}_f$ is non-Eisenstein.
Then
\begin{enumerate}
\item 
$\mathbb{T}(N)_{\mathfrak{m}_f}$ is Gorenstein, and
\item 
$\mathcal{M}_{N}$ is free over $\mathbb{T}(N)_{\mathfrak{m}_f}$ of rank two.
\end{enumerate}
\end{prop}
\begin{proof}
See \cite[Proposition 5.5]{dimitrov-ihara} (cf. \cite[Theorem 2.1]{faltings-jordan}, \cite[Theorem 1.13]{vatsal-cong}).
\end{proof}
\begin{rem}
The freeness obtained from Diamond's Taylor--Wiles system argument requires that the level should be of the form $N_{\overline{\rho},\Sigma}$ for some $\Sigma$ \cite[Theorem 3.4]{diamond_multiplicity_one}.
\end{rem}

Let $\mathfrak{a}^{N^-} = \mathrm{ker}( \mathbb{T}(N)_{\mathfrak{m}_f} \to \mathbb{T}(N)^{N^-}_{\mathfrak{m}_f} )$ and we define 
$\mathcal{M}^{N^-}_{N} :=
\mathcal{M}_{N}[\mathfrak{a}^{N^-}] $. 
\begin{lem} \label{lem:torsion_free}
Suppose that $\mathfrak{m}_f$ is non-Eisenstein.
The quotient $\mathcal{M}_{N} / \left( \mathcal{M}_{N}[\mathfrak{a}^{N^-}] \right)$ is torsion-free.
\end{lem}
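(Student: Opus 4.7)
The plan is to reduce the statement to a purely ring-theoretic fact about $\mathbb{T}(N)_{\mathfrak{m}}$ using the freeness result from Proposition \ref{prop:gorenstein}, and then to verify that ring-theoretic fact by a short annihilator manipulation. Throughout, ``torsion-free'' should mean $\mathcal{O}$-torsion-free, which is the natural notion given that everything in sight is $\mathcal{O}$-flat.

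First I would note that, since $\mathcal{M}_{N, \mathfrak{m}}$ is free of rank two over $\mathbb{T}(N)_{\mathfrak{m}}$ by Proposition \ref{prop:gorenstein}.(3), for any ideal $\mathfrak{a}$ of $\mathbb{T}(N)_{\mathfrak{m}}$ we have
$$\mathcal{M}_{N,\mathfrak{m}}[\mathfrak{a}] \simeq \mathrm{Ann}_{\mathbb{T}(N)_{\mathfrak{m}}}(\mathfrak{a})^{\oplus 2}, \qquad \mathcal{M}_{N,\mathfrak{m}}/\mathcal{M}_{N,\mathfrak{m}}[\mathfrak{a}] \simeq \left( \mathbb{T}(N)_{\mathfrak{m}}/\mathrm{Ann}_{\mathbb{T}(N)_{\mathfrak{m}}}(\mathfrak{a}) \right)^{\oplus 2}.$$
Applying this with $\mathfrak{a} = \mathfrak{a}^{N^-}$, the lemma reduces to showing that $\mathbb{T}(N)_{\mathfrak{m}}/\mathrm{Ann}_{\mathbb{T}(N)_{\mathfrak{m}}}(\mathfrak{a}^{N^-})$ is $\mathcal{O}$-torsion-free.

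Next I would invoke the standard fact that $\mathbb{T}(N)_{\mathfrak{m}}$ is finite flat (hence $\mathcal{O}$-torsion-free) over $\mathcal{O}$: it acts faithfully on $\mathcal{M}_{N,\mathfrak{m}}$, which is a free $\mathcal{O}$-module by Proposition \ref{prop:gorenstein} combined with Nakayama, so $\mathbb{T}(N)_{\mathfrak{m}} \hookrightarrow \mathrm{End}_{\mathcal{O}}(\mathcal{M}_{N,\mathfrak{m}})$ is $\mathcal{O}$-flat. With this in hand, the claim follows from the following general observation: if $R$ is an $\mathcal{O}$-torsion-free ring and $\mathfrak{a} \subseteq R$ is any ideal, then $R/\mathrm{Ann}_R(\mathfrak{a})$ is $\mathcal{O}$-torsion-free. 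Indeed, let $\lambda$ be a uniformizer of $\mathcal{O}$ and suppose $\lambda x \in \mathrm{Ann}_R(\mathfrak{a})$ for some $x \in R$. Then for every $y \in \mathfrak{a}$ we have $\lambda (xy) = 0$ in $R$, so $xy = 0$ by the $\mathcal{O}$-torsion-freeness of $R$. This means $x \in \mathrm{Ann}_R(\mathfrak{a})$, proving the claim.

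There is no serious obstacle: the whole argument is a short commutative-algebra unwrapping, with all the genuine content concentrated in Proposition \ref{prop:gorenstein} (the Faltings--Jordan / Tilouine freeness), which is already available. If one prefers a more structural viewpoint, one may alternatively embed $\mathbb{T}(N)_{\mathfrak{m}} \hookrightarrow \prod_f E_f$ via the idempotent decomposition of the reduced finite flat $\mathcal{O}$-algebra $\mathbb{T}(N)_{\mathfrak{m}}$, identify $\mathrm{Ann}_{\mathbb{T}(N)_{\mathfrak{m}}}(\mathfrak{a}^{N^-})$ with $\mathbb{T}(N)_{\mathfrak{m}} \cap \prod_{f\ N^-\textrm{-new}} E_f$, and observe that the quotient embeds in $\prod_{f\ \textrm{not}\ N^-\textrm{-new}} E_f$, which is a product of characteristic-zero fields. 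Either route yields the lemma.
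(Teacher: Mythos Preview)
Your argument is correct and follows the same overall shape as the paper's: reduce via the freeness of $\mathcal{M}_{N,\mathfrak{m}}$ over $\mathbb{T}(N)_{\mathfrak{m}}$ to a statement about $\mathbb{T}(N)_{\mathfrak{m}}/\mathbb{T}(N)_{\mathfrak{m}}[\mathfrak{a}^{N^-}]$, and then conclude by the elementary observation that divisibility by $\lambda$ can be cancelled inside a torsion-free ring. The one genuine difference is that the paper inserts an extra step, using the Gorenstein property of $\mathbb{T}(N)_{\mathfrak{m}}$ (Proposition~\ref{prop:gorenstein}.(2)) to pass to $\mathrm{Hom}_{\mathcal{O}}(\mathbb{T}(N)_{\mathfrak{m}},\mathcal{O})$ and phrase the final torsion argument in terms of maps $\phi:\mathbb{T}(N)_{\mathfrak{m}}\to\mathcal{O}$; your direct annihilator computation in $\mathbb{T}(N)_{\mathfrak{m}}$ bypasses this and shows that only the $\mathcal{O}$-flatness of $\mathbb{T}(N)_{\mathfrak{m}}$ (not Gorensteinness) is really needed. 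Your alternative route via the embedding into $\prod_f E_f$ is also fine and amounts to the same thing.
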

\begin{proof}
It is straightforward from Proposition \ref{prop:gorenstein} and the torsion-freeness of $\mathcal{O}$.
\end{proof}


\begin{cor} \label{cor:N^--freeness}
Suppose that $\mathfrak{m}_f$ is non-Eisenstein and $f$ is new at all primes dividing $N^-$. 
Then $\mathcal{M}^{N^-}_{N}$ is free over $\mathbb{T}(N)^{N^-}_{\mathfrak{m}_f}$ of rank two and $\mathbb{T}(N)^{N^-}_{\mathfrak{m}_f}$ is Gorenstein.
\end{cor}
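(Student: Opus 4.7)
The plan is to combine the freeness and Gorenstein statements of Proposition \ref{prop:gorenstein} with Lemma \ref{lem:torsion_free} to reduce the two conclusions of the corollary to the single Gorenstein property of the $N^-$-new quotient Hecke algebra, and then to establish that property by a Nakayama-type argument tied to multiplicity-one. Write $T = \mathbb{T}(N)_\mathfrak{m}$, $T' = \mathbb{T}(N)^{N^-}_\mathfrak{m}$, $\mathfrak{a} = \ker(T \twoheadrightarrow T')$, and let $\mathfrak{m}' \subseteq T'$ denote the maximal ideal.

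First, by Proposition \ref{prop:gorenstein}.(3), $\mathcal{M}_{N,\mathfrak{m}} \simeq T^{\oplus 2}$ as $T$-modules, hence $\mathcal{M}^{N^-}_{N,\mathfrak{m}} = \mathcal{M}_{N,\mathfrak{m}}[\mathfrak{a}] \simeq T[\mathfrak{a}]^{\oplus 2}$. By Proposition \ref{prop:gorenstein}.(2) there is an isomorphism $T \simeq T^\vee := \mathrm{Hom}_{\mathcal{O}}(T,\mathcal{O})$ of $T$-modules, under which the submodule $T[\mathfrak{a}] \subseteq T$ corresponds to $T^\vee[\mathfrak{a}] = \mathrm{Hom}_{\mathcal{O}}(T/\mathfrak{a}, \mathcal{O}) = (T')^\vee$. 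Hence $\mathcal{M}^{N^-}_{N,\mathfrak{m}} \simeq ((T')^\vee)^{\oplus 2}$ as $T'$-modules, and both conclusions of the corollary become equivalent to the single statement $(T')^\vee \simeq T'$ as $T'$-modules, i.e., to Gorenstein-ness of $T'$.

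Next I would feed in Lemma \ref{lem:torsion_free}: the $\mathcal{O}$-torsion-freeness of $T/T[\mathfrak{a}]$ says $T[\mathfrak{a}]$ is $\mathcal{O}$-saturated in $T$, and combined with the reducedness of $T$ (inherited from $T \hookrightarrow \prod_{f} \mathcal{O}$ via Theorem \ref{thm:diamond-flach-guo-dimitrov}) forces $\mathrm{Ann}_T(T[\mathfrak{a}]) = \mathfrak{a}$: after $\otimes_{\mathcal{O}} E$ the two ideals agree inside the product of fields $T \otimes_{\mathcal{O}} E$, and their quotient is an $\mathcal{O}$-torsion submodule of the $\mathcal{O}$-flat ring $T'$, hence zero. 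This yields faithfulness of the $T'$-action on $(T')^\vee \simeq T[\mathfrak{a}]$.

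Finally, to upgrade faithfulness to Gorenstein-ness of $T'$, I would apply Nakayama and reduce to showing $\dim_{\mathbb{F}}(T')^\vee/\mathfrak{m}'(T')^\vee = 1$, equivalently (via the identification above) $\dim_{\mathbb{F}}\mathcal{M}^{N^-}_{N,\mathfrak{m}}/\mathfrak{m}'\mathcal{M}^{N^-}_{N,\mathfrak{m}} = 2$. This I would derive by tensoring the exact sequence $0 \to \mathcal{M}^{N^-}_{N,\mathfrak{m}} \to \mathcal{M}_{N,\mathfrak{m}} \to \mathcal{M}_{N,\mathfrak{m}}/\mathcal{M}^{N^-}_{N,\mathfrak{m}} \to 0$ with $\mathbb{F}$ over $T$ and inspecting the resulting $\mathrm{Tor}$-sequence, exploiting the two-dimensionality of $\mathcal{M}_{N,\mathfrak{m}}/\mathfrak{m}\mathcal{M}_{N,\mathfrak{m}}$ coming from Proposition \ref{prop:gorenstein}, the non-vanishing hypothesis $S_k(\Gamma_0(N))^{N^-}_\mathfrak{m} \neq 0$, and the $\mathcal{O}$-saturation from Lemma \ref{lem:torsion_free}. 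The main obstacle is precisely this last step: Gorenstein-ness is not preserved by arbitrary ring quotients, so the passage from $T$ to $T'$ cannot rely on general commutative-algebra facts and requires a delicate mod-$\mathfrak{m}$ bookkeeping argument that simultaneously invokes saturation, reducedness, and multiplicity-one.
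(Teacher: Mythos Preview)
Your reduction to Gorenstein-ness of $T'$ via $\mathcal{M}^{N^-}_{N,\mathfrak{m}} \simeq ((T')^\vee)^{\oplus 2}$ is correct, and so is the faithfulness argument. The gap is exactly where you flag it: the $\mathrm{Tor}$-sequence does not control $\dim_{\mathbb{F}}\mathcal{M}^{N^-}_{N,\mathfrak{m}}/\mathfrak{m}'$. Tensoring the inclusion $\mathcal{M}^{N^-}_{N,\mathfrak{m}} \hookrightarrow \mathcal{M}_{N,\mathfrak{m}}$ with $\mathbb{F}$ over $T$ yields a map $\mathcal{M}^{N^-}_{N,\mathfrak{m}}/\mathfrak{m} \to \mathcal{M}_{N,\mathfrak{m}}/\mathfrak{m} \simeq \mathbb{F}^2$ that is in general \emph{not} injective, and none of your inputs (saturation, reducedness, non-vanishing) makes $\mathrm{Tor}_1^T(\mathcal{M}_{N,\mathfrak{m}}/\mathcal{M}^{N^-}_{N,\mathfrak{m}},\mathbb{F})$ small enough to bound the source from above.

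The paper sidesteps this by working on the dual side: instead of the top $/\mathfrak{m}$ it computes the socle $(\mathcal{M}^{N^-}_{N,\mathfrak{m}}\otimes_{\mathcal{O}}\mathbb{F})[\mathfrak{m}]$. Here Lemma~\ref{lem:torsion_free} (the very $\mathcal{O}$-saturation you invoked) gives an honest injection $\mathcal{M}^{N^-}_{N,\mathfrak{m}}\otimes_{\mathcal{O}}\mathbb{F} \hookrightarrow \mathcal{M}_{N,\mathfrak{m}}\otimes_{\mathcal{O}}\mathbb{F}$, and taking $[\mathfrak{m}]$ is left-exact, so Proposition~\ref{prop:gorenstein}(1) supplies the upper bound
\[
(\mathcal{M}^{N^-}_{N,\mathfrak{m}}\otimes\mathbb{F})[\mathfrak{m}] \ \subseteq\ (\mathcal{M}_{N,\mathfrak{m}}\otimes\mathbb{F})[\mathfrak{m}] \ \simeq\ \mathbb{F}^{\oplus 2}.
\]
The lower bound uses the non-vanishing hypothesis together with a parity constraint that your $/\mathfrak{m}$ approach never accesses: $\mathcal{M}^{N^-}_{N,\mathfrak{m}}$ carries a Galois action and $\overline{\rho}$ is absolutely irreducible, so this socle is a sum of copies of $\overline{\rho}$ and hence even-dimensional. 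Thus the socle has dimension exactly~$2$; combined with the obvious equality of $E$-ranks this gives freeness of rank two, and Gorenstein-ness follows. The moral is that for a saturated submodule the socle inherits an upper bound from the ambient module while the top does not, and that is precisely what makes the bookkeeping close.
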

\begin{proof}
By Lemma \ref{lem:torsion_free}, we have $$\mathcal{M}^{N^-}_{N} \otimes_{\mathcal{O}} \mathbb{F} \subseteq \mathcal{M}_{N} \otimes_{\mathcal{O}} \mathbb{F} .$$
Since $S_k(\Gamma_0(N))^{N^-}_{\mathfrak{m}_f} \neq 0$, we have
$$
0 \neq \left( \mathcal{M}^{N^-}_{N} \otimes_{\mathcal{O}} \mathbb{F} \right) [\mathfrak{m}_f]  
\subseteq 
\left( \mathcal{M}_{N} \otimes_{\mathcal{O}} \mathbb{F} \right) [\mathfrak{m}_f] 
\simeq \mathbb{F}^{\oplus 2} 
$$
and all are even dimensional. Thus, we have
$\left( \mathcal{M}^{N^-}_{N} \otimes_{\mathcal{O}} \mathbb{F} \right) [\mathfrak{m}_f]
\simeq \mathbb{F}^{\oplus 2} $.
Note that
$\mathcal{M}^{N^-}_{N}$ is free over $\mathcal{O}$ and
 $\mathrm{dim}_{E} \left( \mathcal{M}^{N^-}_{N} \otimes_{\mathcal{O}} E \right)
= 2 \cdot \mathrm{dim}_{E} \left( \mathbb{T}(N)^{N^-}_{\mathfrak{m}_f} \otimes_{\mathcal{O}} E \right)$.
Thus, $\mathcal{M}^{N^-}_{N}$ is free of rank two over $\mathbb{T}(N)^{N^-}_{\mathfrak{m}_f}$.
The Gorenstein property immediately follows from the freeness.
\end{proof}
\begin{rem}
Since we work in the subspace of classical modular forms, we do not impose any condition on $N^-$ here.
If we use modular forms on Shimura curves or Hida varieties attached to the quaternion algebra of discriminant $N^-$ via the Jacquet--Langlands correspondence, then we should assume  $\ell \not\equiv \pm 1 \pmod{p}$ for all residually unramified prime $\ell$ dividing $N^-$ at least (e.g. Assumption \ref{assu:condition_CR+}).
\end{rem}
From the perfect pairing (\ref{eqn:perfect_pairing}), we have a perfect pairing
$$\mathcal{M}_{N}[\mathfrak{a}^{N^-}] \times \left( \mathcal{M}_{N}  \otimes_{\mathbb{T}(N)_{\mathfrak{m}_f} } \mathbb{T}(N)^{N^-}_{\mathfrak{m}_f} \right) \to \mathcal{O} .$$
By Corollary \ref{cor:N^--freeness}, the perfect pairing can be written as
\begin{equation} \label{eqn:perfect_pairing_N^-}
\mathcal{M}^{N^-}_{N} \times \mathcal{M}^{N^-}_{N} \to \mathcal{L}^{N^-}
\end{equation}
where 
$\mathcal{L}^{N^-} := \wedge^2 \mathcal{M}^{N^-}_{N} = \lambda^{t(N^+,N^-)} \mathcal{O}$ for some $t(N^+,N^-)$ (comparing with $\mathcal{L} = \mathcal{O}$).
Then we define the \textbf{$N^-$-new cohomology congruence ideal of $f$} by
$$\delta_f(N^+, N^-) := \lambda^{-t(N^+,N^-)} \cdot \delta_f(N) .$$
Consider the map
$$\gamma: \mathcal{M}_{N} \to \mathcal{M}_{N^{\Sigma}_f}$$
defined in \cite[$\S$1.7.3]{diamond-flach-guo}. Under the comparison between de Rham and \'{e}tale cohomologies, this map corresponds to the map $f \mapsto f^{\Sigma}$ in the de Rham side.
Then we define the \textbf{cohomology congruence ideal of $f^{\Sigma}$} by
$$\delta_{f^{\Sigma}} (N^\Sigma_f) := \wedge^2 \mathcal{M}_{N^{\Sigma}_f}[\wp_{f^\Sigma}] \subseteq \mathcal{L}_\Sigma := \wedge^2 \mathcal{M}_{N^{\Sigma}_f} = \mathcal{O}$$
where the last identification is the normalization of $\mathcal{L}_\Sigma$ as before.
\begin{prop}
Suppose that $\overline{\rho}$ is irreducible. Then
$$\delta_{f^\Sigma} (N^\Sigma_f) = \delta_{f} (N) \cdot \prod_{\ell \in \Sigma} L^{\mathrm{nv}}_\ell(\mathrm{ad}^0(f), 1)^{-1} .$$
\end{prop}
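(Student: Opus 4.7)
The plan is to reduce the statement to a one-prime-at-a-time computation. Following the construction in \cite[\S1.7.3]{diamond-flach-guo}, the morphism $\gamma$ factors as a composition of degeneracy maps $\gamma_\ell$ indexed by $\ell \in \Sigma$, each raising the level from some intermediate $N^{\Sigma'}$ to $N^{\Sigma' \cup \{\ell\}}$. Since the formation of $\delta$ as the image of $\wedge^2$ is compatible with such a factorisation, it suffices to prove the single-prime identity
\begin{equation*}
\delta_{f^{\Sigma' \cup \{\ell\}}}(N^{\Sigma' \cup \{\ell\}}) = \delta_{f^{\Sigma'}}(N^{\Sigma'}) \cdot L^{\mathrm{nv}}_\ell(\mathrm{ad}^0(f), 1)^{-1}
\end{equation*}
for each $\ell \in \Sigma$ and each $\Sigma' \subseteq \Sigma \setminus \{\ell\}$, and then iterate over $\Sigma$.

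For the single-prime step I would proceed by case analysis on $d_0(\ell) \in \{0, 1, 2\}$. The case $d_0(\ell) = 0$ is trivial: the relevant degeneracy map is an isomorphism on the $\wp_{f^{\Sigma'}}$-component, and the definition of $L^{\mathrm{nv}}_\ell$ sets the correction factor to $1$, so both sides agree. In the cases $d_0(\ell) \in \{1, 2\}$, Proposition \ref{prop:gorenstein} realises both $\mathcal{M}_{N^{\Sigma'}, \mathfrak{m}}[\wp_{f^{\Sigma'}}]$ and $\mathcal{M}_{N^{\Sigma' \cup \{\ell\}}, \mathfrak{m}}[\wp_{f^{\Sigma' \cup \{\ell\}}}]$ as free $\mathcal{O}$-modules of rank two, and the induced map $\gamma_\ell$ between them is injective. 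The ratio of the two wedge products is then the determinant of $\gamma_\ell$ with respect to appropriate bases, which can be computed by writing $\gamma_\ell$ in terms of the standard degeneracy maps and using the Hecke eigenvalues $a_\ell(f)$ (together with the local roots $\alpha_\ell(f), \beta_\ell(f)$ when $\ell \nmid N$). A direct matrix calculation identifies this determinant with $L_\ell(\mathrm{ad}^0(f), 1)^{-1}$ in both subcases, which equals $L^{\mathrm{nv}}_\ell(\mathrm{ad}^0(f), 1)^{-1}$ since $d_0(\ell) > 0$.

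The principal obstacle is to verify the compatibility between the pairing (\ref{eqn:perfect_pairing}) at level $N^{\Sigma'}$ and its analogue at level $N^{\Sigma' \cup \{\ell\}}$ with respect to both the degeneracy maps constituting $\gamma_\ell$ and the Atkin--Lehner involution $w$ used to define the pairing. A naive comparison would be off by a twist coming from the action of $w_\ell$ on the level-raising component, and it is precisely the tracking of this twist that makes the final ratio line up with $L^{\mathrm{nv}}_\ell(\mathrm{ad}^0(f), 1)^{-1}$ rather than some related but incorrect normalisation. This compatibility is essentially the content of Hida's original computation of the adjoint $L$-value via the Petersson inner product, refined to higher weight in \cite{diamond-flach-guo}; invoking that analysis will complete the proof. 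The irreducibility of $\overline{\rho}$ enters only through Proposition \ref{prop:gorenstein} to secure the rank-two freeness required to interpret the wedge products as $\mathcal{O}$-ideals.
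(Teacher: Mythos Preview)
Your proposal is correct and follows essentially the same approach as the paper: the paper's proof is simply a citation to \cite[Proposition 1.4.(c)]{diamond-flach-guo}, and your sketch is a faithful outline of the argument carried out there (factoring $\gamma$ through single-prime degeneracy maps, computing the resulting $2\times 2$ determinant on the $\wp_f$-eigenspace, and matching it against the local Euler factor via the Hida/Atkin--Lehner compatibility). Nothing further is needed.
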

\begin{proof}
See \cite[Proposition 1.4.(c)]{diamond-flach-guo}.
\end{proof}
Decompose $N = N^+ \cdot N^-$ and suppose that $\Sigma^+ = \Sigma$ does not contain any divisor of $N^-$.
Consider the restriction of $\gamma$ to the $N^-$-new parts
$$\gamma^{N^-} : \mathcal{M}^{N^-}_{N} \to \mathcal{M}^{N^-}_{N^{\Sigma}_f} .$$
Let $\mathcal{L}^{N^-}_\Sigma := \wedge^2 \mathcal{M}^{N^-}_{N^{\Sigma}_f} = \lambda^{t(N^+_{\overline{\rho},\Sigma^+}/r, N^-)} \mathcal{O} \subseteq \mathcal{L}_\Sigma =  \mathcal{O}$
for some $t(N^+_{\overline{\rho},\Sigma^+}/r, N^-)$.
In the same manner, we define the \textbf{$N^-$-new cohomology congruence ideal of $f^{\Sigma^+}$} by
$$\delta_{f^{\Sigma^+}} (N^+_{\overline{\rho},\Sigma^+}/r, N^-) := \lambda^{-t(N^+_{\overline{\rho},\Sigma^+}/r, N^-)} \cdot \delta_{f^{\Sigma^+}} (N^+_{\overline{\rho},\Sigma^+}/r \cdot N^-) .$$
\begin{cor} \label{cor:N^--new-adjoint-L-values}
Suppose that $\overline{\rho}$ is irreducible. Then
$$\delta_{f^{\Sigma^+}} (N^+_{\overline{\rho},\Sigma^+} / r, N^-) = \delta_{f} (N^+, N^-) \cdot \prod_{\ell \in \Sigma^+} L^{\mathrm{nv}}_\ell(\mathrm{ad}^0(f), 1)^{-1} .$$
Thus, $t(N^+_{\overline{\rho},\Sigma^+}/r, N^-)= t(N^+, N^-)$.
\end{cor}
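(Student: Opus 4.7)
The plan is to parallel the proof of the preceding unnumbered Proposition (i.e.\ \cite[Proposition 1.4.(c)]{diamond-flach-guo}), but systematically restricted to the $N^-$-new subspaces of the relevant \'etale cohomology groups. The enabling structural inputs are Corollary \ref{cor:N^--freeness}, which tells us that $\mathcal{M}^{N^-}_{N,\mathfrak{m}}$ and $\mathcal{M}^{N^-}_{N^{\Sigma^+},\mathfrak{m}^{\Sigma^+}}$ are free of rank two over the respective $N^-$-new Hecke algebras (and those are Gorenstein), plus the hypothesis that $\Sigma^+$ is disjoint from $\mathrm{prime}(N^-)$. The latter is crucial because it guarantees that the degeneracy maps assembling $\gamma$ act only at primes $\ell$ away from $N^-$, so each such map commutes with the two-sided projection onto the $N^-$-new subspace; consequently $\gamma^{N^-}$ really is well-defined as the restriction/corestriction of $\gamma$ to $N^-$-new parts.

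First I will factor $\gamma^{N^-}$ as a composition of elementary degeneracy maps, one prime at a time: for each $\ell \in \Sigma^+$ we raise the level at $\ell$ from $\ell^{c^+(\ell)}$ to $\ell^{c^+(\ell)+d(\ell)}$ inside $\mathcal{M}^{N^-}_{?,\mathfrak{m}}$. By Corollary \ref{cor:N^--freeness}, each intermediate module is free of rank two over its Hecke algebra and carries a perfect alternating pairing of the form \eqref{eqn:perfect_pairing_N^-} (with its own wedge-square line bundle $\mathcal{L}^{N^-}_{?}$). Next, for each $\ell \in \Sigma^+$ I will compute how the wedge-square transforms under the pair of adjoint degeneracy maps at $\ell$: this is a purely local computation at $\ell$ involving only the Atkin--Lehner theory at $\ell$ and the unramified/special/principal-series structure of $\pi_\ell(f)$, which is the content of \cite[\S1.7]{diamond-flach-guo}. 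Since $\ell \nmid N^-$, the local representation $\pi_\ell(f)$ and the local computation are \emph{identical} to the non-$N^-$-new setting, so the discriminant of the level-raising step at $\ell$ is $L^{\mathrm{nv}}_{\ell}(\mathrm{ad}^0(f),1)^{-1}$, exactly as in the preceding Proposition. Multiplying over $\ell \in \Sigma^+$ and applying this to the fixed eigenline $\mathcal{M}^{N^-}_{N,\mathfrak{m}}[\wp_f]$ yields the first displayed equality.

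For the second assertion, I will combine the first equality with the preceding Proposition applied to $\Sigma=\Sigma^+$ (using Lemma \ref{lem:removing_level_r} to harmlessly drop the auxiliary prime $r$ from the level) and with the two definitions
$$
\delta_f(N^+,N^-) = \lambda^{-t(N^+,N^-)} \cdot \delta_f(N), \qquad
\delta_{f^{\Sigma^+}}(N^+_{\overline{\rho},\Sigma^+}/r, N^-) = \lambda^{-t(N^+_{\overline{\rho},\Sigma^+}/r,N^-)} \cdot \delta_{f^{\Sigma^+}}(N^+_{\overline{\rho},\Sigma^+}/r \cdot N^-).
$$
Substituting and cancelling the Euler-factor product $\prod_{\ell \in \Sigma^+} L^{\mathrm{nv}}_{\ell}(\mathrm{ad}^0(f),1)^{-1}$ on both sides leaves the identity $\lambda^{-t(N^+_{\overline{\rho},\Sigma^+}/r,N^-)} = \lambda^{-t(N^+,N^-)}$, whence $t(N^+_{\overline{\rho},\Sigma^+}/r, N^-) = t(N^+,N^-)$.

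The main obstacle I anticipate is verifying that the local discriminant computation at each $\ell \in \Sigma^+$ is genuinely unaffected by passage to the $N^-$-new subspace. This reduces to checking that the adjoint action of the Atkin--Lehner/degeneracy operators at $\ell$ on $\mathcal{M}^{N^-}_{?,\mathfrak{m}}[\wp_f]$ agrees, after identifying both sides with $\mathcal{O}^{\oplus 2}$ via the freeness of Corollary \ref{cor:N^--freeness}, with the adjoint action on $\mathcal{M}_{?,\mathfrak{m}}[\wp_f]$; because the degeneracy maps at $\ell \in \Sigma^+$ are intertwiners for the $N^-$-new projection and the perfect pairings on both $\mathcal{M}$ and $\mathcal{M}^{N^-}$ are normalized compatibly through the Gorenstein duality in Proposition \ref{prop:gorenstein} and Corollary \ref{cor:N^--freeness}, this compatibility is automatic. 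Once that is in place, the entire Diamond--Flach--Guo calculation transfers verbatim.
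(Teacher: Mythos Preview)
Your proposal is correct and takes essentially the same approach as the paper, which simply records that the argument of \cite[Proposition 1.4.(c)]{diamond-flach-guo} transfers verbatim; you have just unpacked that assertion in detail. One minor point: your appeal to Lemma \ref{lem:removing_level_r} is unnecessary, since $N^+_{\overline{\rho},\Sigma^+}/r \cdot N^- = N^{\Sigma^+}$ already holds on the nose (the factor $r$ is removed by the division), so the preceding Proposition with $\Sigma = \Sigma^+$ applies directly.
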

\begin{proof}
The same proof as in \cite[Proposition 1.4.(c)]{diamond-flach-guo} directly works.
\end{proof}
\begin{rem} 
\begin{enumerate}
\item Indeed, $t(N^+, N^-)$ becomes $\sum_{q \vert N^-} t_f(q)$ at the end under our assumptions. Thus, $t(N^+_{\overline{\rho},\Sigma^+}/r, N^-)= t(N^+, N^-)$ can be expected easily.
\item Cohomology congruence ideals are stable under base change \cite[$\S$1.7.3]{diamond-flach-guo}.
\end{enumerate}
\end{rem}
\begin{cor} \label{cor:delta-eta-identification}
Suppose that $\mathfrak{m}_f$ is non-Eisenstein.
\begin{enumerate}
\item $\delta_f(N) = \eta_f(N) $, and
\item $\delta_f(N^+, N^-) = \eta_f(N^+, N^-) $.
\end{enumerate}
\end{cor}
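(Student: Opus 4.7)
The plan is to deduce both identifications from the structural principle that, for a Gorenstein Hecke algebra $T$ and a module $M$ that is free of rank two over $T$ and equipped with a perfect Hecke-equivariant $\mathcal{O}$-valued pairing, the image of the pairing restricted to the $f$-isotypic piece $M[\wp_f]$ coincides with the congruence ideal $\pi_f(\mathrm{Ann}_T(\wp_f))$. This is the well-known formalism explained in \cite[\S 4.4]{ddt} and \cite{diamond-flach-guo}; the freeness and Gorenstein statements needed to apply it are exactly what Proposition \ref{prop:gorenstein} and Corollary \ref{cor:N^--freeness} have supplied.

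For part (1), I would take $T = \mathbb{T}(N)_{\mathfrak{m}}$, $M = \mathcal{M}_{N,\mathfrak{m}}$, and $\wp_f = \ker \pi_f$. Proposition \ref{prop:gorenstein} guarantees that $M$ is free of rank two over the Gorenstein ring $T$, and (\ref{eqn:perfect_pairing}) provides the perfect Hecke-equivariant pairing. Gorensteinness of $T$ yields a $T$-module isomorphism $T \simeq \mathrm{Hom}_{\mathcal{O}}(T,\mathcal{O})$, under which $\mathrm{Ann}_T(\wp_f)$ is identified with $\mathrm{Hom}_{\mathcal{O}}(T/\wp_f, \mathcal{O}) \simeq \mathcal{O}$, and the resulting $\mathcal{O}$-linear embedding $\mathrm{Ann}_T(\wp_f) \hookrightarrow \mathcal{O}$ induced by $\pi_f$ has image exactly $\eta_f(N)$ by the very definition of the congruence ideal. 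Combining this with the rank-two freeness of $M$ over $T$ and the perfectness of the pairing, a direct bookkeeping (which I will not reproduce) yields $\wedge^2 M[\wp_f] = \eta_f(N) \cdot \wedge^2 M \subseteq \mathcal{L} = \mathcal{O}$, giving $\delta_f(N) = \eta_f(N)$.

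For part (2), I would run the identical argument with $T$ replaced by $\mathbb{T}(N)^{N^-}_{\mathfrak{m}}$, $M$ by $\mathcal{M}^{N^-}_{N,\mathfrak{m}}$, and the pairing (\ref{eqn:perfect_pairing}) by its $N^-$-new analogue (\ref{eqn:perfect_pairing_N^-}). Corollary \ref{cor:N^--freeness} supplies both the rank-two freeness and Gorenstein property on the $N^-$-new side, and (\ref{eqn:perfect_pairing_N^-}) is the restriction of the original pairing and so remains Hecke-equivariant and perfect. Tracing the same computation gives $\wedge^2 \mathcal{M}^{N^-}_{N,\mathfrak{m}}[\wp_f] = \eta_f(N^+, N^-) \cdot \mathcal{L}^{N^-}$ inside $\mathcal{L}^{N^-} = \lambda^{t(N^+,N^-)} \mathcal{O}$, and dividing through by $\lambda^{t(N^+,N^-)}$ (which is precisely the normalization built into the definition of $\delta_f(N^+,N^-)$) yields $\delta_f(N^+, N^-) = \eta_f(N^+, N^-)$.

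Neither step is a genuine obstacle; the only mild subtlety is verifying that the Atkin–Lehner-twisted cup product is truly $T$-equivariant (so that the Gorenstein identification $M \simeq \mathrm{Hom}_{\mathcal{O}}(M, \mathcal{O})$ applies as a $T$-module isomorphism) and that this structure descends to the $N^-$-new quotient, both of which are standard.
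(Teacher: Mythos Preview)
Your proposal is correct and follows precisely the approach the paper intends: the paper's proof is simply a citation to \cite[Lemma 4.17]{ddt} (cf.\ \cite[Lemma 3.1]{diamond-euler}), and your sketch is exactly an unpacking of that lemma applied once to $(\mathbb{T}(N)_{\mathfrak{m}}, \mathcal{M}_{N,\mathfrak{m}})$ via Proposition~\ref{prop:gorenstein} and once to $(\mathbb{T}(N)^{N^-}_{\mathfrak{m}}, \mathcal{M}^{N^-}_{N,\mathfrak{m}})$ via Corollary~\ref{cor:N^--freeness}. The normalization by $\lambda^{t(N^+,N^-)}$ you highlight at the end is indeed the only bookkeeping point beyond the cited lemma.
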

\begin{proof}
See \cite[Lemma 4.17]{ddt} (cf. \cite[Lemma 3.1]{diamond-euler}).
\end{proof}
Thus, Proposition \ref{prop:second_correction} immediately follows.

\section{Arithmetic applications} \label{sec:appendix}
In this section, we briefly describe various arithmetic applications of Theorem \ref{thm:main_thm}.

In $\S$\ref{subsec:gross_periods} and $\S$\ref{subsec:mu-invariants},
we generalize the work of Pollack--Weston \cite{pw-mu} and Chida--Hsieh \cite{chida-hsieh-p-adic-L-functions} on the comparison between Hida's canonical periods and Gross periods to higher weight modular forms (Corollary \ref{cor:gross-period-comparison}).
We also extend the $\mu$-part of the anticyclotomic main conjecture for modular forms of higher weight to Greenberg Selmer groups (Corollary \ref{cor:mu-part-imc}).

In $\S$\ref{subsec:periods-shimura-curves}, we study the integral periods arising from Shimura curves.
Under certain assumptions, we show that if the canonical periods arising from modular curves and Shimura curves differ by a $p$-adic unit, then Prasanna's conjecture holds.
See Corollary \ref{cor:prasanna-conjecture} for the  precise  statement.


\subsection{Comparison with Gross periods} \label{subsec:gross_periods}
We recall \cite[$\S$6]{chida-hsieh-p-adic-L-functions} with some modifications.
Let $f \in S_k(\Gamma_0(N))$ be a newform as in Theorem \ref{thm:main_thm}.
We keep the following assumption in this subsection.
\begin{assu} \label{assu:definite}
Assume that $N^-$ is the square-free product of an \emph{odd} number of primes.
\end{assu}
\subsubsection{Quaternionic modular forms}
Let $B$ be the definite quaternion algebra over $\mathbb{Q}$ of discriminant $N^-$ and $R$ an Eichler order of level $N^+$.
Let $f_B$ be a Jacquet--Langlands transfer of $f$, i.e. a continuous function
$$f_B : B^{\times} \backslash \widehat{B}^{\times} / \widehat{R}^{(p), \times} \to \mathrm{Sym}^{k-2}(E^2)$$
such that $f_B(a \cdot g \cdot r) =  r^{-1} \circ f_B(g)$ for $a \in B^{\times}$ and $r \in R^\times_p \simeq \mathrm{GL}_2(\mathbb{Z}_p)$ and the Hecke eigenvalues of $f$ and $f_B$ are same at all primes not dividing $N^-$.
Denote by
$$S^{N^-}_k(R, E) =\left\lbrace
f: B^{\times} \backslash \widehat{B}^{\times} / \widehat{R}^{(p), \times} \to \mathrm{Sym}^{k-2}(E^2)
: f(a \cdot g \cdot r) =  r^{-1} \circ f(g), f \textrm{ is not constant}
 \right\rbrace .$$
We recall the following simple form of the Jacquet--Langlands correspondence.
\begin{thm} \label{thm:jacquet-langlands-rational}
There exists a non-canonical isomorphism 
$$S_k(\Gamma_0(N), \mathbb{C})^{N^-} \simeq S^{N^-}_k(R, \mathbb{C}) $$
of Hecke modules with identification their Hecke algebras over $\mathbb{C}$.
\end{thm}
\begin{proof}
See \cite[Theorem 2.30 in $\S$2.3.6]{hida-hilbert} for this form of the correspondence.
Indeed, the isomorphism should be understood as the one-to-one correspondence between Hecke eigensystems.
\end{proof}
By using fixed embeddings $\iota_p$ and $\iota_\infty$, we integrally normalize each quaternionic modular form $f_B$ by its mod $p$ non-vanishing of the values of $f_B$ as in \cite[$\S$4.1]{chida-hsieh-p-adic-L-functions}.
The integral normalization of classical modular forms is given by the $q$-expansion.
Using these two integral structures,
 we are able to identify the Hecke modules
$$S_k(\Gamma_0(N), \mathcal{O})^{N^-} \simeq S^{N^-}_k(N, \mathcal{O}) $$
with identification of their Hecke algebras over $\mathcal{O}$; however, this identification itself is ad hoc. It will have a precise meaning after establishing the freeness of the quaternionic Hecke modules (Theorem \ref{thm:freeness_quaternion}).

\subsubsection{Quaternionic congruence ideals and Gross periods}
Consider the perfect pairing 
\begin{equation} \label{eqn:quaternionic_pairing}
\langle -, - \rangle_{N} : S^{N^-}_k(N, \mathcal{O}) \times S^{N^-}_k(N, \mathcal{O}) \to \mathcal{O}
\end{equation}
defined in \cite[(6.1)]{chida-hsieh-p-adic-L-functions}:
$$\langle f_B, g_B \rangle_{N} := \sum_{[b]} \langle f_B(b), g_B(bw) \rangle_k \cdot \left(\# (B^\times \cap b\widehat{R}^\times b^{-1}\widehat{\mathbb{Q}}^\times ) / \mathbb{Q}^\times \right)^{-1}$$
where
$w$ is the Atkin--Lehner operator for level $N^+$,
$[b]$ runs over $B^\times \backslash \widehat{B}^\times / \widehat{R}^\times\widehat{\mathbb{Q}}^\times$,
and $\langle - , - \rangle_k : \mathrm{Sym}^{k-2} (\mathcal{O}) \times \mathrm{Sym}^{k-2} (\mathcal{O}) \to \mathcal{O}$ is the perfect pairing defined in \cite[$\S$2.3]{chida-hsieh-p-adic-L-functions}.

Let $\xi_{f_B}(N^+, N^-)$ be the quaternionic analogue of the cohomology congruence ideal for $f_B$ using the above pairing (\ref{eqn:quaternionic_pairing}) as in \cite[$\S$2.1]{pw-mu} and \cite[(3.9) and (4.3)]{chida-hsieh-p-adic-L-functions}.

\textbf{Hida's canonical period for $f$} is defined by 
$$\Omega_f := \dfrac{(4\pi)^k \langle f, f\rangle_{\Gamma_0(N)}}{\eta_f(N)}$$
and the \textbf{Gross period for $f$} is defined by
$$\Omega^{N^-}_f := \dfrac{(4\pi)^k \langle f, f\rangle_{\Gamma_0(N)}}{\xi_{f_B}(N^+, N^-)} $$
 where $\langle -, -\rangle_{\Gamma_0(N)}$ is the Petersson inner product.
\subsubsection{The freeness of quaternionic Hecke modules and the comparison of periods}
\begin{assu}[CR$^+$] \label{assu:condition_CR+}
Suppose that $\overline{\rho}$ satisfies Assumption \ref{assu:working_assumptions}.
Let $f \in S_k(\Gamma_0(N))$ be a newform with decomposition $N = N^+ \cdot N^-$ such that
\begin{enumerate}
\item $\overline{\rho}_f \simeq \overline{\rho}$,
\item $2 \leq k < p-1$ and $p \nmid N$,
\item $p$, $N^+$, and $N^-$ are pairwisely relatively prime,
\item $N^-$ is square-free and the product of an odd number of primes,
\item If $q$ divides $N^-$ and $q \equiv \pm 1 \pmod{p}$, then $q$ divides $N(\overline{\rho})$,
\item If $q$ divides $N^+$ exactly and $q \equiv 1 \pmod{p}$, then $q$ divides $N(\overline{\rho})$, and
\item $(N(\overline{\rho}), N/N(\overline{\rho}))=1$.
\end{enumerate}
\end{assu}
\begin{rem}
Assumption \ref{assu:condition_CR+} is more strict than the assumptions in Theorem \ref{thm:main_thm}.
\end{rem}
The following theorem follows from the standard Taylor--Wiles sytem argument.
\begin{thm} \label{thm:freeness_quaternion}
Under Assumption \ref{assu:condition_CR+}, 
$S^{N^-}_k(N, \mathcal{O})_{\mathfrak{m}_f}$ is a free $\mathbb{T}(N)^{N^-}_{\mathfrak{m}_f}$-module of rank one.
\end{thm}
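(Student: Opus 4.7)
The plan is to prove the freeness via a Taylor-Wiles-Kisin patching argument in the definite quaternionic setting, along the lines of Chida-Hsieh \cite{chida-hsieh-p-adic-L-functions}. The hypotheses in Assumption \ref{assu:condition_CR+} --- the strengthened tame-level conditions (5) and (6), the coprimality (7), together with the Fontaine-Laffaille bound $2 \leq k < p-1$ --- are tailored so that the standard patching machinery goes through without modification.

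First I would introduce a quaternionic Galois deformation ring $\mathcal{R}^{N^-}_B$ parametrizing lifts of $\overline{\rho}$ which are low-weight crystalline at $p$, minimally ramified at primes $\ell \mid N^+$, and either $\ell$-new or minimally ramified at each $\ell \mid N^-$ according to whether $\overline{\rho}$ is unramified or ramified there (Definition \ref{defn:new_deformations}). Conditions (5) and (6) of Assumption \ref{assu:condition_CR+} are exactly what is needed to pin down a single well-defined local deformation condition at each prime dividing $N$; by B\"{o}ckle's presentation (Corollary \ref{cor:deformation_ring_complete_intersection}) $\mathcal{R}^{N^-}_B$ is then a complete intersection, and one obtains a surjection $\pi_B^{N^-} : \mathcal{R}^{N^-}_B \twoheadrightarrow \mathbb{T}^{N^-}(N)_{\mathfrak{m}}$ from the universal property applied to the pseudo-representation on $\mathbb{T}^{N^-}(N)_{\mathfrak{m}}$, essentially as in the proof of Theorem \ref{thm:refined_R=T}.

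Next I would run Taylor-Wiles patching. For each $n \geq 1$, select a Taylor-Wiles set $Q_n$ of primes $q \equiv 1 \pmod{p^n}$ such that $\overline{\rho}(\mathrm{Frob}_q)$ has distinct eigenvalues and such that the $Q_n$-augmented deformation ring is a quotient of a power series ring over $\mathcal{O}$ in a fixed number of variables. On the automorphic side, $M_n := S^{N^-}_k(NQ_n, \mathcal{O})_{\mathfrak{m}_n}$ acquires a natural action of $\Delta_{Q_n} := \prod_{q \in Q_n} (\mathbb{Z}/q)^{\times} \otimes \mathbb{Z}_p$; since we work with a \emph{definite} quaternion algebra, these spaces are just functions on a finite double coset, so the freeness over $\mathcal{O}[\Delta_{Q_n}]$ is immediate from the explicit description (no Ihara's lemma is required). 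Patching in the inverse limit produces $(R_\infty, M_\infty)$ with $M_\infty$ free over $S_\infty := \mathcal{O}\llbracket \Delta_\infty \rrbracket$; the equality $\dim R_\infty = \dim S_\infty$ combined with the Auslander-Buchsbaum formula (or Diamond's criterion \cite{diamond_multiplicity_one}) forces $M_\infty$ to be free of rank one over $R_\infty$. Reducing modulo the augmentation ideal of $S_\infty$ recovers simultaneously the isomorphism $\mathcal{R}^{N^-}_B \simeq \mathbb{T}^{N^-}(N)_{\mathfrak{m}}$ and the desired rank-one freeness of $S^{N^-}_k(N, \mathcal{O})_{\mathfrak{m}}$.

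The main obstacle will be confirming that Assumption \ref{assu:condition_CR+}(5), which is weaker than Chida-Hsieh's blanket ramification hypothesis at every $q \mid N^-$, still identifies the local deformation condition with the correct Jacquet-Langlands local component at primes $q \mid N^-$ where $\overline{\rho}$ is unramified, and that multiplicity one continues to hold at such primes after patching. Once this local-automorphic matching is established (which, unlike in the indefinite case, is unobstructed by Ihara's-lemma-type concerns), the remainder of the Taylor-Wiles argument proceeds verbatim.
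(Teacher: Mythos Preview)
Your proposal is correct and takes essentially the same approach as the paper: the paper's proof consists solely of a citation to \cite[Proposition 6.8]{chida-hsieh-p-adic-L-functions}, and the Taylor--Wiles patching argument you outline in the definite quaternionic setting is precisely the content of that result (with the remark following the theorem noting that the ordinary condition at $p$ may be replaced by the low-weight crystalline one, and that Assumption~\ref{assu:condition_CR+}(6) lets one take tame level $N^+$ directly). Your identification of the local-automorphic matching at residually unramified primes $q \mid N^-$ as the delicate point is apt; this is exactly why condition~(5) of Assumption~\ref{assu:condition_CR+} is imposed, ensuring the $q$-new deformation problem of \S\ref{subsubsec:new} is well-posed and matches the Jacquet--Langlands local component.
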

\begin{proof}
See \cite[Proposition 6.8]{chida-hsieh-p-adic-L-functions}.
\end{proof}
\begin{rem} 
\begin{enumerate}
\item 
In \cite[(D3) in $\S$6.2]{chida-hsieh-p-adic-L-functions}, the ordinary deformation condition at $p$ is considered only; however, the replacement of the ordinary deformation condition by the low weight crystalline deformation condition does not affect any of result in \cite[$\S$6]{chida-hsieh-p-adic-L-functions}.
It is well-known that the low weight crystalline deformation condition fits well with the Taylor--Wiles system argument \cite{diamond-flach-guo, dimitrov-ihara}.
\item Considering the local deformation condition at a prime $q$ exactly dividing $N/N(\overline{\rho})$ as in \cite[(D4) in $\S$6.2]{chida-hsieh-p-adic-L-functions}, the tame level $N^+_{\overline{\rho},\Sigma^+}$ in the Taylor--Wiles system argument can be directly chosen as $N^+$ under Assumption \ref{assu:condition_CR+}.(6).
\end{enumerate}
\end{rem}
We obtain the following comparison of two different periods.
\begin{cor} \label{cor:gross-period-comparison}
Under Assumption \ref{assu:condition_CR+}, 
the following statements are valid.
\begin{enumerate}
\item $\eta_f(N^+ ,N^-) = \xi_{f_B}(N^+, N^-) $.
\item $\mathrm{ord}_\lambda (  \Omega^{N^-}_f / \Omega_f ) = \mathrm{ord}_\lambda ( \dfrac{\eta_f(N)}{\eta_f(N^+, N^-)} ) = \sum_{q \vert N^-} t_f(q)$.
\end{enumerate}
\end{cor}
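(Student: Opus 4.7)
The plan is essentially an accounting exercise: unpack the definitions of the two periods, use the freeness of the quaternionic Hecke module to replace the quaternionic congruence ideal $\xi_{f_B}(N^+,N^-)$ by the classical new congruence ideal $\eta_f(N^+,N^-)$, and then invoke Theorem \ref{thm:main_thm} to compute the resulting ratio in terms of Tamagawa exponents.

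First I would verify that Assumption \ref{assu:condition_CR+} implies all the hypotheses of Theorem \ref{thm:main_thm}: the Taylor--Wiles condition (TW) is part of Assumption \ref{assu:working_assumptions}; the weight bound $2 \le k < p-1$ is stronger than $2 \le k \le p-1$; the coprimality of $p$, $N^+$, $N^-$ and the square-freeness of $N^-$ are explicitly assumed; and condition (5) of Assumption \ref{assu:condition_CR+} (if $q \mid N^-$ and $q \equiv \pm 1 \pmod{p}$ then $q \mid N(\overline{\rho})$) is stronger than condition (5) of Theorem \ref{thm:main_thm}, which only requires $\overline{\rho}_f$ to be ramified at such $q$. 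Hence Theorem \ref{thm:main_thm} gives
\[
\mathrm{ord}_\lambda \eta_f(N) - \mathrm{ord}_\lambda \eta_f(N^+, N^-) = \sum_{q \mid N^-} t_f(q),
\]
which proves the second equality in the statement.

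For the first equality, I would simply compute
\[
\frac{\Omega^{N^-}_f}{\Omega_f}
= \frac{(4\pi)^k \langle f,f\rangle_{\Gamma_0(N)} / \xi_{f_B}(N^+,N^-)}{(4\pi)^k \langle f,f\rangle_{\Gamma_0(N)} / \eta_f(N)}
= \frac{\eta_f(N)}{\xi_{f_B}(N^+,N^-)},
\]
and then apply the corollary immediately preceding the statement (which itself combines the freeness result Theorem \ref{thm:freeness_quaternion} with the Jacquet--Langlands identification Theorem \ref{thm:jacquet-langlands-rational} and the standard comparison \cite[Lemma 4.17]{ddt}) to rewrite $\xi_{f_B}(N^+,N^-) = \eta_f(N^+,N^-)$. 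Thus $\mathrm{ord}_\lambda(\Omega^{N^-}_f/\Omega_f) = \mathrm{ord}_\lambda(\eta_f(N)/\eta_f(N^+,N^-))$, which combined with the displayed equation above yields the corollary.

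There is no real obstacle here: all of the difficult work has been done in proving Theorem \ref{thm:main_thm} (the quantitative level lowering formula) and in establishing the freeness of the quaternionic Hecke module under Assumption \ref{assu:condition_CR+}. The only mild subtlety to check is that the strengthening of hypotheses in Assumption \ref{assu:condition_CR+} (notably condition (5), which forces $q \mid N(\overline{\rho})$ rather than merely ramification of $\overline{\rho}$) is compatible with the hypotheses of Theorem \ref{thm:main_thm}, and that the integral normalizations of $f$ (by $q$-expansion) and $f_B$ (by mod $p$ non-vanishing) used in defining $\Omega_f$ and $\Omega^{N^-}_f$ are the ones for which the preceding corollary gives the equality of congruence ideals on the nose and not merely up to a $p$-adic unit.
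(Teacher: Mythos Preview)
Your proposal is correct and follows exactly the approach the paper intends: the paper presents this corollary without proof because it is immediate from the preceding corollary ($\eta_f(N^+,N^-) = \xi_{f_B}(N^+,N^-)$ under Assumption \ref{assu:condition_CR+}) together with Theorem \ref{thm:main_thm} and the definitions of the periods. Your verification that Assumption \ref{assu:condition_CR+} implies the hypotheses of Theorem \ref{thm:main_thm} is the only thing one might wish to spell out, and you have done so correctly.
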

\begin{proof}
The first statement follows from Theorem \ref{thm:jacquet-langlands-rational} and \cite[Lemma 4.17]{ddt}.
The proof of \cite[Proposition 6.1]{chida-hsieh-p-adic-L-functions} applies to the second statement in the exactly same manner.
\end{proof}
\subsection{Anticyclotomic $\mu$-invariants of modular forms} \label{subsec:mu-invariants}
The goal of this subsection is to prove the $\mu$-part of the anticyclotomic main conjecture for modular forms of higher weight as an application of Corollary \ref{cor:gross-period-comparison} (so of Theorem \ref{thm:main_thm}). We only give a sketch of the argument here. See \cite{pw-mu, chida-hsieh-main-conj, chida-hsieh-p-adic-L-functions} for details.

We keep Assumption \ref{assu:definite} and Assumption \ref{assu:condition_CR+} in this subsection.
Let $K$ be an imaginary quadratic field such that 
$(\mathrm{disc}(K) , Np) = 1$ such that
\begin{itemize}
\item if a prime $\ell$ divides $N^-$, then $\ell$ is inert in $K$, and
\item if a prime  $\ell$ divides $N^+$, then $\ell$ splits in $K$.
\end{itemize}
\begin{assu} \label{assu:ordinary-PO}
Assume that $f$ is ordinary at $p$ and $a_p(f) \not\equiv \pm 1 \pmod{p}$.
\end{assu}
Following \cite{pw-mu, chida-hsieh-p-adic-L-functions},
we are able to define two slightly different anticyclotomic $p$-adic $L$-functions $L_p(K_\infty, f)$ and $\mathfrak{L}_p(K_\infty, f)$ of $(f,K_\infty/K)$ in $\Lambda = \mathcal{O}\llbracket \mathrm{Gal}(K_\infty/K) \rrbracket$ relative to the Gross period and the Hida's canonical period, respectively.
Then by Corollary \ref{cor:gross-period-comparison} (cf. \cite[$\S$2.2]{pw-mu}),
we have
 $$\mathfrak{L}_p(K_\infty, f) = L_p(K_\infty, f) \cdot  \dfrac{\eta_f(N)}{\xi_{f_B}(N^+, N^-)} .$$ 
 
Let $A^*_f$ be the central critical twist of $A_f$.
The minimal Selmer group 
$\mathrm{Sel}(K_\infty, A^*_f)$ is defined as the kernel of the map
$$\mathrm{H}^1(K_\infty, A^*_f)  \to \prod_{w \nmid p} \mathrm{H}^1(K_{\infty, w}, A^*_f) \times \prod_{w \mid p} \dfrac{\mathrm{H}^1(K_{\infty, w}, A^*_f)}{\mathrm{H}^1_{\mathrm{ord}}(K_{\infty, w}, A^*_f)}$$
and the Greenberg Selmer group
$\mathfrak{Sel}(K_\infty, A^*_f)$ is defined as the kernel of the map
$$\mathrm{H}^1(K_\infty, A^*_f)  \to \prod_{w \nmid p} \mathrm{H}^1(I_{\infty, w}, A^*_f) \times \prod_{w \mid p} \dfrac{\mathrm{H}^1(K_{\infty, w}, A^*_f)}{\mathrm{H}^1_{\mathrm{ord}}(K_{\infty, w}, A^*_f)}$$
where $w$ runs over all places of $K_\infty$, $I_{\infty, w}$ is the inertia subgroup at $w$ and $\mathrm{H}^1_{\mathrm{ord}}$ is the standard ordinary condition.

For $\Lambda$-module $M = \mathrm{Sel}(K_\infty, A^*_f)$ or $\mathfrak{Sel}(K_\infty, A^*_f)$, we denote by
$\mu (M)$ and $\lambda (M)$ by the $\mu$-invariant and the $\lambda$-invariant of the characteristic ideal of the Pontryagin dual of $M$, respectively.
\begin{prop}[Pollack--Weston] \label{prop:pollack-weston-comparison}
Under the assumptions in Theorem \ref{thm:main_thm}, $k=2$, and Assumption \ref{assu:ordinary-PO}, we have
\begin{align*}
\lambda (\mathrm{Sel}(K_\infty, A^*_f)) & = \lambda ( \mathfrak{Sel}(K_\infty, A^*_f) ) , \\
\mu (\mathfrak{Sel}(K_\infty, A^*_f))  & = \mu (\mathrm{Sel}(K_\infty, A^*_f))+ \sum_{q \vert N^-} t_f(q) .
\end{align*}
\end{prop}
\begin{proof}
This is \cite[Corollary 5.2]{pw-mu}.
\end{proof}
\begin{thm}[Chida--Hsieh] \label{thm:chida-hsieh-mu}
Under Assumption \ref{assu:condition_CR+} and Assumption \ref{assu:ordinary-PO}, we have 
$$\mu ( L_p(K_\infty, f) ))=  \mu ( \mathrm{Sel}(K_\infty, A^*_f) ) =0.$$
\end{thm}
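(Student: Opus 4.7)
The plan is to prove the two vanishings sequentially: first $\mu(L_p(K_\infty, f)) = 0$, then deduce $\mu(\mathrm{Sel}(K_\infty, A^*_f)) = 0$ from a one-sided divisibility in the anticyclotomic main conjecture. To establish vanishing of the analytic $\mu$-invariant, I would apply Vatsal's mod $p$ non-vanishing technique, adapted to higher weight via the Gross-period normalization. Concretely, one specializes $L_p(K_\infty, f)$ at anticyclotomic finite-order characters $\chi$ of $\Gamma = \mathrm{Gal}(K_\infty/K)$ of $p$-power conductor $p^n$, and uses the Waldspurger/Gross-type formula built into its construction to rewrite
\[
L_p(K_\infty, f)(\chi) \ = \ u_\chi \cdot \bigl( \text{local factor at } p \bigr) \cdot \bigl| P_\chi(f_B) \bigr|^2
\]
with $u_\chi \in \mathcal{O}^\times$, where $f_B$ is the integrally normalized quaternionic Jacquet-Langlands transfer of $f$ and $P_\chi(f_B) = \sum_\sigma \chi(\sigma)\, f_B(\sigma \cdot x_n)$ is the toric period attached to a CM point $x_n$ of conductor $p^n$ on the definite Shimura set. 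Assumption \ref{assu:ordinary-PO}, namely ordinarity together with $a_p(f) \not\equiv \pm 1 \pmod{p}$, forces the $p$-adic local factor to be a unit, so vanishing of $\mu(L_p)$ reduces to finding some $n$ and some $\chi$ of conductor $p^n$ with $P_\chi(f_B) \not\equiv 0 \pmod{\lambda}$.

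This mod $p$ non-vanishing is the main obstacle, and its resolution is where Assumption \ref{assu:condition_CR+} is essential. Theorem \ref{thm:freeness_quaternion} ensures freeness of $S^{N^-}_k(N, \mathcal{O})_\mathfrak{m}$ over $\mathbb{T}^{N^-}(N)_\mathfrak{m}$, hence $f_B$ admits a canonical integral normalization whose values generate $\mathcal{O}$; in particular $f_B$ takes some unit value on $B^\times \backslash \widehat{B}^\times / \widehat{R}^\times$. An Ihara-type equidistribution argument for Galois orbits of CM points of growing $p$-power conductor inside this finite double coset set then produces a $\chi$ with $P_\chi(f_B)$ a $p$-adic unit. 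Condition (5) of Assumption \ref{assu:condition_CR+} (ramification of $\overline{\rho}$ at bad primes $q \equiv \pm 1 \pmod{p}$ dividing $N^-$) enters to rule out Eisenstein-type degeneracies modulo $p$ in the mixing argument, and thus is indispensable for the Galois orbits of CM points to probe the full integral structure of $f_B$.

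For the Selmer-group side, I would invoke the one divisibility
\[
\mathrm{char}_{\Lambda}\bigl(\mathrm{Sel}(K_\infty, A^*_f)^{\vee}\bigr) \ \Big|\ L_p(K_\infty, f)
\]
in $\Lambda$, obtained for $2 \leq k < p-1$ by combining the Euler system of CM points (definite and indefinite, after level raising of Bertolini-Darmon type) with the first and second explicit reciprocity laws, extended from weight two to higher weight. From this divisibility and $\mu(L_p(K_\infty, f)) = 0$ we get $\mu(\mathrm{Sel}(K_\infty, A^*_f)) \leq 0$, and since $\mu$-invariants are non-negative, equality holds. Assembling the two steps yields $\mu(L_p(K_\infty, f)) = \mu(\mathrm{Sel}(K_\infty, A^*_f)) = 0$.
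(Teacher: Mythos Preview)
The paper does not give a self-contained proof of this theorem; it simply cites the original results of Chida--Hsieh (Theorem~C of \cite{chida-hsieh-p-adic-L-functions} for the analytic $\mu$-invariant and Corollary~1 of \cite{chida-hsieh-main-conj} for the algebraic side), together with an enhancement due to Kim--Pollack--Weston. Your sketch is a faithful outline of what those cited papers actually do: Vatsal-type equidistribution of CM points on the definite Shimura set to force mod $p$ non-vanishing of toric periods (hence $\mu(L_p)=0$), followed by the Bertolini--Darmon style Euler system argument giving the divisibility $\mathrm{char}_\Lambda(\mathrm{Sel}^\vee)\mid L_p$, from which $\mu(\mathrm{Sel})=0$ follows. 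So your approach is not different from the paper's---it is an expansion of the references the paper invokes.

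One refinement worth noting: the enhancement from \cite{kim-pollack-weston} is what allows the result to be stated under Assumption~\ref{assu:condition_CR+} rather than the slightly stronger hypotheses in the original Chida--Hsieh papers; your sketch implicitly assumes this strengthening when you appeal to freeness and the Ihara-type mixing under (CR$^+$) alone.
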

\begin{proof}
See \cite[Theorem C]{chida-hsieh-p-adic-L-functions} and \cite[Corollary 1]{chida-hsieh-main-conj} with an enhancement by \cite{kim-pollack-weston}.
\end{proof}
We extend Theorem \ref{thm:chida-hsieh-mu} to Greenberg Selmer groups and anticyclotomic $p$-adic $L$-functions  relative to Hida's canonical periods.
\begin{cor} \label{cor:mu-part-imc}
Under Assumption \ref{assu:condition_CR+} and Assumption \ref{assu:ordinary-PO}, we have 
 $$\mu ( \mathfrak{L}_p(K_\infty, f) ) = \mu ( \mathfrak{Sel}(K_\infty, A^*_f) ) = \sum_{q \vert N^-} t_f(q).$$
\end{cor}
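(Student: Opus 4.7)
The plan is simply to combine the three ingredients already laid out immediately before the statement: the Chida--Hsieh vanishing $\mu(L_p(K_\infty,f))=\mu(\mathrm{Sel}(K_\infty,A_f^*))=0$, the Pollack--Weston comparison of the Greenberg and minimal Selmer $\mu$-invariants, and Corollary \ref{cor:gross-period-comparison} (which is the specialization of Theorem \ref{thm:main_thm} to the period ratio). The two equalities asserted in the corollary will be proved separately, on the Selmer side and on the $p$-adic $L$-function side, and then matched via the common value $\sum_{q\mid N^-} t_f(q)$.

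On the Selmer side, I would invoke the Pollack--Weston identity
$$\mu(\mathfrak{Sel}(K_\infty,A_f^*)) \;=\; \mu(\mathrm{Sel}(K_\infty,A_f^*)) + \sum_{q\mid N^-} t_f(q)$$
recalled above, and then feed in $\mu(\mathrm{Sel}(K_\infty,A_f^*))=0$ from the Chida--Hsieh theorem. Both inputs apply under Assumption \ref{assu:condition_CR+} together with Assumption \ref{assu:ordinary-PO}, which are precisely the hypotheses of the corollary, so this half is immediate.

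On the analytic side, I would start from the factorization
$$\mathfrak{L}_p(K_\infty,f) \;=\; L_p(K_\infty,f)\cdot \frac{\eta_f(N)}{\xi_{f_B}(N^+,N^-)}$$
of elements of $\Lambda$. The freeness result (Theorem \ref{thm:freeness_quaternion}) and its corollary in the previous section give $\xi_{f_B}(N^+,N^-)=\eta_f(N^+,N^-)$ under Assumption \ref{assu:condition_CR+}, so the scalar correction simplifies to $\eta_f(N)/\eta_f(N^+,N^-)\in E^\times$. Taking $\mu$-invariants and using the Chida--Hsieh vanishing for $L_p(K_\infty,f)$, the $\mu$-invariant of $\mathfrak{L}_p(K_\infty,f)$ reduces to $\mathrm{ord}_\lambda$ of this scalar, which is exactly $\sum_{q\mid N^-}t_f(q)$ by Corollary \ref{cor:gross-period-comparison}.

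No genuine obstacle arises, because the whole purpose of Theorem \ref{thm:main_thm} (through Corollary \ref{cor:gross-period-comparison}) is precisely to furnish this quantitative period ratio, and the algebraic counterpart was supplied by Pollack--Weston. The only bookkeeping point I would flag is that $\eta_f(N)/\eta_f(N^+,N^-)$ lies in $\mathcal{O}\subset\Lambda$, so its $\mu$-invariant as a $\Lambda$-element coincides with its $\lambda$-adic valuation; once this is noted, both sides of the claimed identity equal $\sum_{q\mid N^-}t_f(q)$ and the corollary follows.
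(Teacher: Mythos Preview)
Your proposal is correct and follows exactly the approach the paper intends: the paper does not even write out a proof, merely noting that the corollary follows from Corollary \ref{cor:gross-period-comparison} once the Pollack--Weston comparison and the Chida--Hsieh vanishing have been recorded. You have simply made explicit the two-line computation the paper leaves to the reader.
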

\begin{proof}
It is immediate from Corollary \ref{cor:gross-period-comparison}, Proposition \ref{prop:pollack-weston-comparison}, and Theorem \ref{thm:chida-hsieh-mu}.
\end{proof}
Corollary \ref{cor:mu-part-imc} completes a higher weight generalization of the $\mu$-part of the anticyclotomic main conjecture \cite[Theorem 6.9]{pw-mu}.
It seems possible to prove the supersingular analogue of Corollary \ref{cor:mu-part-imc} when $a_p(f) = 0$ as in \cite{pw-mu}, but we omit it.

\subsection{Comparison with integral periods of Shimura curves} \label{subsec:periods-shimura-curves}
Let $f \in S_k(\Gamma_0(N))$ be a newform as in Theorem \ref{thm:main_thm}.
\begin{assu} \label{assu:indefinite}
Assume that $N^-$ is the square-free product of an \emph{even} number of primes.
\end{assu}
Let $B$ be the \emph{indefinite} quaternion algebra over $\mathbb{Q}$ of discriminant $N^-$ and $f_B$ the Jacquet--Langlands transfer of $f$.
\subsubsection{Integrality of automorphic forms and the freeness of the Hecke modules}
An integral normalization of $f_B$ is much more delicate than the definite case since the geometry of Shimura curves is substantially involved and the normalization via the $q$-expansion is not available due to the lack of cusps.
In \cite{prasanna-annals}, $f_B$ is $p$-integrally normalized by considering the minimal regular model of the corresponding Shimura curve over $\mathbb{Z}_p$. It can also be checked by considering the values of $f_B$ at CM points via \cite[Proposition 2.9]{prasanna-annals}. We assume the integrality in this subsection.
\begin{assu} \label{assu:integral-shimura}
The Jacquet--Langlands transfer $f_B$ of $f$ is integrally normalized in the sense of \cite{prasanna-annals}.
\end{assu}
Let $X^{N^-}(N^+)$ be the Shimura curve of level $N^+$ and discriminant $N^-$ over $\mathbb{Q}$ and
$$\mathcal{M}'_{N^+} := \mathrm{H}^1_{\mathrm{\acute{e}t}}(X^{N^-}(N^+)_{\overline{\mathbb{Q}}}, \mathcal{F}^k_p)_{\mathfrak{m}_{f_B}}$$ the cohomology of $X^{N^-}(N^+)$ localized at the non-Eisenstein maximal ideal $\mathfrak{m}_{f_B}$.

We make the following freeness assumption.
\begin{assu} \label{assu:freeness-shimura}
The Hecke module $\mathcal{M}'_{N^+}$ is free of rank two over $\mathbb{T}(N)^{N^-}_{\mathfrak{m}_f}$.
\end{assu}
\begin{rem}
We discuss the known freeness result of $\mathcal{M}'_{N^+}$ in $\S$\ref{subsubsec:remark-freeness-shimura-curves}.
Since it is not established in full generality, we keep it as an assumption.
\end{rem}
It is possible to pin down the canonical periods of $f_B$ via the Eichler--Shimura isomorphism for Shimura curves (e.g.~\cite[(3.6)]{saito-hilbert}) under Assumption \ref{assu:freeness-shimura}.
Using the Poincar\`{e} duality and the Betti-\'{e}tale comparison as explained in $\S$\ref{subsec:cohomology_congruence_ideals}, 
there exists a (surjective) perfect pairing
$$\langle -, - \rangle' : \mathcal{M}'_{N^+} \times \mathcal{M}'_{N^+} \to \mathcal{O} .$$
and we are able to define the following analogue of the cohomology congruence ideals for Shimura curves by
$$\xi_{f_B}(N^+, N^-) := \wedge^2 \mathcal{M}'_{N^+}[\wp_{f_B}] \subseteq \mathcal{O}.$$ 
Let $S_k(\Gamma_0(N), \mathcal{O})^{N^-}_{\mathfrak{m}_f} \subseteq S_k(\Gamma_0(N), \mathcal{O})_{\mathfrak{m}_f}$ be the $\mathcal{O}$-submodule generated by
normalized eigenforms in $S_k(\Gamma_0(N))$ which are $N^-$-new and congruent to $f$ modulo $\lambda$. 
Let $\mathcal{M}^{N^-}_{N} \subseteq \mathcal{M}_{N}$ be the Hecke-stable submodule generated by the image of the two copies of $S_k(\Gamma_0(N), \mathcal{O})^{N^-}_{\mathfrak{m}_f}$ under the integral Eichler--Shimura isomorphism (Remark \ref{rem:eichler-shimura-isom}).
Then the Jacquet--Langlands correspondence (e.g. \cite[Remark 2.2]{prasanna-arithmetic-aspects}) identifies
$\mathcal{M}'_{N^+}$ with $\mathcal{M}^{N^-}_{N}$.
Under the freeness of the (identifed) Hecke module  over $\mathbb{T}(N)^{N^-}_{\mathfrak{m}_f}$ (Assumption \ref{assu:freeness-shimura}), \cite[Lemma 4.17]{ddt} identifies 
\begin{equation}  \label{eqn:congruence-ideals-normalize-shimura-curves}
\xi_{f_B}(N^+, N^-) = \eta_{f}(N^+, N^-) .
\end{equation}
See also \cite[Remark 2.1]{prasanna-arithmetic-aspects}.
We define the \textbf{canonical period for $f_B$} by
$$\Omega^{N^-}_{f} := \dfrac{(4\pi)^k \langle f_B, f_B \rangle_{\Gamma}}{\eta_{f}(N^+, N^-)} $$
where $\langle f_B, f_B \rangle_{\Gamma}$ is the Petersson inner product on $X^{N^-}(N^+)(\mathbb{C})$.


\subsubsection{Prasanna's conjecture}
In \cite[Remark 2.2 and Example 2.3]{prasanna-arithmetic-aspects}, it is expected that
$\mathrm{ord}_\lambda (  \Omega^{N^-}_f / \Omega_f )$ is a $p$-adic unit if $\overline{\rho}$ is irreducible.
Note that it is a different phenomenon from the definite case (cf. Corollary \ref{cor:gross-period-comparison}).
In the indefinite case, it expected that the difference between congruence ideals is encoded in the ratio of Petersson inner products, not the ratio of canonical periods. See \cite[Conjecture 4.2]{prasanna-arithmetic-aspects} for the statement of the conjecture over totally real fields.

Combining all the contents in this section, the following statement immediately follows.
\begin{cor} \label{cor:prasanna-conjecture}
Let $f$ be a newform of weight $k$ and level $\Gamma_0(N)$ with decomposition $N = N^+N^-$ such that
\begin{enumerate}
\item the restriction $\overline{\rho}_f \vert_{G_{\mathbb{Q}(\sqrt{p^*})}}$ is absolutely irreducible where $p^* = (-1)^{\frac{p-1}{2}}p$,
\item $2 \leq k \leq p-1$,
\item $p$, $N^+$, and $N^-$ are pairwisely relatively prime,
\item $N^-$ is square-free, and
\item if a prime $q \equiv \pm 1 \pmod{p}$ and $q$ divides $N^-$, then $\overline{\rho}_f$ is ramified at $q$.
\end{enumerate}
We further assume the following statements:
\begin{itemize}
\item[(a)] $N^-$ is the square-free product of an \emph{even} number of primes (Assumption \ref{assu:indefinite}).
\item[(b)] The Jacquet--Langlands transfer $f_B$ of $f$ is integrally normalized in the sense of \cite{prasanna-annals} (Assumption \ref{assu:integral-shimura}).
\item[(c)] The Hecke module $\mathcal{M}'_{N^+}$ is free of rank two over $\mathbb{T}(N)^{N^-}_{\mathfrak{m}_f}$
(Assumption \ref{assu:freeness-shimura}).
\item[(d)] $\mathrm{ord}_\lambda (  \Omega^{N^-}_f / \Omega_f )$ is a $\lambda$-adic unit.
\end{itemize}
Then
$$\mathrm{ord}_\lambda (  \dfrac{\langle f, f\rangle_{\Gamma_0(N)}}{\langle f_B, f_B \rangle_{\Gamma}} ) = \sum_{q \vert N^-} t_q(f) .$$ 
\end{cor}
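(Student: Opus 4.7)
The plan is to chase through the definitions of the two canonical periods and compare them to the main theorem via the identification of congruence ideals coming from Assumption \ref{assu:freeness-shimura}. The statement is essentially a one-line algebraic manipulation once all the pieces are lined up, so the ``hard work'' is really in the ingredients that have already been established in the paper.

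First, I would write out
\begin{equation*}
\frac{\Omega^{N^-}_f}{\Omega_f} \;=\; \frac{(4\pi)^k \langle f_B, f_B\rangle_{\Gamma} / \xi_{f_B}(N^+,N^-)}{(4\pi)^k \langle f, f\rangle_{\Gamma_0(N)} / \eta_f(N)} \;=\; \frac{\langle f_B, f_B\rangle_{\Gamma}}{\langle f, f\rangle_{\Gamma_0(N)}} \cdot \frac{\eta_f(N)}{\xi_{f_B}(N^+,N^-)}
\end{equation*}
directly from the definitions of $\Omega_f$ and $\Omega^{N^-}_f$ given just before the corollary.

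Next, I would invoke the identification $\xi_{f_B}(N^+,N^-) = \eta_f(N^+,N^-)$, which is precisely the proposition recorded just before the corollary and which requires Assumption \ref{assu:freeness-shimura} (the freeness of the Hecke module $\mathcal{M}'_{N^+,\mathfrak{m}}$ over $\mathbb{T}^{N^-}(N)_\mathfrak{m}$), together with \cite[Lemma 4.17]{ddt}. Substituting and taking $\mathrm{ord}_\lambda$ gives
\begin{equation*}
\mathrm{ord}_\lambda\!\left(\frac{\Omega^{N^-}_f}{\Omega_f}\right) \;=\; \mathrm{ord}_\lambda\!\left(\frac{\langle f_B, f_B\rangle_{\Gamma}}{\langle f, f\rangle_{\Gamma_0(N)}}\right) + \mathrm{ord}_\lambda\!\left(\frac{\eta_f(N)}{\eta_f(N^+,N^-)}\right).
\end{equation*}

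Finally, Assumption (d) says the left-hand side is zero, and the main theorem (Theorem \ref{thm:main_thm}) — whose hypotheses (1)--(5) are exactly the hypotheses (1)--(5) of the corollary — gives
\begin{equation*}
\mathrm{ord}_\lambda\!\left(\frac{\eta_f(N)}{\eta_f(N^+,N^-)}\right) \;=\; \sum_{q \mid N^-} t_f(q).
\end{equation*}
Rearranging yields the claimed formula. There is no real obstacle to overcome here, since the machinery has been built up in the preceding sections; the only subtle point is making sure the integral normalizations of $f_B$ and of the pairing $\langle-,-\rangle'$ match those implicit in the definition of $\Omega^{N^-}_f$, which is precisely what Assumptions \ref{assu:integral-shimura} and \ref{assu:freeness-shimura} together with the Poincar\'{e} duality/Betti--\'{e}tale comparison paragraph are designed to guarantee.
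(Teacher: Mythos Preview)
Your proposal is correct and is exactly the argument the paper has in mind: the paper simply states that ``combining all the contents in this section, the following statement immediately follows,'' and the algebraic manipulation you wrote out is the only reasonable way to combine the definitions of $\Omega_f$, $\Omega^{N^-}_f$, the identification $\xi_{f_B}(N^+,N^-)=\eta_f(N^+,N^-)$ from Assumption \ref{assu:freeness-shimura}, Assumption (d), and Theorem \ref{thm:main_thm}. Your reading of Assumption (d) as $\mathrm{ord}_\lambda(\Omega^{N^-}_f/\Omega_f)=0$ is the intended one.
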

\begin{proof}
Since $\mathrm{ord}_\lambda (  \Omega^{N^-}_f / \Omega_f )$ is a $\lambda$-adic unit, 
$\mathrm{ord}_\lambda (  \dfrac{\langle f, f\rangle_{\Gamma_0(N)}}{\langle f_B, f_B \rangle_{\Gamma}} ) $
becomes the ratio between the congruence ideal and the $N^-$-new congruence ideal (with help of (\ref{eqn:congruence-ideals-normalize-shimura-curves})). Then the conclusion follows from Theorem \ref{thm:main_thm}.
\end{proof}

\begin{rem} 
\begin{enumerate}
\item In the case of elliptic curves, Assumption (b) on the integrality can be removed by the geometric method of Ribet--Takahashi. See \cite[$\S$2.2.1]{prasanna-annals}.
\item In the case of weight two forms, Assumption (c) follows from \cite[Corollary 8.11 and Remark 8.12]{helm-israel} under the tame level assumption (5).
In \cite{helm-israel}, although the level is assumed to be square-free, it seems easy to be removed via Ihara's lemma for Shimura curves over $\mathbb{Q}$.
\item 
In the case of elliptic curves, Assumption (d) on the ratio of the canonical periods can be removed by using Faltings' isogeny theorem, but we need it for the higher weight case. See \cite[Example 2.3]{prasanna-arithmetic-aspects}.
\end{enumerate}
\end{rem}
\subsubsection{Remarks on the freeness of the Hecke modules for Shimura curves} \label{subsubsec:remark-freeness-shimura-curves}
We briefly review the development of the freeness of the higher weight Hecke modules for Shimura curves over $\mathbb{Q}$ based on \cite{cheng-multiplicity-one-for-hilbert, cheng-fu_multiplicities}. Let $\Sigma^+$ be the primes in $\Sigma$ not dividing $N^-$ as defined in $\S$\ref{subsubsec:first-approximation}.
\begin{thm}[Cheng, Cheng--Fu]
We assume the following conditions:
\begin{itemize}
\item Assumption \ref{assu:working_assumptions}.(TW).
\item $\overline{\rho}$ occurs in $\mathcal{M}'_{N(\overline{\rho})^+}$.
\item $N(\overline{\rho})$ is square-free.
\item If $\ell$ divides $N^-$ and $\ell^2 \equiv 1 \pmod{p}$, then $\ell$ divides $N(\overline{\rho})$.
\item $\mathrm{End}_{\overline{\mathbb{F}}_p[G_{\mathbb{Q}_p}]} ( \overline{\rho}\vert_{G_{\mathbb{Q}_p}} \otimes \overline{\mathbb{F}}_p) = \overline{\mathbb{F}}_p$ .
\end{itemize}
Then
$\mathcal{M}'_{N^+_{\overline{\rho},\Sigma^+}}$ is free of rank two over $\mathbb{T}(N^+_{\overline{\rho},\Sigma^+} \cdot N^-)^{N^-}_{\mathfrak{m}_{\Sigma^+}}$.
\end{thm}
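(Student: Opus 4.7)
The plan is to adapt the Taylor--Wiles--Diamond patching argument to the quaternionic setting of Shimura curves, following the strategy of Cheng and Cheng--Ji. The argument splits naturally into two stages: first establish the rank-two freeness at the minimal level $N(\overline{\rho})^+ \cdot N^-$, and then propagate it up to the non-minimal tame level $N^+_{\overline{\rho},\Sigma^+} \cdot N^-$ by iterated level-raising via Ihara's lemma for Shimura curves over $\mathbb{Q}$.

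At the minimal level, I would run the quaternionic version of the Taylor--Wiles system to simultaneously produce an $R = T$ isomorphism for a suitable minimal deformation ring of $\overline{\rho}$ and the rank-two freeness of $\mathcal{M}'_{N(\overline{\rho})^+, \mathfrak{m}}$ over the localized quaternionic Hecke algebra $\mathbb{T}^{N^-}(N(\overline{\rho}))_{\mathfrak{m}}$. The local inputs this requires are: (i) smoothness of the local crystalline deformation ring at $p$, guaranteed by the condition $\mathrm{End}_{\overline{\mathbb{F}}_p[G_{\mathbb{Q}_p}]}(\overline{\rho}|_{G_{\mathbb{Q}_p}} \otimes \overline{\mathbb{F}}_p) = \overline{\mathbb{F}}_p$; (ii) a well-behaved local deformation ring at each $\ell \mid N^-$, which, as explained in Remark \ref{rem:local_deformation_conditions}, forces the Steinberg/minimally-ramified local condition at $\ell$ and demands that $\overline{\rho}$ be ramified at primes $\ell \mid N^-$ with $\ell^2 \equiv 1 \pmod p$ in order for the local deformation problem at $\ell$ to be rigid; (iii) the hypothesis that $\overline{\rho}$ already occurs in $\mathcal{M}'_{N(\overline{\rho})^+, \mathfrak{m}}$, which anchors the patching to the correct geometric object. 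Square-freeness of $N(\overline{\rho})$ ensures that at the remaining bad primes the local conditions reduce to the standard minimally ramified or special cases whose deformation rings are computed in $\S$\ref{subsec:local_deformation_conditions}, so the patched module is free over a formal power series ring and the Taylor--Wiles diagonalization yields the freeness at the minimal level.

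To pass from $N(\overline{\rho})^+$ to $N^+_{\overline{\rho},\Sigma^+}$, I would raise the level one prime $\ell \in \Sigma^+$ at a time. For each such $\ell$, the two degeneracy maps on the new part of the étale cohomology of the Shimura curve, combined with Ihara's lemma for $X^{N^-}(N^+)$ over $\mathbb{Q}$ (established by Diamond--Taylor; compare \cite[Conjecture 2.3]{cheng-ji_multiplicities} for the totally real case, which is still open), show that the level-raised module remains free of rank two over the level-raised localized Hecke algebra. A local analysis of $\overline{\rho}|_{G_{\mathbb{Q}_\ell}}$ at $\ell \in \Sigma^+$ determines whether the $\ell$-exponent jumps by $d(\ell) \in \{0,1\}$, matching the definition of $N^+_{\overline{\rho},\Sigma^+}$ in (\ref{eqn:level_N^+}). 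Iterating over $\Sigma^+$ yields the full statement.

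The main obstacle is the minimal-level stage. Unlike the modular-curve argument recalled in $\S$\ref{subsec:cohomology_congruence_ideals}, the Shimura curve $X^{N^-}(N^+)$ has no cusps, so the Faltings--Jordan approach via the $q$-expansion principle is unavailable and one cannot reduce rank-two freeness to a cheaper multiplicity-one assertion combined with a Gorensteinness statement obtained geometrically. The entire burden rests on the Taylor--Wiles patching, and the delicate point is the local deformation theory at primes $\ell \mid N^-$: without the hypothesis that $\overline{\rho}$ is ramified at $\ell$ when $\ell^2 \equiv 1 \pmod p$, the local deformation ring at $\ell$ acquires extra branches and the patching argument fails to output a freeness statement, reflecting exactly the obstruction flagged in Remark \ref{rem:local_deformation_conditions}. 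Controlling this is why the hypotheses on $N^-$ and on the ramification of $\overline{\rho}$ are built into the statement.
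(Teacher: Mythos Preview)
Your proposal is essentially correct and matches the paper's approach. The paper does not give an independent proof here: it simply cites \cite[Theorem 5.14]{cheng-multiplicity-one-for-hilbert} for the minimal case $\Sigma^+ = \emptyset$ and \cite[Theorem 3.11]{cheng-ji_multiplicities} (built on \cite{diamond_multiplicity_one} and Ihara's lemma for Shimura curves) for general $\Sigma^+$. Your two-stage sketch --- Taylor--Wiles--Diamond patching at the minimal quaternionic level, then iterated level-raising via Ihara's lemma for Shimura curves over $\mathbb{Q}$ --- is exactly the content of those references, so you have correctly reconstructed what the paper is pointing to.

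One minor slip: when you say the $\ell$-exponent jumps by $d(\ell) \in \{0,1\}$ for $\ell \in \Sigma^+$, note that $d(\ell) = \dim_{\mathbb{F}} \mathrm{H}^0(I_\ell, \overline{\rho})$ can equal $2$ when $\overline{\rho}$ is unramified at $\ell$ (which is the typical situation for $\ell \in \Sigma^+$); you may be conflating $d(\ell)$ with $d_0(\ell)$ from the discussion after (\ref{eqn:level_N^+}). This does not affect the structure of your argument, only the bookkeeping of the target level.
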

\begin{proof}
See \cite[Theorem 5.14]{cheng-multiplicity-one-for-hilbert} for the $N^+_{\overline{\rho},\Sigma^+} = N(\overline{\rho})^+$ case, i.e. $\Sigma^+ = \emptyset$.
For general $\Sigma^+$, see \cite[Theorem 3.11]{cheng-fu_multiplicities}, which is based on \cite{diamond_multiplicity_one} and Ihara's lemma for Shimura curves \cite{diamond-taylor-non-optimal}.
\end{proof}
\begin{rem} 
\begin{enumerate}
\item The square-freeness condition of $N(\overline{\rho})$ is imposed in \cite[Page 420]{cheng-multiplicity-one-for-hilbert}.
\item 
The condition $\mathrm{End}_{\overline{\mathbb{F}}_p[G_{\mathbb{Q}_p}]} ( \overline{\rho}\vert_{G_{\mathbb{Q}_p}} \otimes \overline{\mathbb{F}}_p) = \overline{\mathbb{F}}_p$ is imposed in \cite[(3.2)]{cheng-multiplicity-one-for-hilbert}.
It seems that it comes from \cite[(9), Page 30]{taylor-wiles-quaternionic} in order to have the representability of potentially Barsotti--Tate deformation rings (cf.~\cite{cdt}).
We expect that the replacement of this condition by the low weight crystalline condition would not cause any problem.
Indeed, this modification is already spelled out in the global deformation problem in \cite[Definition 5.12]{cheng-multiplicity-one-for-hilbert}.
\end{enumerate}
\end{rem}

%

\bibliographystyle{amsalpha}
\bibliography{kim-ota}

\end{document}